\documentclass[11pt,reqno]{amsart}
\usepackage{amsmath,amscd,amssymb,amsfonts,amsthm,courier,relsize,bm}
\usepackage{hyperref,enumerate,mathrsfs,mathtools,slashed}
\usepackage[nobysame]{amsrefs}
\usepackage{tikz-cd}
\usepackage{marginnote}
\usepackage[all,cmtip]{xy}
\usepackage[rightcaption]{sidecap}
\usepackage{graphicx}

\usepackage{xcolor}  
\hypersetup{
    colorlinks,
    linkcolor={red!50!black},
    citecolor={blue!70!black},
    urlcolor={blue!80!black}
}

\newtheorem{theorem}{Theorem}[section]

\newtheorem{proposition}[theorem]{Proposition}

\newtheorem{lemma}[theorem]{Lemma}
\theoremstyle{definition}    
\newtheorem{definition}[theorem]{Definition}
\theoremstyle{boldremark}

\newtheorem{remark}[theorem]{Remark}

\theoremstyle{boldexample}
\newtheorem{example}[theorem]{Example}

\newcommand{\pair}[2]{\langle #1, #2 \rangle}
\newcommand{\ignore}[1]{}

\newcommand{\ol}[1]{\overline{#1}}

\newcommand{\sm}[1]{\mathsmaller{#1}}

\def\d{\ensuremath{\mathrm{d}}}

\def\ad{\ensuremath{\textnormal{ad}}}
\def\g{\ensuremath{\mathfrak{g}}}

\def\t{\ensuremath{\mathfrak{t}}}

\def\h{\ensuremath{\mathfrak{h}}}

\def\A{\ensuremath{\mathcal{A}}}
\def\B{\ensuremath{\mathcal{B}}}
\def\C{\ensuremath{\mathcal{C}}}

\def\G{\ensuremath{\mathcal{G}}}

\def\L{\ensuremath{\mathcal{L}}}

\def\R{\ensuremath{\mathcal{R}}}

\def\U{\ensuremath{\mathcal{U}}}

\def\bC{\ensuremath{\mathbb{C}}}
\def\bR{\ensuremath{\mathbb{R}}}
\def\bZ{\ensuremath{\mathbb{Z}}}

\def\Hom{\ensuremath{\textnormal{Hom}}}

\def\ker{\ensuremath{\textnormal{ker}}}

\def\mon{\ensuremath{\textnormal{Mon}}}
\def\res{\ensuremath{\textnormal{res}}}

\def\tr{\ensuremath{\textnormal{tr}}}
\def\pr{\ensuremath{\textnormal{pr}}}
\def\dim{\ensuremath{\textnormal{dim}}}

\def\Cl{\ensuremath{\textnormal{Cl}}}

\def\sm{\ensuremath{\textnormal{sm}}}
\def\bk{\ensuremath{{}^{b^k}}}
\def\spinc{\ensuremath{\textnormal{Spin}}^c}

\def\Ch{\ensuremath{\textnormal{Ch}}}
\def\Td{\ensuremath{\textnormal{Td}}}

\title{$b^k$-Symplectic manifolds and $[Q,R]=0$}
\author{Ahmad Reza Haj Saeedi Sadegh }
\date{}
\email{Ahmadreza.Hajsaeedisadegh@dartmouth.edu}
\address{6188 Kemeny Hall\\ 
         Dartmouth College\\
         Hanover, NH 03755}
\subjclass[2020]{53D50, 53D17, 53D20, 58J20, 58H05, 19K56, 46L80, 81S10}
\keywords{$b^k$-symplectic manifold, log-symplectic manifold, Lie algebroid,
Lie groupoid, geometric quantization, symplectic reduction, index theory,
modular weight}

\begin{document}

\maketitle

\begin{abstract}

   We study the geometric quantization of $b^k$-symplectic manifolds using the
integrability of Lie algebroids. Using a groupoid index, we define a
quantization for $b^k$-symplectic manifolds whose singular locus is a normal
crossing divisor and which carry a Hamiltonian action of a compact connected
Lie group, generalizing Guillemin--Miranda--Weitsman in a few directions.
Firstly, we show this quantization is the index of a $\spinc$-Dirac operator, answering a
question of theirs. In particular, it is a finite-dimensional virtual
representation for every $k$, whereas their formal quantization is infinite
dimensional when the modular degrees are even. Secondly, our symplectic form can have singularities along hypersurfaces which can have normal crossings. 
Finally, we prove that 
quantization commutes with reduction for the Hamiltonian action of a possibly non-abelian compact connected Lie group, when the modular degrees are odd. In the case when the modular degrees are not odd, we give an example when $[Q,R]=0$ fails.

\end{abstract}

\section{Introduction}

In this article, we study the geometric quantization of two classes of Poisson manifolds arising from \emph{symplectic algebroids}. 
We utilize the integrability of these algebroids to desingularize the symplectic structure and to compute index formulas for Dirac operators on Lie groupoids; this technique was developed in \cite{sadegh2024fixed}.

Let $\mathcal{A}\to M$ be a Lie algebroid with anchor $\rho:\mathcal{A}\to TM$. Let $\omega\in \Gamma(\Lambda^2\A^*)$ be a symplectic form, that is

\begin{enumerate}
     \item  $\omega $ is nondegenerate i.e. the bundle morphism $\omega:\A\to \A^*$ is an isomorphism.

    \item $\omega$ is closed with respect to the de Rham differential of the algebroid.
\end{enumerate}
The pair $(\mathcal{A},\omega)$ is called a \emph{symplectic algebroid}. Such a pair gives rise to a Poisson structure on $M$. One can define the bundle map $\alpha:T^*M\to TM$

\[\xymatrix{\mathcal{A}\ar[r]^{\rho}&TM\\
\mathcal{A}^*\ar[u]^{\omega^{-1}}&  \ar[l]^{\rho^*} T^*M \ar[u]_{\alpha}}.\]
It turns out that the morphism $\alpha$ is given by pairing with a Poisson bivector. Recently, \cite{cren2026symplectic} proved the deformation quantization of Kontsevich \cite{kontsevich2003deformation} of Poisson manifolds arising from symplectic algebroids. In this article, we study the geometric quantization of two classes of such Poisson structures known as Log-symplectic manifolds and $b^k$-symplectic manifolds. The geometric quantization of Log-symplectic manifolds has been studied by \cite{guillemin2018geometric,braverman2019geometric,lin2022log}, and for $b^k$-symplectic manifolds by \cite{guillemin2021geometric}. We generalize these results of the latter in two directions. First, we let
the $b^k$-symplectic form have singularities along hypersurfaces that can have normal crossing
intersections. Second, we let the Hamiltonian action be given by a general compact connected
Lie group. We then show that our quantization commutes with the reduction.

\subsection{E-manifolds}
Let $M$ be a manifold and let $\tau$ be a subsheaf of vector fields on $M$ with the following properties:
\begin{enumerate}
    \item $\tau$ is closed under the Lie bracket of vector fields. 

    \item $\tau$ is a locally free sheaf of constant rank.
\end{enumerate}
From the second property and the Serre-Swan theorem, it follows that there exists a vector bundle $\A\to M$ whose sections identify with sections of the sheaf $\tau$, and also there exists a canonical bundle map $\rho:\A\to TM$. The first property gives a Lie bracket on $\Gamma(\A)$. In total, we obtain a Lie algebroid:
\[\xymatrix{\A\ar[r]^{\rho}\ar[rd]&TM\ar[d]\\  &M}.\]
The pair $(M,\tau)$ is introduced in \cite{miranda2021geometry} and is called an $ E$-manifold, and the corresponding algebroid $\A$ is called an $ E$-tangent bundle. One has the following
\begin{proposition}\label{prop:E-tangentintegrable}
    $E$-tangent bundles are always integrable.
\end{proposition}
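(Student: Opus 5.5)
The key observation is that an $E$-tangent bundle has an anchor which is injective on a dense open set. The plan is to deduce integrability from the Crainic--Fernandes criterion, or more directly from Debord's theorem: a Lie algebroid whose anchor is injective on a dense open subset (an "almost injective" algebroid) is integrable.

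First I would verify almost injectivity. By construction, $\Gamma(\A)\cong\tau\subset\mathfrak{X}(M)$, and the map $\rho$ on sections is the inclusion of the sheaf $\tau$ into vector fields, so $\rho$ is injective on sections. I would show that the set $U=\{x: \rho_x \text{ injective}\}$ is open and dense. Openness follows from lower semicontinuity of rank. For density, take a nonempty open $V$ disjoint from $U$. On $V$ the rank of $\rho_x$ is everywhere $<r=\operatorname{rank}\A$. Choose an open $W\subset V$ on which the rank is maximal, hence constant, and on which $\A$ is trivial. There $\ker\rho|_W$ is a nonzero subbundle, so it has a nonzero local section $s$ supported in $W$; multiplying by a bump function keeps $s$ in the kernel. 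Then $s$ is a nonzero section with $\rho(s)=0$, contradicting injectivity on sections, which holds because $\tau$ is a subsheaf of vector fields. So $\rho$ is injective on an open dense set.

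Second, I would invoke Debord's theorem [Debord, \emph{Holonomy groupoids of singular foliations}, J. Differential Geom. 2001]. It states that a Lie algebroid whose anchor is injective on a dense open set is integrable, and in fact the integrating groupoid is the holonomy groupoid of the singular foliation $\tau$, constructed by Androulidakis--Skandalis; in this case it is a Lie groupoid. As a cross-check, I could argue via Crainic--Fernandes: the monodromy groups $\mathcal N_x\subset \ker\rho_x$ must be uniformly discrete. On $U$ the isotropy $\ker\rho_x$ is zero, so these groups are trivial. At singular points, the monodromy is controlled by variations of $\A$-paths whose images under $\rho$ vanish, and density of $U$ forces the relevant monodromy elements to be zero.

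The main obstacle is the density step: one has to be careful that "injective on sections" really is the defining property of $\A$ coming from the Serre--Swan identification, and that it passes to pointwise injectivity on a dense set. The bump-function argument above handles this. The rest is a citation of Debord's result, which I would not reprove.
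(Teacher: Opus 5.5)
Your proposal is correct and follows the same route as the paper: show that the anchor is injective on an open dense set, then invoke Debord's integrability theorem for almost injective algebroids. Your bump-function argument, which uses that $\rho$ is injective on sections because $\Gamma(\A)\cong\tau$, makes explicit a step the paper leaves implicit when it passes from ``the foliation is regular on an open dense set'' to ``the anchor is injective there''. The Crainic--Fernandes cross-check is only heuristic and is not needed.
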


\begin{proof}[\textbf{Proof of Proposition \ref{prop:E-tangentintegrable}}]
The proof follows a result of Debord on almost injective algebroids. Let $\tau$ be a locally free subsheaf of vector fields that is of constant rank and closed under the Lie bracket, and let $\A$ be the corresponding $E$-tangent bundle. In particular $\tau$ is a (possibly singular) foliation. By basic results on singular foliations (e.g. \cite{laurent2024invitation}), there exists an open and dense subset of $M$ over which the foliation is regular, and the leaves are of maximal dimension. Over this open set, the algebroid $\A$ has an injective anchor. Hence, by \cite{debord2000groupoi,debord2001holonomy}, the algebroid is integrable.
\end{proof}
\begin{remark}
    This proof was suggested to the author by Christian Blohmann and Clement Cren. 
\end{remark}

There are important examples of $E$-manifolds (c.f. \cite{miranda2021geometry}):
\begin{example}
    \begin{itemize}
        \item \textbf{$b$-tangent bundle and $b^k$-tangent bundle}: Let $Z\subset M$ be a (possibly disconnected) hypersurface. Then the sheaf of vector fields on $M$ that are tangent to $Z$ gives an E-manifold known as \emph{$b$-manifold}. By adding $k$-jet data along $Z$, one can define a  \emph{$b^k$-manifold}.

        \item \textbf{log-tangent bundle} Let $Z$ be an immersed submanifold of $M$ with self-intersections of normal crossing type. Then one can still define the sheaf of vector fields on $M$ tangent to $Z$. This E-manifold is called a \emph{log-manifold} or \emph{c-manifold}. 
    \end{itemize}
\end{example}

When an $E$-tangent bundle $\A$ is paired with a symplectic form $\omega$, we call the pair $(\A,\omega)$ an \emph{$E$-symplectic manifold}. Similarly, we use the terms $b^k$-symplectic and log-symplectic manifolds when a symplectic form is given.

\subsection{Prequantizability of E-symplectic manifolds}
Let $(\mathcal{A},\omega)$ be a symplectic algebroid over a closed manifold $M$. In order to create prequantization data, we need a line bundle $\L\to M$ whose first Chern class is prescribed by the symplectic form $\omega$. In general, we do not yet have a general prescription. However, we give a prescription for log- and $b^k$-symplectic manifolds:

\begin{definition}
    Let $(\A,\omega)$ be a log- or $b^k$-symplectic manifold. We call $(\A,\omega)$ prequantizable if the smooth part of the cohomology class $\sm[\omega]\in H^{2}(M)$ is integral. In this case, there exists a prequantum line bundle $L$ for which $c_1(\L)=\sm[\omega]$. 
\end{definition}

\begin{remark}
    This prequantization condition coincides with the prequantization condition in \cite{lin2022log} for log-symplectic manifolds and is more relaxed than that of \cite{guillemin2021geometric} for $b^k$-symplectic manifolds. 
\end{remark}

\subsection{Geometric quantization}\label{ss:geometricquantizationmethod}
We give our geometric quantization method here. Let $(\A,\omega)$ be a symplectic algebroid for which the prequantum line bundle $\L\to M$ is given (this assumption is clear for log- and $b^k$-symplectic settings). Let $J$ be an $\omega$-compatible complex structure on the fibers of $\A$. So we have decompositions $\A\otimes\bC=\A^{(1,0)}\oplus\A^{(0,1)}$ corresponding dual spaces $\A^{*,(1,0)}$ and $\A^{*,(0,1)}$. We form the spinor vector bundle $S=S^+\oplus S^-$ with 
\[S^+:=\bigoplus_{r \ \text{even}}\Lambda^r\A^{*,(0,1)}\otimes \L, \ \ \ \ S^-:=\bigoplus_{r \ \text{odd}}\Lambda^r\A^{*,(0,1)}\otimes \L\]
which is $\Cl(\A^*)$-module. Here $\Cl(\A^*)$ is the associated complex Clifford algebra bundle given with respect to the metric $g(v,w):=\omega(v,Jw)$. Similar to the usual Clifford modules, one can define a canonical differential operator, \emph{algebroid-Dirac operator}, on $\Gamma(S)$:
\[D=\begin{pmatrix}
0 & D^-\\
D^+ & 0
\end{pmatrix}:\Gamma(S^+)\oplus\Gamma(S^-)\to\Gamma(S^+)\oplus\Gamma(S^-).\]
This operator is obtain by composing a compatible \emph{algebroid-connection} $\nabla$ with the Clifford multiplication. The algebroid connection on $S$ is obtained by tensoring the algebroid connection on the $\Lambda^*\A^{*,(0,1)}$ induced from the Levi-Civita connection on the algebroid $\A$, with an algebroid connection on $\L$ whose curvature has the same cohomology class as the $\sm[\omega]$.

Like Dirac operators, this is a first-order, essentially self-adjoint differential operator, but it is elliptic if and only if the anchor of $\A$ is surjective. So in the $E$-symplectic case, this operator is not elliptic unless the sheaf is the set of all vector fields. Despite the lack of ellipticity, one still can define an index for such operators. For more details on the algebroid-Dirac operators, see \cite{sadegh2024fixed}.

To define this index, we take a groupoid $\G\rightrightarrows M$ integrating the algebroid $\A$. The algebroid-Dirac operator gives rise to a family of source-wise Dirac operators on the groupoid due to the fiberwise ellipticity; it gives an index class in the K-theory group $[D]\in K_0(\Psi^{-\infty}(\G))$ where $\Psi^{-\infty}(\G)$ is the algebra of smoothing operators on the groupoid. In the log- or $b^k$-case, one has a canonical trace $\tr:\Psi^{-\infty}(\G)\to \bC$; by pairing with the trace we obtain the geometric quantization:
\[Q(\A,\omega):=\tr([D]).\]

 It is not clear in the outset that the quantization above is an integer (see Remark \ref{rem:nonintegralquantization}). In the case of the log-tangent bundle for a defined normal crossing divisor (see Section \ref{sec:logsymplectic}), or in the case of $b^k$-tangent bundle, we will show the quantization is the index of an honest (elliptic) $\spinc$-Dirac operator.
\medskip
\begin{theorem}\label{thm:logmanifoldquantizationformula}
    Let $(M,Z)$ be a generalized normal crossing divisor with the associated log-tangent bundle $\A=T_ZM$. If $\omega$ is a prequantized symplectic form, then quantization $Q(\A,\omega)$ coincides with the geometric quantization formula in \cite{lin2022log}. Indeed
    \[Q(\A,\omega)=\text{pv}_Z\int_M\Td^{\A}(\A)e^{\sm[\omega]}.\]
    Here $\text{pv}_Z\int_M$ is the principal value integral of singular forms, and $\Td^{\A}$ is the algebroid-Todd class. 
\end{theorem}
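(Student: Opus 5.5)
We will prove the formula by computing the trace of the index class through a groupoid index theorem, and then rewriting the result as a principal value integral. The tool is the index formula of \cite{sadegh2024fixed} for algebroid-Dirac operators on integrable algebroids equipped with a trace.

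\textbf{Step 1: choice of groupoid and trace.} For the log-tangent bundle $\A=T_ZM$ of a normal crossing divisor, we take the integrating groupoid $\G$ of Proposition \ref{prop:E-tangentintegrable}; concretely this is the $b$-type (or edge) groupoid. Over $M\setminus Z$ it is the pair groupoid. Over the strata of $Z$ its isotropy groups are copies of $\bR^j$, one factor for each local branch meeting at the point. The canonical trace $\tr$ on $\Psi^{-\infty}(\G)$ is the regularized integral of the kernel along the unit space. One regularizes by cutting off at $\epsilon$-tubes around the branches $Z_i$ and taking the finite part as $\epsilon\to 0$. Since the isotropy groups $\bR^j$ are unimodular, this is a genuine trace, and the pairing $\tr([D])$ is well defined on $K_0$.

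\textbf{Step 2: local index computation.} Take the heat-kernel representative $e^{-tD^-D^+}-e^{-tD^+D^-}$ of $[D]$, so that $\tr([D])=\tr(e^{-tD^-D^+})-\tr(e^{-tD^+D^-})$ for each $t>0$. The $t$-independence holds because $\tr$ vanishes on commutators; this uses the trace property of Step 1.

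Now let $t\to 0$. Getzler rescaling applies source-fiberwise, and the source fibers are honest Riemannian manifolds modeled on $\A$. Hence the pointwise supertrace of the kernel converges to the top-degree part of $\Td^{\A}(\A)\,e^{c_1^{\A}(\L)}$. These are characteristic forms built from the algebroid Levi-Civita connection and the algebroid connection on $\L$, pushed to singular forms on $M$ via $\rho^*$.

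The regularized integral of this limiting form is exactly $\text{pv}_Z\int_M$. This step needs uniformity of the small-$t$ asymptotics up to $Z$, which follows from the translation-invariant model near each stratum.

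\textbf{Step 3: cohomological identification.} The curvature of $\L$ was chosen in the class $\sm[\omega]$. The principal value integral of a closed $\A$-form depends only on its $\A$-cohomology class, so we may replace the curvature form by a representative of $\sm[\omega]$ and obtain $\text{pv}_Z\int_M\Td^{\A}(\A)e^{\sm[\omega]}$. This requires a Stokes-type lemma: $\text{pv}_Z\int_M d\beta=0$ for a log-form $\beta$ with no residue obstruction. We expect to prove it by a Mazzeo--Melrose decomposition along each branch, using the normal crossing condition to handle corners.

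Comparing with \cite{lin2022log}, whose formula is precisely this expression, gives the identification with their quantization.

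\textbf{Main obstacle.} We expect the hardest step to be Step 2 at the corners of $Z$. There one must justify interchanging the $t\to0$ limit with the regularization, and show that the boundary, or eta-type, contributions vanish.

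Our first attempt will use the $\bR^j$-invariance of the model operator near a codimension-$j$ stratum. Its kernel on the isotropy is then a convolution kernel, and its regularized trace is $t$-independent and local. This should make the potential defect terms, which are the $b$-trace commutator terms, vanish: they would be integrals of derivatives in the $\log$-radial directions, and for a symplectic $\A$-form these derivatives are exact.

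If this fails, the fallback is induction on the depth of the stratification. One would blow up the deepest stratum and apply the $b$-calculus result of Step 1 iteratively.
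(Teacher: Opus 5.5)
Your architecture matches the paper's. You pair the groupoid index class with the canonical trace and evaluate it by the index formula of \cite{sadegh2024fixed}; the paper simply cites Theorem 6.10 and Corollary 8.6 there, while you sketch a heat-kernel proof. The gap is that the step everything rests on, namely that $\tr$ is a trace, is justified incorrectly.

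\textbf{The trace property.} Unimodular isotropy is not the reason. Melrose's $b$-trace on a manifold with boundary lives on a groupoid with the same isotropy $\bR$, yet ${}^b\tr[A,B]\neq0$ in general. Up to sign its defect is $\int_{\bR}u\,\tr_Z\big(A_0(u)B_0(-u)\big)\,du$, where $A_0,B_0$ are the restrictions of the kernels to the isotropy and $u=\ln(x/x')$. This defect is exactly what produces eta invariants.

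What makes it work here is that each branch is two-sided. Use coordinates $(x_i,y,y',t)$ on $\G$ near a branch, with $s$ rescaling $x_i$ by $e^{-t}$.
\begin{itemize}
\item Then $\tau(r_*h)-\tau(s_*h)$ reduces to an integral over the two shells between $|x_i|=\epsilon$ and $|x_i|=e^{t}\epsilon$, one on each side.
\item On these shells the $\A$-form $dx_i/x_i$ integrates to $t$ and to $-t$.
\item Since $h$ is smooth across $x_i=0$ on $\G$, the two contributions cancel as $\epsilon\to0$. At crossings, argue in each $x_i$ separately.
\end{itemize}
This requires turning the restricted kernel, which is an $\A$-density, into an $\A$-form via the orientation of $\omega^n$, which differs from that of $M$ by the sign of $x_i$. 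If, as your Step 1 literally says, you take the finite part of the density integral, both shells carry $t$. The defects then add, and $\tr([D])$ depends on the chosen representative of $[D]$.

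Equivalently, $\textup{pv}_Z\int_M d_\A\beta=0$ for every $\A$-form $\beta$, with no residue hypothesis, because the boundary terms at $x_i=\pm\epsilon$ see only the smooth part of $\beta$. So the Stokes lemma you postpone to Step 3 is the real content of Step 1. Once it is in place, $t\mapsto\tr_s(e^{-tD^2})$ is constant and no eta-type terms arise at the corners. The mechanism in your last paragraph (exactness coming from $\omega$) is not what is going on; $\omega$ enters only through the orientation it provides.

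\textbf{Heat kernel.} The heat kernel is not in $\Psi^{-\infty}(\G)$, since it is not compactly supported in the isotropy directions. You need to extend $\tr$, with its trace property, to kernels with Gaussian decay along $\G$. You also need to check that the heat-kernel idempotent represents $[D]$ in that larger algebra.

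\textbf{Comparison with \cite{lin2022log}.} This is not a matter of reading off an identical formula. Their quantization is the index of the $\spinc$-Dirac operator $\slashed{D}^{\L}$ built from $T_ZM\oplus\bR\simeq TM\oplus\bR$, which exists only for defined divisors. As in the paper's Remark, you need the following steps.
\begin{itemize}
\item Take an ordinary connection $\nabla$ on the vector bundle $T_ZM$. The $\A$-connection $X\mapsto\nabla_{\rho(X)}$ has curvature $\rho^*F_\nabla$.
\item Hence $\Td^{\A}(T_ZM)$ is $\A$-cohomologous to $\rho^*$ of a smooth representative of $\Td(T_ZM)=\Td(TM\oplus\bC)$.
\item The principal value then becomes an ordinary integral of a smooth form, and Atiyah--Singer gives the index of $\slashed{D}^{\L}$.
\end{itemize}
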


\begin{remark}
    In the case of defined normal crossing divisors, Theorem \ref{thm:logmanifoldquantizationformula} recovers the quantization formula in \cite{lin2022log}.
\end{remark}

The main result of this article is the following: 
\begin{theorem}\label{thm:b^kmanifoldquantizationformula}
    Let $(M,Z)$ be a defined normal crossing divisor. Let $\A=\bk T_ZM$ be the associated $b^k$-tangent bundle. If $\omega$ is a prequantized symplectic form on $\A$, then the geometric quantization is given by the following formula
    \[Q(\A,\omega)=\text{fp}_Z\int_M\Td^{\A}(\A)e^{\sm[\omega]},\]
    where $\text{fp}_Z\int_M$ denotes the Liouville volume of $b^k$-forms. We will show the quantization is the index of an honest $\spinc$-Dirac operator, hence an integer.
    \end{theorem}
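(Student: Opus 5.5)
Since $D$ is not elliptic, the first step is to compute $\tr([D])$ by an index formula on the integrating groupoid and then identify it with the stated finite-part integral. After that, we show the result equals the index of an honest $\spinc$-Dirac operator on $M$.

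We first fix an integrating groupoid $\G\rightrightarrows M$ for $\A=\bk T_ZM$. For a defined normal crossing divisor $Z=\bigcup Z_i$ with defining functions $x_i$, we use the explicit groupoid obtained by gluing the pair groupoid of $M\setminus Z$ with, near each intersection stratum, the action groupoid of $\bR^{p}$ acting by the flows of the $b^k$-vector fields $x_i^k\partial_{x_i}$. Proposition \ref{prop:E-tangentintegrable} guarantees that an integrating groupoid exists in general, and the explicit model gives control of the trace. On $\Psi^{-\infty}(\G)$, the canonical trace is the finite-part (Hadamard) regularization of the integral of the Schwartz kernel on the diagonal. Concretely, we integrate over $\{|x_i|\ge\epsilon\}$, expand in $\epsilon$, and keep the constant term. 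We then run the heat-kernel (Getzler rescaling) argument for the algebroid-Dirac operator of \S\ref{ss:geometricquantizationmethod}, following \cite{sadegh2024fixed}. The operators $e^{-tD^-D^+}$ and $e^{-tD^+D^-}$ lie in the groupoid calculus, so $\tr([D])=\tr(e^{-tD^-D^+})-\tr(e^{-tD^+D^-})$. The pointwise $t\to0$ asymptotics of the supertrace are local and give the form $\Td^{\A}(\A)\,\mathrm{Ch}^{\A}(\L)$. Here the algebroid curvature of $\L$ represents $\omega$, or a form differing from it by a $b^k$-exact term. We must also show that the finite part commutes with $t\to0$ and that the supertrace is $t$-independent. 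This uses the fact that $\tr$ vanishes on commutators, which holds for the model groupoid because the boundary defect terms are residues of exact $b^k$-forms and have no finite part.

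The second step rewrites the result cohomologically. A $b^k$-form $\omega$ decomposes near $Z$ as
\[
\omega=\sum_i \alpha_i\wedge\frac{dx_i}{x_i^{k}}\ (\text{plus lower }x_i^{-j}\text{ terms})+\beta,
\]
with $\beta$ smooth, and $[\beta]=\sm[\omega]$. We show that the finite-part integral of $\Td^{\A}(\A)e^{\omega}$ depends only on the cohomology classes. We also show that replacing $e^{\omega}$ by $e^{\sm[\omega]}$ changes nothing: the singular parts contribute only through the finite part at $Z$, and for the exponential these contributions are residues. This yields the formula $\mathrm{fp}_Z\int_M\Td^{\A}(\A)e^{\sm[\omega]}$.

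For integrality, we compare $\Td^{\A}(\A)$ with $\Td(TM)$. The anchor $\rho:\A\to TM$ is an isomorphism off $Z$, and $\A\cong TM$ as complex (stable) vector bundles, via a homotopy of the anchor, for example $x^k\partial_x\mapsto\partial_x$ after twisting by the trivialized normal line bundles $\L_{Z_i}^{\otimes k}$. This identifies $\Td^{\A}(\A)$, up to a correction supported near $Z$, with the Todd form of a $\spinc$ structure on $M$ whose determinant line is $\det\A$ tensored with $\bigotimes_i[Z_i]^{\pm k}$. The finite part of the correction term must then be evaluated. Each stratum contributes $\int_{Z_I}$ of a characteristic form times a universal one-variable finite part. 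For odd $k$ (and, we expect, in general at the level of the fp-integral) this universal constant makes the total equal to $\int_M\Td(M,\spinc)e^{c_1(\L')}$ for a modified line bundle $\L'$. By Atiyah--Singer, that integral is the index of an honest $\spinc$-Dirac operator, hence an integer.

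The main obstacle is this last step: computing the universal one-dimensional finite-part integrals near each normal-crossing stratum and checking that they recombine exactly into a Chern character of a genuine line bundle. We will first reduce to a single $b^k$-hypersurface by working stratum by stratum with the product structure of the model groupoid, then do the explicit computation for $\bR\times Z$ with the $b^k$ rescaling.
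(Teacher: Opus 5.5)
\textbf{The gap is in your integrality step.} You expect $\Td^{\A}(\A)$ to differ from a Todd form on $M$ by a correction supported near $Z$. Its finite part would produce universal one-variable constants, which you only conjecture recombine into $\Ch$ of some modified $\L'$, and only for odd $k$. As written this does not prove the statement, which asserts integrality for every $k$; parity matters only for $[Q,R]=0$.

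The missing idea is that no such correction survives. By algebroid Chern--Weil theory, the class of $\Td^{\A}(\A)$ in $H^*(\A)$ does not depend on the $\A$-connection. Take the $\A$-connection $X\mapsto\nabla_{\rho(X)}$ induced by an ordinary complex-linear connection $\nabla$ on the vector bundle $\A$; it gives the smooth form $\rho^*\Td(\A)$. Changing the connection alters the form by $d_{\A}$ of a $b^k$-form, which the finite-part integral kills (this is what makes $\text{fp}_Z\int_M$ descend to cohomology). Since the defining functions trivialize the normal bundles, Lemma~\ref{lem:stablyisomorphic} gives $\A\oplus\bC\cong TM\oplus\bC$ as complex bundles. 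Hence $\Td(\A)=\Td(TM\oplus\bC)$ and
\[
\text{fp}_Z\int_M\Td^{\A}(\A)e^{\sm[\omega]}=\int_M\Td(TM\oplus\bC)\Ch(\L)=\textup{Ind}(\slashed{D}^{\L}),
\]
the index of the $\spinc$-Dirac operator with determinant line $\det_{\bC}(\bk T_ZM)$, twisted by $\L$. This is the paper's argument. It needs no line bundles $[Z_i]^{\pm k}$ (which are trivial here) and no stratum-by-stratum finite-part computation.

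\textbf{There is also an error in your first step}, which otherwise agrees with the paper (the paper simply invokes \cite[Theorem 6.10]{sadegh2024fixed}). The algebroid curvature of $\L$ does not represent $\omega$ up to $d_{\A}$-exact terms. $\L$ is a genuine line bundle on $M$, so by the same Chern--Weil fact its curvature class is $\rho^*c_1(\L)=\rho^*\sm[\omega]$. By contrast, $[\omega]\in H^2(\A)$ has non-zero residue components: for $\omega_{a,k}$ on $S^2$ the leading residue is $\d\theta/2\pi$, which is not exact on $Z_a$. So $e^{\sm[\omega]}$ enters directly. Your claim that replacing $e^{\omega}$ by $e^{\sm[\omega]}$ ``changes nothing'' is therefore unneeded. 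As stated it is also unproved, since $\sm$ is defined by duality and is not a priori multiplicative.
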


    \begin{theorem}\label{thm:bk[Q,R]=0intro}
    When $(\A,\omega, \mu)$ is a prequantized $G$-Hamiltonian compact $b^k$-symplectic manifold one has the formula
    \[Q^G(\bk T_ZM,\omega,\mu)(g)=\textup{fp}_{Z^g}\int_{M^g}\frac{\Td^{\A}(\bk T_{Z^g}M^g)e^{\sm[\omega]-\bar{\mu}}}{\textup{ch}_g(\lambda_{-1}\nu^{0,1})}.\]
    If one assumes:
    \begin{enumerate}
        \item  Assume  $0$ is regular value for $\mu$, and $G$ acts freely on $\mu^{-1}(0)$;

        \item For every stratum $N=W_1\cap\cdots\cap W_p$ where $W_1,\cdots, W_p$ are any distinct components of $Z_{\neq0}$, the non-negative linear combination of $\{c_{W_1},\cdots, c_{W_p}\}$ is strongly convex;

        \item $(M,Z,x_i,\mu)$ has odd leading type.
    \end{enumerate}    
    Then the {quantization commutes with reduction} in the sense of \cite{guillemin1982geometric,meinrenken1999singular}:
    \[(Q^G(\bk T_ZM,\omega,\mu))^G=Q(\bk T_{Z_0}M_0,\omega_0).\]
    
\end{theorem}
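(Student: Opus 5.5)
The plan has two parts: first the equivariant index formula, then the $[Q,R]=0$ statement. Both rest on the desingularization idea behind Theorem \ref{thm:b^kmanifoldquantizationformula}.

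\textbf{The fixed-point formula.} The equivariant quantization $Q^G(\bk T_ZM,\omega,\mu)(g)$ is defined as the $g$-twisted groupoid trace of the index class $[D]$. The group $G$ acts on the integrating groupoid $\G\rightrightarrows M$ by groupoid automorphisms, and the algebroid-Dirac operator is $G$-invariant. I would therefore invoke the fixed-point theorem for algebroid-Dirac operators from \cite{sadegh2024fixed}. This localizes the trace to the fixed locus $M^g$, which is a $b^k$-manifold with divisor $Z^g=Z\cap M^g$ because $G$ preserves $Z$ and the defining functions. The output is an algebroid characteristic integral whose integrand is $\Td^{\A}$ of the restricted algebroid, times the equivariant Chern character of $\L$, divided by $\textup{ch}_g(\lambda_{-1}\nu^{0,1})$. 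Two further inputs finish this step. First, use the moment map to replace $e^{\sm[\omega]}$ by its equivariant extension $e^{\sm[\omega]-\bar\mu}$. Second, identify the canonical trace with the Liouville (finite part) regularization $\textup{fp}_{Z^g}\int_{M^g}$, exactly as in the non-equivariant Theorem \ref{thm:b^kmanifoldquantizationformula}.

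\textbf{Reducing to an honest $\spinc$-Dirac operator.} For $[Q,R]=0$, I would desingularize. Near each component $W$ of $Z$, write $\omega=\sum_j c_j\,\frac{dx}{x^{j}}\wedge\theta_j+\beta$. Odd leading type means the leading modular exponent $k$ is odd. In that case the singular term can be deformed: replace $x^{-k}$ by a smooth odd-like profile $f_\epsilon$ that agrees with it outside a collar. This produces a genuine symplectic (or folded, then $\spinc$) structure $\omega_\epsilon$ and a moment map $\mu_\epsilon$ on $M$. By Theorem \ref{thm:b^kmanifoldquantizationformula} and its equivariant analogue above, $Q^G(\bk T_ZM,\omega,\mu)$ equals the equivariant $\spinc$ index of $(M,\omega_\epsilon,\mu_\epsilon)$. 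The cancellation of the boundary finite-part contributions is exactly where oddness enters: on the two sides of $W$ the signs of $x^{-k}$ agree with the orientations. In the even case the two sides cancel instead, which is consistent with the failure example.

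\textbf{Applying the $\spinc$ form of $[Q,R]=0$.} Next I would apply Paradan's or Meinrenken--Sjamaar's $\spinc$ version of $[Q,R]=0$ to $(M,\omega_\epsilon,\mu_\epsilon)$. This requires showing the new critical set of $\|\mu_\epsilon\|^2$ contributes no $G$-invariants, and that the reduced space at $0$ is $M_0$ with induced $b^k$-structure $Z_0$. Hypothesis (1) gives a smooth reduction with a free action. Near the strata $N=W_1\cap\dots\cap W_p$, the desingularized moment map grows like $\sum_i f_\epsilon(x_i)c_{W_i}$. Strong convexity in (2) guarantees that these values stay in a pointed cone away from $0$. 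Then the Kirwan vector field $\mu_\epsilon$ has no zeros there, and the local multiplicities at those components of $\textup{Crit}\|\mu_\epsilon\|^2$ vanish by Paradan's localization: the weights of the normal directions lie in an open half-space.

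\textbf{Closing the argument and the main obstacle.} Finally, the reduced desingularized space $(M_0,(\omega_\epsilon)_0)$ is itself the desingularization of $(M_0,Z_0,\omega_0)$, so Theorem \ref{thm:b^kmanifoldquantizationformula} identifies its index with $Q(\bk T_{Z_0}M_0,\omega_0)$. I expect the main obstacle to be the vanishing of contributions from critical points created inside the collars near normal-crossing strata. This is where I would first carry out the local model computation on $\bR^p\times T^*T^p$ with a linear torus action, checking that Paradan's local index vanishes under the convexity hypothesis.
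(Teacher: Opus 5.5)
Your treatment of the fixed-point formula is the paper's: it is \cite[Theorem 6.10]{sadegh2024fixed}. The problem is in the $[Q,R]=0$ part. There you put hypothesis (3) in the wrong place, and the step that actually needs it is left unjustified.

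\textbf{Where oddness is actually used.} Identifying $Q^G(\bk T_ZM,\omega,\mu)$ with an honest equivariant $\spinc$ index has nothing to do with parity. The $\spinc$ structure comes from $\bk T_ZM\oplus\bR\simeq TM\oplus\bR$ (Lemma~\ref{lem:stablyisomorphic}), and $\Td^{\A}(\bk T_ZM)$ is cohomologous to the smooth class $\Td(TM\oplus\bC)$. So $Q^G=\textup{index}_G(\slashed D^{L})$ holds for every $k$, with no desingularization of $\omega$ and no cancellation of finite-part terms. Example~\ref{ex:QRfails} ($k=2$) is an honest index, $-1-t^{-1}$. What fails there is $[Q,R]=0$, not the index identification, so your explanation of the even case is wrong.

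Oddness enters in the collar estimate (Lemma~\ref{lem:collarnozeros}). Near $W_l$ the singular part of $\mu$ is $-t_lc_{W_l}+r_l$, with $t_l=\big((n_l-1)x_l^{n_l-1}\big)^{-1}$, or $t_l=-\ln|x_l|$ when $n_l=1$. Here $t_l>0$ on \emph{both} sides of $W_l$ precisely because $n_l$ is odd. Only then do $\mu$ and its regularization $\widetilde\mu_\epsilon$ stay near the cone spanned by the $-c_{W_l}$, so that strong convexity gives $\|\widetilde\mu_\epsilon\|\ge\delta\sum_lt_l-(\text{lower order})$.

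Your pointed-cone step silently assumes this common sign. As written it even contradicts it: with $f_\epsilon$ the odd-like profile replacing $x^{-k}$ in $\omega$, the coefficients $f_\epsilon(x_i)$ change sign across $W_i$, so the values do not stay in a pointed cone. When $n_W$ is even, any regularization of the leading term must vanish somewhere in the collar, and the Kirwan field can acquire new zeros there. In Example~\ref{ex:QRfails}, every regularization of $1/z$ vanishes in the collar, producing zeros of $\widetilde\kappa_\epsilon$ even though $\mu^{-1}(0)=\emptyset$.

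\textbf{Non-vanishing of the Kirwan field, and the remaining steps.} Values in a pointed cone away from $0$ do not by themselves give $\widetilde\kappa_\epsilon\neq0$. The field $\widetilde\kappa_\epsilon(m)=(\widetilde\mu_\epsilon(m))_M(m)$ vanishes whenever $\widetilde\mu_\epsilon(m)$ lies in the stabilizer algebra of $m$. The paper closes this with \cite[Lemma 4.9]{lin2022log}. For $X=\sum_lt_lc_{W_l}\neq0$, the residue pairings $u_l=\iota(X_{Z_l})\res^1_{l,n_l}(\omega)=\sum_{l'}t_{l'}\langle c_{W_l},c_{W_{l'}}\rangle$ satisfy $\sum_lt_lu_l=\|X\|^2>0$. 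Hence $X_M\neq0$ near $N$ for all $X$ in a conic neighbourhood of the cone. With (2), (3) and this lemma, the collar contains \emph{no} zeros of the Kirwan field, so $\widetilde\C=\C$ and $\widetilde\mu_\epsilon=\mu$ on it. The local index computation on $\bR^p\times T^*T^p$ that you single out as the main obstacle never arises.

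Desingularizing $\omega$ into a folded form also creates obligations you do not discharge:
\begin{enumerate}
\item You must show its $\spinc$ structure and prequantum line bundle agree with those of $\bk T_ZM$ and $L$.
\item You must show a $\spinc$ $[Q,R]=0$ theorem applies. Meinrenken--Sjamaar needs a symplectic form. For the $\beta\neq0$ terms, the paper verifies the positivity criterion of \cite[Theorem 9.6]{paradan2019witten} via Kostant's formula on $\C_\beta$, which lies outside the collar where $\widetilde\mu_\epsilon=\mu$ is an honest moment map.
\item You must show the $\beta=0$ term is $Q(\bk T_{Z_0}M_0,\omega_0)$. The paper gets this from the $b^k$ equivariant normal form (Lemma~\ref{lem:bknormalform}) and a split compatible $J$, not from desingularizing $(M_0,Z_0,\omega_0)$.
\end{enumerate}
Finally, $\mu^{-1}(0)$ does meet the components of $Z\backslash Z_{\neq0}$. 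No parity is assumed there, since odd leading type concerns $n_W$, not $k$. So an odd-like profile is not available on those components.
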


\medskip
\begin{remark}
    Theorems \ref{thm:b^kmanifoldquantizationformula} and \ref{thm:bk[Q,R]=0intro} are significant in three ways. First, it provides a positive answer to the conjecture by Guillemin, Miranda and Weitsman \cite{guillemin2021geometric} that, when it is finite, the quantization is an index of a Fredholm operator. Second, the quantization formula by Guillemin, Miranda and Weitsman is not finite when $k$ is even; however, our quantization is finite, but it does not satisfy the $[Q,R]=0$ condition (see Example \ref{ex:QRfails}); the failure is due to the violation of Condition (3) in Theorem \ref{thm:bk[Q,R]=0intro}. Third, we generalize the quantization of \cite{guillemin2021geometric} from the Hamiltonian action of a torus and disjoint hypersurfaces to any compact connected Lie group and normal crossing divisors.
\end{remark}

\begin{remark}
    Conditions (2) and (3) in Theorem \ref{thm:bk[Q,R]=0intro} are crucial in the $[Q,R]=0$ statement. When $k=1$, Condition (3) is vacuous, while Condition (2) is equivalent to the properness of the moment map (see \cite[Lemma 4.8]{lin2022log}). When the hypersurfaces have no crossings, again, Condition (2) is vacuous.  
\end{remark}

\medskip
\subsection{Structure of the article}
Section \ref{sec:logsymplectic} reviews normal crossing divisors, log-tangent bundles, and log-symplectic manifolds. Subsection \ref{ss:logtangent}, we define the log-tangent bundle for the immersed hypersurfaces and analyze its vector bundle structure; then we recover the algebroid structure in Subsection \ref{ss:liealgT_ZM}. In Subsection \ref{ss:principalvalue}, we discuss integration of log-forms, and in the subsequent Subsection \ref{ss:cohomologyT_ZM} we compute algebroid cohomology of the log-tangent bundle. We discuss the geometric quantization of log-symplectic manifolds in Subsection \ref{ss:geometricquantizationlogsymlectic}, and show our formula recovers the quantization of \cite{lin2022log}.

Section \ref{sec:bkmanifold} covers the main results of the article. 
We generalize the notion of $b^k$-tangent bundles to the case with defined hypersurfaces that have normal crossings. We generalize the Liouville volume of $b^k$-forms by Scott \cite{scott2013geometry} to our general setting in Subsection \ref{ss:liouvillevolume}, and in Subsection \ref{ss:cohomologybktangentbundle} we compute the algebroid cohomology of the $b^k$-tangent bundle. Subsection \ref{ss:geomtricquantizationb^ksymplectic} gives a detailed construction of the geometric quantization of $b^k$-symplectic manifolds, and Subsection \ref{ss:bkHamiltonianquantization} introduces the Hamiltonian setting and its quantization. In Subsection \ref{ss:quantizationFredhol}, we prove a conjecture by Guillemin, Miranda, and Weitsman \cite{guillemin2021geometric}, by showing that the geometric quantization equals the index of an $\spinc$-Dirac operator. The ``quantization commutes with reduction'' ($[Q,R]=0$) is stated in Subsection \ref{ss:[Q,R]=0}. Subsection \ref{ss:counterexample} is dedicated to a counterexample when Condition (3) in our theorem fails.

The proof of $[Q,R]=0$ is given in Section \ref{sec:[Q,R]proof}.

\subsection*{Background material}
In this paper, we rely on the basic knowledge of the differential topology of Lie algebroids. This includes the algebroid cohomology, algebroid connections, and Chern-Weil theory of algebroids. See \cite[Section 2]{sadegh2024fixed} for more details.

\subsection*{Acknowledgements} The author wants to thank Christian Blohmann, Ed McDonald, Yiannis Loizides, and Nigel Higson for the helpful discussions and suggestions. The author wants to thank Clement Cren and Erfan Rezaei Gharehbolagh for great discussions on Groupoids and K-theory. The author wants to thank the Max Planck Institute for Mathematics in Bonn for the scholarship and hosting during the summers of 2025 and 2026, where most of the ideas of this article brewed.

\subsection*{Declaration of generative AI use in the writing process} During the preparation of this work, the author used Claude (Anthropic, model Claude Opus 5) to write code for generating the figures from the author's own data, and for grammatical corrections and proofreading. The author reviewed and edited all output and takes full responsibility for the content of the publication. The core ideas, arguments, and conclusions are entirely the author's own.

\section{Log-symplectic Manifolds}\label{sec:logsymplectic}

Let $M$ be a manifold of dimension $n$ and let $Z$ be a not-necessarily-connected closed manifold of dimension $n-1$ with an immersion $i:Z\to M$. We say the immersion is \emph{self-transverse} if for all distinct points $z_1,\cdots,z_k\in Z$ that map to the same point $m\in M$ under the immersion, the tangent spaces $\{i_*(T_{z_i}Z)\}_{i=1}^k$ are in general position as subspaces of $T_mM$, i.e., they intersect like coordinate hyperplanes of $\mathbb{R}^n$. The data $(M,Z)$ above is referred to as a $c$-manifold in \cite{miranda2021geometry}, but we refer to this as a \emph{generalized normal crossing divisor} to be more in line with the terminology in \cite{lin2022log}. 

By \emph{defined normal crossing divisors} $(M,Z,x_i)$, we additionally mean that one can write $Z$ as a disjoint union of open submanifolds $Z_1\sqcup \cdots\sqcup Z_p$ such that $i:Z_j\to M$ is an embedding and a defining function $x_j$ for each $Z_j\simeq i(Z_j)$. We will use the defined normal crossing divisors in Section \ref{sec:bkmanifold}.

\graphicspath{ {images/} }


\begin{figure}[ht]
\centering
\begin{tikzpicture}[
    x=1cm, y=1cm, font=\footnotesize,
    tor/.style    = {draw=black!60, line width=0.5pt},
    rib/.style    = {draw=black!16, line width=0.35pt},
    eight/.style  = {draw=blue!55!black, line width=1.2pt},
    eighth/.style = {draw=blue!55!black, line width=0.6pt, dash pattern=on 2pt off 2pt},
    ax/.style     = {draw=black!70, line width=0.5pt, -{Stealth[length=4pt]}},
    lev/.style    = {draw=black!45, line width=0.4pt, dash pattern=on 1.6pt off 1.6pt},
    dp/.style     = {circle, fill=black, draw=white, line width=0.5pt, inner sep=1.5pt},
    cp/.style     = {circle, draw=black!70, fill=white, line width=0.45pt, inner sep=1pt},
    lbl/.style    = {inner sep=1.5pt},
    cal/.style    = {draw=black!55, line width=0.4pt, -{Stealth[length=3.5pt]}},
    chart/.style  = {draw=black!45, line width=0.5pt, fill=black!3},
    sheet/.style  = {draw=black!75, line width=1.05pt}
  ]
 
\def\R{2.0}\def\rr{0.88}\def\al{42}\def\aa{-1.12}
\newcommand{\Sx}[2]{(\R+\rr*cos(#2))*cos(#1)}
\newcommand{\Sy}[2]{\rr*sin(#2)*sin(\al)+(\R+\rr*cos(#2))*sin(#1)*cos(\al)}
\newcommand{\vsil}[1]{(atan(sin(#1)*tan(\al)))}
\newcommand{\sarg}[1]{max(-1,min(1,\aa/(\R+\rr*cos(#1))))}
\newcommand{\uA}[1]{(asin(\sarg{#1}))}
\newcommand{\uB}[1]{(-180-asin(\sarg{#1}))}
 
\begin{scope}
 
  \draw[lev] (-4.08,-0.8323) -- (-0.12,-0.8323);
 
  \foreach \uu in {0,30,60,90,120,150,180,210,240,270,300,330}{
    \draw[rib] plot[domain=0:1, samples=42, variable=\s]
      ({\Sx{\uu}{\vsil{\uu}-180+180*\s}}, {\Sy{\uu}{\vsil{\uu}-180+180*\s}});
  }
 
  \draw[tor] plot[domain=0:360, samples=161, variable=\u]
      ({\Sx{\u}{\vsil{\u}}}, {\Sy{\u}{\vsil{\u}}});
  \draw[tor] plot[domain=0:360, samples=161, variable=\u]
      ({\Sx{\u}{\vsil{\u}+180}}, {\Sy{\u}{\vsil{\u}+180}});
 
  \draw[eighth] plot[domain=-19.6:142.3, samples=110, variable=\v]
      ({\Sx{\uA{\v}}{\v}}, {\Sy{\uA{\v}}{\v}});
  \draw[eight]  plot[domain=142.3:340.4, samples=130, variable=\v]
      ({\Sx{\uA{\v}}{\v}}, {\Sy{\uA{\v}}{\v}});
  \draw[eighth] plot[domain=-19.6:142.3, samples=110, variable=\v]
      ({\Sx{\uB{\v}}{\v}}, {\Sy{\uB{\v}}{\v}});
  \draw[eight]  plot[domain=142.3:340.4, samples=130, variable=\v]
      ({\Sx{\uB{\v}}{\v}}, {\Sy{\uB{\v}}{\v}});
 
  \node[dp] (nd) at (0,-0.8323) {};
  \node[cp] at (0,2.1403)  {};
  \node[cp] at (0,0.8323)  {};
  \node[cp] at (0,-2.1403) {};
 
  \begin{scope}[shift={(-4.15,0)}]
    \draw[ax] (0,-2.62) -- (0,2.64);
    \node[lbl,above] at (0,2.64) {$f$};
    \foreach \h/\t in {-2.1403/{\min f}, -0.8323/{a}, 0.8323/{b}, 2.1403/{\max f}}
      { \draw (-0.07,\h) -- (0.07,\h);  \node[lbl,left] at (-0.12,\h) {$\t$}; }
  \end{scope}

  \node[lbl, text=blue!55!black] (zl) at (2.86,-1.72) {$i(Z)=f^{-1}(a)$};
  \draw[cal] (2.68,-1.58) -- (2.52,-1.16);
 
  \node[lbl,align=center] (w2) at (-1.86,-2.34) {$W_2$: the saddle};
  \draw[cal] (-1.42,-2.16) -- (-0.14,-0.92);
 
  \node[lbl] at (2.10,1.86) {$M$};
\end{scope}
 
\begin{scope}[shift={(6.30,-0.15)}]
 
  \draw[sheet] (1.25,2) circle (0.75cm) ;
  \node[lbl,left]  at (1.15,2.85) {$z_1$};
  \node[dp] at (1.25,2.75) {};
  \node[lbl,left]  at (1.15,1.15) {$z_2$};
  \node[dp] at (1.25,1.25) {};
  \node[lbl,left]  at (2.45,2) {$Z$};
 
  \draw[cal] (1.30,1.05) -- (1.30,0.55);
  \node[lbl,right] at (1.34,0.80) {$i$};
 
  \draw[chart] (-0.05,-1.85) rectangle (2.65,0.40);
  \draw[sheet] (0.20,-1.62) -- (2.42,0.20);
  \draw[sheet] (0.20,0.20)  -- (2.42,-1.62);
  \node[dp] at (1.31,-0.71) {};
  \node[lbl,left]  at (1.18,-0.88) {$m$};
  \node[lbl,right] at (2.72,0.20)  {};
  \node[lbl,right] at (2.72,-1.62) {};
  \node[lbl,right] at (-0.02,-2.06) {$M$};
 
  \node[lbl,align=left] at (1.30,-2.72)
     {$\dim\ker\rho_m=2$ on $W_2$,\ $1$ on $W_1$,\ $0$ off $i(Z)$};
\end{scope}
 
\end{tikzpicture}
 
\caption{Consider a torus standing so its hole faces the reader, with the standard height function $f$ which is Morse with four critical points of heights $\min f<a<b<\max f$, as marked on the axis at the left.
 For a regular value the level set is embedded, a single circle or a
pair of circles. At the saddle value $a$, however, the two circles collide and
$f^{-1}(a)$ becomes an \emph{infinity sign} which is the image of an immersion of a circle with one double point. So $W_2$ is the single point $i(M_2)$ and $M_2$ consists of the two
ordered pairs of preimages of the saddle, while $W_1=i(Z)\setminus i(M_2)$ is
the remaining one-dimensional part. Since $\dim M=2$, at most two branches can
be in general position at a point, so $M_k=\emptyset$ for $k>2$. }
\label{fig:immersedZ}
\end{figure}


\subsection{log-tangent bundle}\label{ss:logtangent}

For an open subset $U$ of $M$, let $\Gamma_Z(U)\subset \Gamma(TU)$ be the collection of vector fields $X$ on $U$ that are ``tangent to $Z$", that is, for all $z\in Z$ with $i(z)\in U$, we have $X_{i(z)}\in i_*(T_{z}Z)$. This defines a sheaf $\Gamma_Z$ on $M$ that turns out to be locally free and of constant rank $n$ (see \cite[Section 3]{miranda2021geometry}). Hence $\Gamma_Z(M)$ identifies with the space of global sections of a vector bundle of rank $n$:

\begin{definition}
Denote by $T_ZM\to M$ the vector bundle associated with the sheaf $\Gamma_Z$. We call this vector bundle the log-tangent bundle associated with $(M,Z)$.
\end{definition}

The vector bundle $T_ZM$ and the tangent bundle $TM$ are very similar. Indeed, there is a canonical bundle map 
\[\rho:T_ZM\to TM\]
which is an isomorphism for fibers over $m\in M\backslash i(Z)$. We can say more:

\begin{proposition}\label{prop:kernelofanchor}
If $m\in M$ is the image of exactly 
$k$ points of $Z$, then 
the kernel of $\rho_m:T_ZM|_m\to TM|_m$ is $k$-dimensional. 
\end{proposition}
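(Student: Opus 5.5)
The plan is to reduce to a local coordinate computation. Self-transversality gives normal form coordinates near $m$, and there I would write down an explicit local frame of $\Gamma_Z$.

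Let $m$ be the image of the distinct points $z_1,\dots,z_k$. By self-transversality, the subspaces $i_*(T_{z_j}Z)$ intersect like coordinate hyperplanes. So I first choose a chart $(U;y_1,\dots,y_n)$ centered at $m$ such that the $k$ local branches of $i(Z)$ through $m$ are $\{y_1=0\},\dots,\{y_k=0\}$. This uses the standard fact that $k$ transverse embedded hypersurfaces through a point can be simultaneously straightened: take $y_j$ to be local defining functions of the branches, whose differentials are linearly independent at $m$, and complete them to a coordinate system. Since $Z$ is compact and $i$ is an immersion, I shrink $U$ so that $i(Z)\cap U$ consists only of these $k$ branches. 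I expect this step to be the main obstacle, because it needs care with the finitely many preimages and with the requirement that no other sheets enter $U$. It is routine, though, using compactness of $Z$ and the local embedding property of immersions.

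In this chart I claim that $\Gamma_Z(U)$ is freely generated over $C^\infty(U)$ by
\[ y_1\partial_{y_1},\ \dots,\ y_k\partial_{y_k},\ \partial_{y_{k+1}},\ \dots,\ \partial_{y_n}. \]
First, these fields are tangent to each branch $\{y_j=0\}$, so they lie in $\Gamma_Z(U)$. Conversely, let $X=\sum a_i\partial_{y_i}$ be tangent to $Z$. For each $j\le k$, tangency to $\{y_j=0\}$ forces $a_j|_{y_j=0}=0$, so by Hadamard's lemma $a_j=y_j b_j$ with $b_j$ smooth. The fields are also $C^\infty$-linearly independent, since they are independent on the dense set where every $y_j\neq 0$. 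This is exactly the local freeness quoted from \cite{miranda2021geometry}, and these sections give a local frame $e_1,\dots,e_n$ of $T_ZM|_U$.

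Finally, the anchor sends $e_j\mapsto y_j\partial_{y_j}$ for $j\le k$ and $e_j\mapsto\partial_{y_j}$ for $j>k$. At $m$, where $y=0$, the map $\rho_m$ sends $e_1,\dots,e_k$ to $0$ and sends $e_{k+1},\dots,e_n$ to the linearly independent vectors $\partial_{y_{k+1}},\dots,\partial_{y_n}$. Hence $\ker\rho_m=\mathrm{span}\{e_1|_m,\dots,e_k|_m\}$, which has dimension $k$. Equivalently, the image of $\rho_m$ is $\bigcap_j i_*(T_{z_j}Z)$, of dimension $n-k$, and rank–nullity gives the same conclusion. For $k=0$ this recovers the statement that $\rho$ is an isomorphism off $i(Z)$.
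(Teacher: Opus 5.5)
Your argument is correct and is the one the paper relies on. The paper states this proposition without a written proof, deferring to the local freeness of $\Gamma_Z$ from \cite{miranda2021geometry}. Your straightening chart, with local frame $y_1\partial_{y_1},\dots,y_k\partial_{y_k},\partial_{y_{k+1}},\dots,\partial_{y_n}$ evaluated at $m$, is exactly the implicit computation: it is the log analogue of the generators the paper writes down for $\bk T_ZM$. You also correctly use compactness of $Z$ to keep other sheets out of the chart.
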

When a point $m\in M$ is the image of $k$ points of $Z$, then $m$ belongs to an $(n-k)$-dimensional submanifold of $M$ formed by the ``self-intersections'' of $i(Z)$; 
Indeed, put $M_0:=M$ and for $k>0$ define 
\[{M_k}:=\{(z_1,\cdots,z_k)\in Z^k| z_i \ \textup{distinct}, i(z_i)=i(z_j) \}.\]
 Note that $M_k$ is a smooth manifold of dimension $n-k$. For $k>l$, by projecting onto the first $l$ components, we obtain self-transverse immersion 
\[M_k\to M_l \]
When $l=0$, denote $i:M_k\to M$; then the difference $W_k:=i(M_k)-i(M_{k+1})$ is an embedded $(n-k)$-dimensional submanifold consisting of those points of $M$ that are images of exactly $k$ distinct points of $Z$.
\subsection{Lie algebroid structure}\label{ss:liealgT_ZM} Note that the sheaf $\Gamma_Z$ is closed under the Lie bracket of the vector field. This gives a Lie algebroid structure on the log-tangent bundle:

\[\xymatrix{T_ZM\ar[r]^{\rho}\ar[rd]&TM\ar[d]\\
{}& M }\]
for which the canonical map $\rho:T_ZM\to TM$ is the anchor.

\medskip
\subsection{Principal value integral}\label{ss:principalvalue}
Let $\alpha\in\Gamma(\Lambda^nT_Z^*M)$ be a top-degree form. Locally, near the orbit $W_k$, the form $\alpha$
is of the form
\[\alpha=f\frac{dx_1}{x_1}\cdots \frac{dx_k}{x_k}dx_{k+1}\cdots dx_{n}+\beta.\]
Here $(x_1,\cdots,x_n)$ is a local coordinate near $W_k$, with $W_k=\{x_1=\cdots=x_k=0\}$; $f$ is a smooth function, and $\beta$ is a smooth form. Note that the integral 
\[\int_M \alpha\]
can blow up due to the singularity. However, we can extract the \emph{finite part} of this integral as follows. Define the principal value integral by

\[\textup{pv}_Z\int_M\alpha=\lim_{\epsilon\to 0}\int_{M_{\epsilon}}\alpha\]
where $M_{\epsilon}$ is the complement of an $\epsilon$-neighborhood of image of $Z$ in $M$ with respect to any Riemannian metric. See \cite{miranda2021geometry} for more details.

\medskip
\subsection{Algebroid cohomology}\label{ss:cohomologyT_ZM} Let $\A\to M$ is an algebroid with anchor $\rho:\A\to TM$. One defines the algebroid de Rham differential $d_{\A}:\Gamma(\Lambda^r\A^*)\to\Gamma(\Lambda^{r+1}\A^*)$ by
\[d_{\A}\alpha(X_0,\cdots,X_{r})=\sum_{i=0}^r(-1)^i\rho(X_i).\alpha(X_0,\cdots,X_{i-1},X_{i+1},\cdots,X_r)\]
\[+\sum_{i<j}(-1)^{i+j}\alpha([X_i,X_j],X_0,\cdots,X_{i-1},X_{i+1},\cdots,X_{j-1},X_{j+1},\cdots,X_r).\]
One has $d_{\A}^2=0$, and hence one can define the algebroid cohomology groups $H^i(\A)$, for $i=0,\cdots,n$, where $n$ is the rank of $\A$.

For the case of $\A=T_ZM$, the cohomology of the algebroid is closely related to the cohomology of the base:

\begin{proposition}\label{prop:cohomologylogtangent}
    There is a canonical isomorphism
    \[H^r(T_ZM)\simeq H^r(M)\oplus \hat{H}^{r-1}(Z_1)\oplus \hat{H}^{r-2}(Z_2)\oplus\cdots; \]
    Here $\hat{H}^i(Z_j)$ denotes the ``compatible cohomology'' of the manifolds $Z_j$ with respect to the free action of the symmetric group $S_j$. We will not define these groups as we will not use them here; see \cite{miranda2021geometry} for more details.
\end{proposition}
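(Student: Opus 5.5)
The plan is to prove the decomposition by filtering the log de Rham complex $(\Gamma(\Lambda^\bullet T_Z^*M),d_{\A})$ by the order of the log-poles and identifying the successive quotients through iterated residue maps. Write $\Omega^\bullet_Z(M):=\Gamma(\Lambda^\bullet T_Z^*M)$. Let $F^j\Omega^\bullet_Z(M)$ be the subcomplex of forms which, near every point of $W_k$, can be written using at most $j$ of the factors $\frac{dx_1}{x_1},\dots,\frac{dx_k}{x_k}$ in the local normal form of Subsection \ref{ss:principalvalue}. Then $F^0=\Omega^\bullet(M)$ and $F^n=\Omega^\bullet_Z(M)$. Each $F^j$ is preserved by $d_{\A}$, because $d_{\A}(dx_i/x_i)=0$ and $d_{\A}$ agrees with $d$ away from $i(Z)$.

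The central step is to build residue maps
\[\res_j: F^j\Omega^r_Z(M)\to \Omega^{r-j}(M_j)\]
by the local formula $f\,\frac{dx_{1}}{x_{1}}\wedge\cdots\wedge\frac{dx_{j}}{x_{j}}\wedge\gamma\mapsto (f\gamma)|_{M_j}$. Here a point of $M_j$ is an ordered $j$-tuple of branches of $Z$, and that ordering fixes both which coordinates $x_1,\dots,x_j$ are used and the sign. One has to check that $\res_j$ is independent of the chosen defining functions: changing $x_i$ to $e^{h}x_i$ alters $dx_i/x_i$ by $dh$, which has no pole, so the change falls into $F^{j-1}$ and dies under $\res_j$. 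Permuting the branches multiplies $\res_j$ by the sign of the permutation. The image of $\res_j$ is therefore the $S_j$-equivariant (sign-twisted) forms on $M_j$ that are compatible along $M_{j+1}\to M_j$. I would take the cohomology of this complex as the definition of $\hat H^{\bullet}(Z_j)$ in the sense of \cite{miranda2021geometry}. With this description one gets short exact sequences
\[0\to F^{j-1}\to F^j\xrightarrow{\res_j}\hat\Omega^{\bullet-j}(M_j)\to 0,\]
where surjectivity follows by extending forms off $M_j$ with a tubular neighbourhood and a partition of unity.

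To obtain a splitting rather than only a spectral sequence, I would fix global defining functions and connection data on tubular neighbourhoods. Using these, a closed form $\beta$ on $M_j$ is lifted to the closed log form $\tilde\beta=\sum\frac{dx_1}{x_1}\wedge\cdots\wedge\frac{dx_j}{x_j}\wedge \pi^*\beta$, cut off by bump functions. The error terms introduced by the cutoffs are of lower pole order, so an induction on $j$ is needed to correct them. Once the splitting is in place, the long exact sequences break into short exact sequences, and induction on $j$ gives the stated decomposition.

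I expect the main obstacle to be exactly this point: showing that the connecting maps vanish, so that the sequences split canonically, at the self-intersection strata. There a closed form on $M_j$ must extend to a closed log form whose residues on the deeper strata $M_{j+1}$ are consistent. This is where the compatibility condition built into $\hat H$ enters, and where I would rely on \cite{miranda2021geometry} if the direct construction becomes unwieldy.
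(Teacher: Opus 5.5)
The paper gives no argument for this proposition beyond citing \cite{miranda2021geometry}; for the $b^k$ analogue it uses a single residue sequence and asserts that it splits. Your filtration by the number of log poles refines the same residue idea and is the right route. However, your description of the graded pieces is wrong. On $F^j$ the map $\res_j$ is onto \emph{all} of $\hat\Omega^{\bullet-j}(M_j)=\{\beta\in\Omega^{\bullet-j}(M_j):\sigma^*\beta=\mathrm{sgn}(\sigma)\beta \text{ for } \sigma\in S_j\}$. There is no compatibility condition along $M_{j+1}\to M_j$: near a point where $p>j$ branches meet, the residues on the different $j$-element subsets of branches are independent. Already for $Z_1=\{x_1=0\}$, $Z_2=\{x_2=0\}$ in the plane, the form $f_1\,\frac{dx_1}{x_1}+f_2\,\frac{dx_2}{x_2}$ has residues $f_1|_{Z_1}$ and $f_2|_{Z_2}$, which are unrelated at the crossing. ``Compatible'' in the statement refers only to the sign-twisted $S_j$-action. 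This is consistent with the paper's $b^k$ theorem, whose case $k=1$ has summands $H^{r-2}(Z_i\cap Z_j)$ with no constraint. If your $\hat\Omega$ carries a nonvacuous extra condition, then $\res_j$ does not land in it, and your $\hat H$ is not the group of \cite{miranda2021geometry}. Your expectation that this condition is what kills the connecting maps is therefore misplaced.

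The real gap is that the vanishing of the connecting maps, which is the main content, is left open, and the remedy you sketch is circular. For a generic cutoff, $d\tilde\beta$ is a closed element of $F^{j-1}$ representing $\delta[\beta]$. It can be corrected inside $F^{j-1}$ exactly when $\delta[\beta]=0$, and knowing $H(F^{j-1})$ inductively does not supply this. The missing idea is to choose the lift adapted to the branches, so that no error term ever appears. The normal bundle of a sheet of $M_j$ splits canonically into the normal lines of its $j$ branches. Choose the following on a tubular neighbourhood:
\begin{itemize}
\item functions $r_1,\dots,r_j\ge0$ with $r_i^2/x_i^2$ smooth and positive, so that $d\log r_i$ is a log form;
\item a retraction $\pi$ onto the sheet;
\item a cutoff $\psi$ depending only on $(r_1,\dots,r_j)$ and equal to $1$ near $r=0$.
\end{itemize}
Then $d\psi\wedge d\log r_1\wedge\cdots\wedge d\log r_j=0$. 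Set $L(\beta)=\psi\,d\log r_1\wedge\cdots\wedge d\log r_j\wedge\pi^*\beta$, averaged over the $S_j$-orderings and summed over the sheets of the immersed tubular neighbourhood. It satisfies $dL(\beta)=(-1)^jL(d\beta)$ and $\res_jL(\beta)=\beta$. This chain-level section splits each sequence $0\to F^{j-1}\to F^j\to\hat\Omega^{\bullet-j}(M_j)\to0$ as complexes, so all connecting maps vanish and induction on $j$ gives the decomposition. The deeper strata play no role: other branches crossing the neighbourhood near $M_{j+1}$ are harmless, because $\pi^*\beta$ is smooth and smooth forms are log forms.
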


We will denote by $\sm:H^r(T_ZM)\to H^r(M)$ the canonical projection, following \cite{miranda2021geometry}. We call $\sm[\alpha]$ the smooth part of the class $[\alpha]$. One way to describe the smooth part map is as follows. Assume $M$ is orientable; then

\[\sm:H^r(T_ZM)\to (H^{n-r}(M))^*\buildrel{\textup{Poincar\'e}}\over\simeq H^r(M)\]
\[[\alpha]\mapsto [[\mu]\mapsto \textup{pv}_Z\int_M\alpha\wedge\mu].\]

\medskip
\subsection{Geometric quantization of the log-symplectic manifolds}\label{ss:geometricquantizationlogsymlectic}

Assume $(M,Z)$ is a \emph{generalized normal crossing divisor} with $M,Z$ closed manifolds. Let $\omega$ be a symplectic form on the log-tangent bundle $T_ZM$ which we assume is prequantizable, that is $\sm[\omega]$ is an integral class. Let $\L\to M$ be a line bundle with $c_1(\L)=\sm[\omega]$. Choosing a compatible complex structure $J$ on $\A$, as in Subsection \ref{ss:geometricquantizationmethod}, we may define an algebroid-Dirac operator $D$ on an $\Cl(\A^*)$-module $S$. 

Let $r,s:\G\rightrightarrows M$ be any integration of the algebroid $\A$ which exists by Theorem \ref{prop:E-tangentintegrable}. This operator gives an index class $[D]\in K_0(\Psi^{-\infty}(\G))$. In \cite{sadegh2024fixed}, a canonical trace $\tr:\Psi^{-\infty}(\G)\to\mathbb{C}$ introduced, which gives our quantization:
\[Q(T_ZM,\omega):=\tr[D].\]

This trace is given as follows. We have the identification $\Psi^{-\infty}(\G)\simeq \Gamma(\delta\Lambda^{1/2})$ where 
\[\delta\Lambda^{1/2}:=r^*\Lambda^{1/2}\otimes s^*\Lambda^{1/2}\]
and $\Lambda$ is the density bundle $\Lambda=|\det T_Z^*M|\otimes\bC.$ Now the trace is given as the composition
\[\tr:\Gamma(\delta\Lambda^{1/2})\to\Gamma(\Lambda)\to \bC\]
where the first map is the restriction to the unit space of the groupoid and the second map is the Liouville volume introduced above.

\begin{remark}
    In \cite{sadegh2024fixed}, one assumes the components of $Z$ are embedded. However, the results extend to our case of the \emph{generalized normal crossing divisors} with minimal adjustments in the arguments.
\end{remark}

By the index formula \cite[Theorem 6.10 \& Corollary 8.6]{sadegh2024fixed} that this quantization computes as 
\[Q(T_ZM,\omega)=\text{pv}_Z\int_M{\Td^{\A}}(T_ZM)e^{\sm[\omega]},\]
where $\Td^{\A}$ denotes the algebroid version of the Todd class. This proves Theorem \ref{thm:logmanifoldquantizationformula}.

\begin{remark}
    Assume $(T_ZM,\omega)$ is a log-symplectic manifold, where $i:Z\to M$ restricted to components of $Z$ gives an embedding whose image is given by a defining function. This is the setting in \cite{lin2022log}. Our geometric quantization recovers their formula as follows. 

    In this case, one has $T_ZM\oplus\bR\simeq TM\oplus \bR$ (\cite[Theorem 2.8 ]{lin2022log}); one obtains a canonical $\spinc$-structure on $M$, and hence a Dirac operator $\slashed{D}$. 
    By twisting with the prequantum line bundle $\L$, one obtain the geometric quantization of \cite{lin2022log}:
    \[\textup{Ind}(\slashed{D}^{\L})=\int_M \hat{A}^{\A}(M)\Ch(\textup{det}^{1/2}_{\bC}(TM\oplus \bC))\Ch(\L)\]
    \[=\int_M \hat{A}^{\A}(T_ZM)\Ch(\textup{det}^{1/2}_{\bC}(T_ZM))\Ch(\L)\]
    Here one can show that $\hat{A}^{\A}(T_ZM)$ is the same as the algebroid version A-hat of $T_ZM$. Thus we obtain
    \[\textup{Ind}(\slashed{D})=\int_M\hat\Td(T_ZM)\Ch(L)\]
    which is exactly our formula.
\end{remark}
\medskip
\begin{remark}\label{rem:nonintegralquantization}
    When $(M,Z)$ is a normal crossing divisor, consisting of immersed but not embedded hypersurfaces, we have no reason to believe that the quantization $Q(T_ZM,\omega)$ is an integer. So, relating the general case to a Fredholm index might not even be possible.
\end{remark}

\section{$b^k$-symplectic Manifolds}\label{sec:bkmanifold}

\sloppy Let $(M,Z,x_j)$ be an $n$-manifold and with a defined normal crossing divisor, i.e. $Z=Z_1\sqcup\cdots\sqcup Z_m$ where  $Z_j$ is an embedded submanifold of $M$ and is given by the defining functions $x_j$. For $k\geq 1$ define the subsheaf $\tau\subset \Gamma(TM)$ by

\[\tau:=\{X\in \Gamma(TM): dx_i(X)\ \textup{vanishes to order}\ k\ \textup{along}\ Z_i,\ 1\leq i\leq m\}.\]

Near a point in the multi-intersection such as $x_1=\cdots=x_i=0$, the sheaf is generated by the vector fields

\[x_1^k\frac{\partial}{\partial x_1},\cdots,x_i^k\frac{\partial}{\partial x_i},\frac{\partial}{\partial y_{i+1}},\cdots,\frac{\partial}{\partial y_n}\]
where $(x_1,\cdots,x_i,y_{i+1},\cdots,y_n)$ is a local coordinate on $M$. One can easily check that $\tau$ is closed under the Lie bracket and is locally free of constant rank $n$. Hence we obtain an E-tangent bundle, an algebroid, denoted by

\[\xymatrix{\bk T_ZM\ar[r]^{\rho}\ar[rd]&TM\ar[d]\\
{}& M }\]

\begin{remark}\label{rem:jet-dependence}
    For $k>1$, unfortunately, the definition of the sheaf depends on the $(k-1)$-jet class of the defining function (see \cite{scott2013geometry}). So we will fix the defining functions throughout. 
\end{remark}

\begin{lemma}\label{lem:stablyisomorphic}
    One has an isomorphism of vector bundles
    \[\bk T_ZM\oplus\bR\simeq TM\oplus\bR.\]
    The choice of this isomorphism is independent up to homotopy.
\end{lemma}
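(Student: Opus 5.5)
We construct an explicit comparison map between $\bk T_ZM$ and $TM$ using the defining functions, and then stabilize it. Fix a Riemannian metric and a tubular neighbourhood $U_j$ of each $Z_j$ on which $x_j$ is a coordinate normal to $Z_j$. Choose cutoff functions $\phi_j$ supported in $U_j$ and equal to $1$ near $Z_j$. Consider the bundle map given by the anchor, $\rho:\bk T_ZM\to TM$. In local generators near a stratum $x_1=\cdots=x_i=0$ it sends
\[x_l^k\partial_{x_l}\mapsto x_l^k\partial_{x_l},\qquad \partial_{y}\mapsto\partial_y.\]
Its determinant is the product $\prod_l x_l^k$, so it degenerates exactly on $Z$. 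We treat the hypersurfaces one at a time. Since the construction is local along each $Z_j$ and the $x_j$ are independent at crossings, the modifications commute and can be done in product form.

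For a single defined hypersurface $Z_j$, the two bundles agree outside $U_j$. Inside $U_j$, both bundles split as a line bundle spanned respectively by $x_j^k\partial_{x_j}$ and by $\partial_{x_j}$, plus a common complement. The complement is the rest of $TM$ near the crossings; there it is the $\bk$-bundle of the remaining hypersurfaces, which is handled inductively. Both line bundles are trivial on $U_j$. So $\bk T_ZM$ is obtained from $TM$ by regluing a trivial line subbundle along the hypersurface $\partial U_j$ (or across $Z_j$) via the transition function $\operatorname{sign}(x_j)^k$ times a positive function. The isomorphism class is therefore changed by tensoring the normal line by the real line bundle $\mathcal{O}_j$ determined by the class of $Z_j$ if $k$ is odd, and by nothing if $k$ is even. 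This matches the known fact that for $k$ even $\bk T_ZM\cong TM$ outright.

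To absorb the twist, use the $\bR$ summand. On $U_j$ form the rank-$2$ bundle $\langle x_j^k\partial_{x_j}\rangle\oplus\bR$ and compare it with $\langle\partial_{x_j}\rangle\oplus\bR$. Define
\[
\Phi_j=\begin{pmatrix} x_j^k/r & -\psi_j\\ \psi_j & x_j^k/r\end{pmatrix},\qquad r=\sqrt{x_j^{2k}+\psi_j^2},
\]
where $\psi_j$ is chosen to be nonvanishing where $x_j=0$. This is a rotation matrix, invertible everywhere. It equals the identity, after rescaling by positive functions, where $\psi_j=0$ away from $Z_j$, so it glues with the anchor outside $U_j$. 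This step requires $\psi_j$ to vanish outside $U_j$ while being nonzero on $Z_j$. We therefore take $\psi_j=\phi_j$, and on the transition region $\{0<\phi_j<1\}$, where $x_j\ne0$, the matrix is still invertible.

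The main obstacle is the crossings. There all the $\Phi_j$ must act on a single shared $\bR$ summand, and we must check that they still give an invertible map. We handle this by composing the maps $\Phi_j$ sequentially. Each $\Phi_j$ is an isomorphism and acts only on the $\partial_{x_j}$-direction and the $\bR$ summand, so the composite is again an isomorphism. This uses only that the $x_j$ are part of a coordinate system at the crossings, which is exactly the defined normal crossing hypothesis.

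For independence up to homotopy, the data are the choices of metric, tubular neighbourhoods, cutoffs $\phi_j$, and the ordering of the composition. Each choice varies in a convex or connected set: positive functions, cutoffs, and the rotation direction, with the sign of $\psi_j$ fixed to be positive. The resulting isomorphisms therefore vary continuously, and hence are homotopic. Reordering the composition produces rotations in different coordinate planes that share the $\bR$ axis, and these are connected through $SO$-paths. The remaining dependence is on the defining functions $x_j$, which are fixed by Remark \ref{rem:jet-dependence}.
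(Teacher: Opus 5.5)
There is a genuine gap at the gluing step. It sits exactly where the hypothesis that $Z_j$ has a \emph{global} defining function has to be used.

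Where $\psi_j=0$ you have $r=|x_j|^k$, so your matrix is $\Phi_j=\operatorname{sign}(x_j)^k\,I$. For odd $k$ (including the log case $k=1$) this is $-I$ on $\{x_j<0\}$. In the same frames $(x_j^k\partial_{x_j},1)$ and $(\partial_{x_j},1)$, the map $\rho\oplus\mathrm{id}_{\bR}$ is $\mathrm{diag}(x_j^k,1)$, which after positive rescaling is $\mathrm{diag}(-1,1)$ on that side. So the two disagree on the $\bR$-summand, and your claim that $\Phi_j$ ``equals the identity after rescaling by positive functions'' and glues with the anchor is false there. No choice of $\psi_j$ can fix this. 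On a small ball meeting $Z_j$ away from the other hypersurfaces these frames are continuous, so every isomorphism has determinant of constant sign, whereas $\det(\rho\oplus\mathrm{id}_{\bR})=x_j^k$ changes sign.

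The repair is to glue with $\rho\oplus\operatorname{sign}(x_j)^k\,\mathrm{id}_{\bR}$ off $U_j$. More cleanly, use the single global map $(v,t)\mapsto\bigl(\rho(v)-t\,\psi_j\partial_{x_j},\ \psi_j\tfrac{dx_j}{x_j^k}(v)+t\,x_j^k\bigr)$, whose matrix $\left(\begin{smallmatrix}x_j^k&-\psi_j\\ \psi_j&x_j^k\end{smallmatrix}\right)$ has determinant $x_j^{2k}+\psi_j^2>0$. Either version makes sense only because $\operatorname{sign}(x_j)$ is locally constant on all of $M\setminus Z_j$. Your argument uses $x_j$ only as a normal coordinate on $U_j$, which is a warning sign. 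For the non-separating circle $Z=\{0\}\times S^1\subset T^2$ with $k=1$ it would run verbatim. Yet $\det{}^{b}T_ZT^2$ is non-trivial while $\det TT^2$ is trivial, so those bundles are not even stably isomorphic.

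The same misconception shows up in your heuristic paragraph: the odd-$k$ twist is not ``tensoring the normal line by $\mathcal{O}_j$''. For a defined hypersurface $\mathcal{O}_j$ is trivial, so that description would give $\bk T_ZM\cong TM$ for odd $k$ as well. This is false: for $S^2$ with the equator and $k=1$, ${}^{b}TS^2$ has Euler number $0$ while $TS^2$ has Euler number $2$. The twist is really a clutching by $\mathrm{diag}(-1,1)$ on the plane spanned by the normal line and $\bR$. The trivial summand is needed precisely so that you can multiply it by $-1$ over the whole region $\{x_j<0\}$. That turns the clutching into $-I\in SO(2)$, which your rotation then connects to the identity. (Separately, for $\Phi_j$ to be a rotation its off-diagonal entries should be $\mp\psi_j/r$.)

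With this correction the rest of your plan works. Compose one hypersurface at a time through the intermediate bundles $\bk T_{Z_{j+1}\cup\cdots\cup Z_m}M\oplus\bR$. Replace $\partial_{x_j}$ by a vector field $V_j$ with $dx_j(V_j)=1$ that is tangent to the level sets of the other $x_i$ near $Z_i$. The argument is then in the same spirit as the paper's proof, which defers to the construction of Lin--Loizides--Sjamaar--Song (their Equation (5), with $\rho\tfrac{df}{f}$ replaced by $\rho\tfrac{df}{f^k}$).
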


\begin{proof}
    The proof of this lemma is essentially the same as the proof of \cite[Theorem 2.8]{lin2022log}. One needs to change the $\rho\frac{df}{f}$  in \cite[Equation (5)]{lin2022log} to $\rho \frac{df}{f^k}$.
\end{proof}

\subsection{Liouville Volume}\label{ss:liouvillevolume} We will denote by $\bk\Omega_Z^r(M)$ the space of forms $\Gamma(\Lambda^r(\bk T_Z^*M))$.

Near an intersection of hypersurfaces like $x_1=\cdots=x_p=0$, one can write a Liouville-Laurent expansion for $\alpha\in\bk\Omega^r_Z(M)$ as 
\[\alpha=\beta+\sum_{1\leq i\leq p}\sum_{1\leq j\leq k}\frac{dx_i}{x_i^j}\alpha^1_{i,j}+\sum_{1\leq i_1<i_2\leq p}\sum_{1\leq j_1,j_2\leq k }\frac{dx_{i_1}}{x_{i_1}^{j_1}}\frac{dx_{i_2}}{x_{i_2}^{j_2}}\alpha_{I,J}^2+\cdots\]
Here, in the second sum,  $I=(i_1,i_2)$ and $J=(j_1,j_2)$. $\beta$ is a smooth $r$-form, and $\alpha^q_{I,J}$ is a smooth $(r-q)$-form.
Although this expression is not unique but the restricted forms $\alpha^q_{I,J}|_{x_1=\cdots=x_p=0}$ are unique. If $\alpha$ is a closed form, then one can take all $\alpha^q_{I,J}$ and $\beta$ to be closed.
Hence we obtain a \emph{residue} map
\begin{equation}\label{eq:residue}
    \res:\bk\Omega_Z(M)\to (\Omega^{r-1}(Z))^k\oplus \bigoplus_{i<j}(\Omega^{r-2}(Z_i\cap Z_j))^{k^2}\oplus\cdots
\end{equation}
\[\alpha\mapsto (\alpha^1_{i,j},\alpha_{I,J}^2\cdots).\]
We denote by $\res_{I,J}^q$ the component of the residue map that gives the coefficient $\alpha_{I,J}^p$.

When $\alpha$ is a top-degree form, similar to the log case, the integral of $\alpha$ can diverge due to the singularity of the form.  Let $M_{\epsilon}$ denotes the complement of $\bigcup_{i=1}^m x_i^{-1}(-\epsilon,\epsilon)$. One has the following:

\begin{proposition}
    There exists a polynomial $P_{\alpha}(t)$  of degree at most $n(k-1)$ for which
    \[\lim_{\epsilon\to 0}(P_{\alpha}(\frac{1}{\epsilon})-\int_{M_{\epsilon}}\alpha)=0.\]
    
\end{proposition}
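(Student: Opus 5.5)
We prove the statement with a partition of unity adapted to the strata of $Z$, reducing it to explicit one-variable integrals whose divergent parts are polynomials in $1/\epsilon$.

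\textbf{Localization.} Cover $M$ by finitely many coordinate charts $U_a$ of two kinds. Charts of the first kind meet no $Z_i$. Charts of the second kind are centered at a point of a stratum $x_1=\cdots=x_p=0$ and meet only $Z_1,\dots,Z_p$. On a chart of the second kind we take coordinates $(x_1,\dots,x_p,y_{p+1},\dots,y_n)$, using the fixed defining functions $x_1,\dots,x_p$ as the first coordinates, and we take the chart to be a box $(-\delta,\delta)^p\times V$. Choose a subordinate partition of unity $\{\phi_a\}$. Because $\phi_a\alpha$ is again a $b^k$-form, it suffices to prove the claim for each $\phi_a\alpha$ and add the resulting polynomials.

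\textbf{Charts of the first kind.} Here $\phi_a\alpha$ is smooth. Moreover $\int_{M_\epsilon}\phi_a\alpha$ is constant once $\epsilon$ is small, since the support of $\phi_a$ misses a neighborhood of $Z$. So the constant polynomial works.

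\textbf{Charts of the second kind.} Using the $b^k$-cotangent frame $dx_i/x_i^k$ together with the $dy$'s, write
\[
\phi_a\alpha=\frac{f(x,y)}{x_1^k\cdots x_p^k}\,dx_1\cdots dx_p\,dy,
\]
with $f$ smooth and compactly supported in the box. Within this chart, $M_\epsilon$ is the region $\{|x_i|\ge\epsilon,\ 1\le i\le p\}$. Taylor expand $f$ in the $x$-variables to order $k-1$ in each $x_i$ separately. By iterated one-variable Taylor expansion with integral remainder, this writes $f$ as a sum of terms
\[
x^{\beta}\,g_{\beta,S}(x_S,y),
\]
where $S\subset\{1,\dots,p\}$ is the set of indices in which we keep the remainder. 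For $i\in S$ we have $\beta_i=k$, and $g_{\beta,S}$ is smooth. For $i\notin S$ we have $\beta_i<k$, and $g_{\beta,S}$ does not depend on $x_i$. The integral of each term over $M_\epsilon$ then factors, and Fubini can be applied after fixing the variables in $S$:
\begin{itemize}
\item An index $i\in S$ has integrand that is regular in $x_i$. The cut $|x_i|\ge\epsilon$ changes its integral by $O(\epsilon)$, which tends to zero.
\item An index $i\notin S$ contributes $\int_{\epsilon\le|x_i|<\delta}x_i^{\beta_i-k}\,dx_i$, multiplied by a cutoff-dependent factor. Two details need care here. First, the cutoff can be absorbed by choosing $\phi_a$ to be a product, constant in each $x_i$ near $0$; this is possible after shrinking the charts. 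Second, the boundary terms at $\delta$ are constants.
\end{itemize}
The one-dimensional integral $\int_{\epsilon\le|x|<\delta}x^{-m}\,dx$ with $1\le m\le k$ is an explicit polynomial in $1/\epsilon$ of degree $m-1\le k-1$, plus a constant. When $m=1$ there is no $\log\epsilon$ term, because the principal value over the symmetric region cancels it.

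\textbf{Conclusion.} Multiplying these contributions over the at most $n$ divergent directions (in fact at most $p\le n$) gives a polynomial in $1/\epsilon$ of degree at most $n(k-1)$, plus an $o(1)$ error. Summing over charts gives $P_\alpha$.

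\textbf{Main obstacle.} The hardest step is the coupling between directions. The remainder terms in one variable still carry singular factors $x_j^{\beta_j-k}$ in the other variables. Each product term must therefore be handled by induction on $p$: integrate out the regular variables first, then apply the one-variable estimate uniformly in the parameters. This shows that the mixed error terms remain $o(1)$ and do not generate cross terms like $\epsilon^{-a}\cdot O(\epsilon)$ that fail to vanish. I would arrange this by using the one-variable statement with smooth dependence on parameters. That statement gives a polynomial in $1/\epsilon$ with smooth coefficients, plus $\epsilon\cdot(\text{smooth})$, and the induction then goes through.
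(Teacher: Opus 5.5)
Your overall route is the paper's. The paper, following Scott, integrates the terms $\frac{dx_{i_1}}{x_{i_1}^{j_1}}\cdots\frac{dx_{i_p}}{x_{i_p}^{j_p}}\alpha^p_{I,J}$ of the Laurent expansion and counts the power $j_1+\cdots+j_p-p\le n(k-1)$; your iterated Taylor expansion of $f$ is a coordinate version of that expansion. The localization, the one-variable integrals and the symmetric cancellation of $\log\epsilon$ are fine.

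The gap is exactly at the step you flag as the main obstacle, and your resolution of it is wrong. Knowing only that a regular factor equals $C+O(\epsilon)$ does not make its product with a divergent polynomial in $1/\epsilon$ equal to a polynomial plus $o(1)$. Your claim that the mixed error terms ``remain $o(1)$'' is false. Take $n=p=k=2$ and $f=x_1^2\phi(x_1)\phi(x_2)$, with $\phi$ an even bump supported in $(-\delta,\delta)$, $\phi(0)=1$ and $\int\phi=I$. Your decomposition produces the term $\beta=(2,0)$, $S=\{1\}$, $g_{\beta,S}=\phi(x_1)$, whose integral over $M_\epsilon$ is
\[\Big(I-2\epsilon+O(\epsilon^3)\Big)\Big(\frac{2}{\epsilon}-\frac{2}{\delta}\Big)=\frac{2I}{\epsilon}-\frac{2I}{\delta}-4+O(\epsilon).\]
The cross term $(-2\epsilon)\cdot(2/\epsilon)=-4$ does not vanish; it is part of $P_\alpha(0)$, i.e.\ of the Liouville volume. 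If the singular factor has a pole of order $d\ge 2$, an error known only to be $O(\epsilon)$ is not even known to stay bounded after multiplication.

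What is missing is that the regular factor is a smooth function of $\epsilon$ up to $\epsilon=0$, not merely $C+O(\epsilon)$. In your decomposition $g_{\beta,S}$ does not depend on $x_i$ for $i\notin S$, so each term factors exactly, with no induction on $p$ needed:
\[\prod_{i\notin S}\Big(\int_{\epsilon\le|x_i|<\delta}x_i^{\beta_i-k}\,dx_i\Big)\cdot F(\epsilon),\qquad F(\epsilon)=\int_{\{|x_i|\ge\epsilon,\ i\in S\}}g_{\beta,S}\,dx_S\,dy.\]
Expanding the indicator $\prod_{i\in S}\big(1-\mathbf{1}_{\{|x_i|<\epsilon\}}\big)$ and rescaling $x_i=\epsilon s_i$ on each slab gives
\[F(\epsilon)=\sum_{T\subseteq S}(-\epsilon)^{|T|}\int_{[-1,1]^T}\int g_{\beta,S}(\epsilon s_T,x_{S\setminus T},y)\,ds_T\,dx_{S\setminus T}\,dy,\]
which is smooth on $[0,\delta)$. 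Now Taylor-expand $F$ to order $d=\sum_{i\notin S}(k-\beta_i-1)$ and multiply out. Each product $\epsilon^{j}\epsilon^{-d'}$ with $d'\le d$ is either a monomial in $1/\epsilon$ of degree at most $d$, which must be kept in $P_\alpha$, or $O(\epsilon)$; the Taylor remainder also contributes $O(\epsilon)$. So the bound $\deg P_\alpha\le p(k-1)\le n(k-1)$ survives, but because the mixed terms are absorbed into $P_\alpha$, not because they vanish. The paper's one-line sketch is silent on this point too, since its coefficients $\alpha^p_{I,J}$ may still depend on the $x_i$.

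Finally, your remark about choosing $\phi_a$ to be a product that is constant in each $x_i$ near $0$ is unnecessary. The cutoff is already inside $f$ before you Taylor-expand.
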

\begin{proof}
The proof goes similarly to that of \cite[Theorem 5.3]{scott2013geometry} with some adjustments. This time we have to integrate terms of the form    
\[\frac{dx_{i_1}}{x_{i_1}^{j_1}}\cdots\frac{dx_{i_p}}{x_{i_p}^{j_p}}\alpha_{I,J}^p\] where $1\leq j_1,\cdots, j_p\leq k$, and $p\leq n$ (as there are at most $n$ intersections. After integrating, we obtain a term of the form 
\[\beta (\frac{1}{\epsilon})^{j_1+\cdots +j_p-p}.\]
So the largest power of $\frac{1}{\epsilon}$ is at most $nk-n=n(k-1)$.
\end{proof}

One defines the Liouville volume of $\alpha$ as the ``finite part'' of the integral above given by
\[\textup{fp}_Z\int_M\alpha :=P_{\alpha}(0).\]

\begin{remark}
    When $k=1$, the finite part of the integral coincides with the principal value integral given in Subsection \ref{ss:principalvalue}.
\end{remark}

\begin{example}\label{ex:sphereliouvillevolume}
    Let $M=S^2$ be the unit sphere in $\bR^3$ with the hypersurface $Z_a=z^{-1}(a)$, where $z$ is the projection onto the $z$-axis and $a\in (-1,1)$. For $k\geq 1$, define the $b^k$-form for the pair $(S^2,Z_a)$ by
    \[\omega_{a,k}:=\frac{dz\wedge d\theta}{2\pi(z-a)^k}.\]
    One has 
    \[\textup{fp}_{Z_{a}}\int_{S^2}\omega_{a,k}=\frac{1}{(k-1)!}\frac{d^{k-1}}{dx^{k-1}}|_{x=a}\big(\ln|\frac{1-x}{1+x}|\big)\]
  
\end{example}

\subsection{Algebroid cohomology of $\bk TM$}\label{ss:cohomologybktangentbundle}
First, one can define the ``smooth part'' map
\[H^r(\bk T_ZM)\to H^r(M)\] by the composition

\[\sm:H^r(\bk TM)\to (H^{n-r}(M))^*\buildrel{\textup{Poincar\'e}}\over\simeq H^r(M)\]
\[[\alpha]\mapsto [[\mu]\mapsto \textup{fp}_Z\int_M\alpha\wedge\mu].\]
Here we used a version of Poincar\'e duality which applies when $M$ is orientable. If $M$ is not orientable one can replace $(H^{n-r}(M))^*$ with the cohomology with coefficient in the orientation bundle $(H^{n-r}(M,\textup{or}_M))^*$ and its isomorphism with $H^r(M)$.

The algebroid cohomology of the $b^k$-tangent bundle has a similar expression to the cohomology of the log-tangent bundle (Proposition \ref{prop:cohomologylogtangent}):
\begin{theorem}
One has an isomorphism 
\[H^r(\bk T_ZM)\simeq H^r(M)\oplus \bigoplus_{i}(H^{r-1}(Z_i))^k\oplus \bigoplus_{i<j}(H^{r-2}(Z_i\cap Z_j))^{k^2}\oplus \cdots.\]    
\end{theorem}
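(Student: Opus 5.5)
The plan is to prove the isomorphism by an inductive Mazzeo--Melrose/Scott type argument, peeling off one hypersurface at a time and using the short exact sequence of complexes given by the residue map.

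\textbf{Step 1: the single-hypersurface case.} Suppose first that $Z$ is one embedded hypersurface $Z_1$ with defining function $x_1$. Fix a tubular neighbourhood $Z_1\times(-\epsilon,\epsilon)$ in which $x_1$ is the normal coordinate. Using the Laurent expansion of Subsection \ref{ss:liouvillevolume}, write every $\alpha\in\bk\Omega^r_Z(M)$ as
\[\alpha=\beta+\sum_{j=1}^k\frac{dx_1}{x_1^j}\wedge\pi^*\alpha_j,\]
where $\beta$ is smooth and $\alpha_j\in\Omega^{r-1}(Z_1)$. To make this decomposition canonical, one replaces $\alpha^1_{1,j}$ by the pullback of its restriction, at the cost of changing $\beta$ by a smooth form; this uses that $\frac{dx_1}{x_1^j}\wedge x_1\gamma$ is again of lower order. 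The differential then acts componentwise: $d(\frac{dx_1}{x_1^j}\wedge\pi^*\alpha_j)=-\frac{dx_1}{x_1^j}\wedge\pi^*d\alpha_j$. This yields a short exact sequence of complexes
\[0\to\Omega^\bullet(M)\to\bk\Omega^\bullet_Z(M)\xrightarrow{\res}(\Omega^{\bullet-1}(Z_1))^k\to0,\]
and the map $\alpha_j\mapsto\phi(x_1)\frac{dx_1}{x_1^j}\wedge\pi^*\alpha_j$, with $\phi$ a bump function equal to $1$ near $Z_1$, gives a splitting. The connecting homomorphism sends $[\alpha_j]$ to $[d\phi\,x_1^{-j}dx_1\wedge\pi^*\alpha_j]$, and this vanishes since $d\phi\wedge dx_1=0$ (as $\phi$ depends only on $x_1$). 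The long exact sequence therefore splits, giving $H^r(\bk T_ZM)\simeq H^r(M)\oplus H^{r-1}(Z_1)^k$. This is Scott's theorem, and the $H^r(M)$ summand is identified with $\sm$.

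\textbf{Step 2: induction on the number of components.} For the normal crossing case, induct on $m$. Let $Z'=Z_1\sqcup\cdots\sqcup Z_{m-1}$ and consider the residue along $Z_m$ only:
\[0\to\bk\Omega^\bullet_{Z'}(M)\to\bk\Omega^\bullet_Z(M)\to(\bk\Omega^{\bullet-1}_{Z'\cap Z_m}(Z_m))^k\to0.\]
Here $Z'\cap Z_m$ is a defined normal crossing divisor in $Z_m$, with defining functions $x_i|_{Z_m}$, because the crossings are transverse. Splitting and vanishing of the connecting map follow as in Step 1 with the bump function $\phi(x_m)$, since the local generators $x_i^k\partial_{x_i}$ decouple in product coordinates. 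The inductive hypothesis, applied to both $M$ and $Z_m$, then gives
\[H^r(\bk T_ZM)\simeq H^r(\bk T_{Z'}M)\oplus H^{r-1}(\bk T_{Z'\cap Z_m}Z_m)^k.\]
Expanding both terms produces exactly the summands $H^{r-q}(Z_{i_1}\cap\cdots\cap Z_{i_q})^{k^q}$.

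\textbf{Main obstacle.} The delicate step is well-definedness of the residue when several hypersurfaces meet. One needs a tubular neighbourhood of $Z_m$ in which the other $x_i$ are pulled back from $Z_m$, that is, $x_i=\pi^*(x_i|_{Z_m})$, so that $\frac{dx_i}{x_i^j}$ restricts correctly and the residue lands in $b^k$-forms on $Z_m$ with respect to the restricted defining functions. Remark \ref{rem:jet-dependence} says this must respect the $(k-1)$-jets of the $x_i$. I would first try to construct the tubular neighbourhood as the flow of a vector field in $\tau$ that is transverse to $Z_m$ and satisfies $dx_i(V)=0$ for $i\neq m$. Such a field exists locally by transversality, and the local pieces are glued with a partition of unity; this preserves the conditions because they are linear.
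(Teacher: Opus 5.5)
Your argument is correct, apart from two small repairs to the collar construction given in the second paragraph, but it is organized differently from the paper's.

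\textbf{The paper's route.} The paper writes a single residue sequence $0\to\Omega^\bullet(M)\to\bk\Omega^\bullet_Z(M)\to\bigoplus_I(\Omega^{\bullet-|I|}(Z_I))^{k^{|I|}}\to0$. In it, all iterated residues $\alpha^q_{I,J}$ are extracted at once as smooth forms on the strata, with their well-definedness deferred to Scott and Miranda--Scott. It then splits the cohomology sequence by $\sm$. The vanishing of the connecting homomorphism is implicit there: it follows because $\sm$ is a left inverse of $H^r(M)\to H^r(\bk T_ZM)$, so that map is injective.

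\textbf{Your route.} You remove one hypersurface at a time and let the residue along $Z_m$ take values in $b^k$-forms on $(Z_m,Z'\cap Z_m)$. You split at the chain level by $s\colon\gamma\mapsto\phi(x_m)\,x_m^{-j}\,dx_m\wedge\pi_m^*\gamma$. The codimension-$q$ residues then come out of the induction rather than having to be defined globally on the strata.

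\textbf{Comparison.} Your route is purely topological. It needs no Stokes theorem for finite-part integrals, whereas the paper uses one implicitly to know that $\sm$ is well defined on cohomology. It also makes explicit the one geometric input: a collar of $Z_m$ adapted to the other defining functions. The paper's route is shorter and gives the canonical splitting by $\sm$, which is the one used later (prequantization, Lemma~\ref{l:bk-symplecticreduction}).

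Your splitting is not that one. The projection $[\alpha]\mapsto[\alpha-s(\res\alpha)]$ depends on $\phi$ and in general differs from $\sm$. For example, take $S^2$ with $Z=\{z=0\}$, $k=2$, and a bump $0\le\phi\le1$. The form $\phi(z)\frac{dz\wedge d\theta}{2\pi z^2}$ equals $s(\gamma)$, so your projection sends its class to $0$. Its finite-part integral, however, is $-2+\int_{-1}^{1}(\phi-1)z^{-2}\,dz\neq0$, so $\sm$ of it is nonzero. Your remark that the $H^r(M)$ summand ``is identified with $\sm$'' should therefore be weakened to ``$\sm$ restricts to the identity on $H^r(M)$''. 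The isomorphism itself is unaffected.

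\textbf{Repairs to the collar.} First, $V$ cannot be a section of $\tau$, since every section of $\tau$ is tangent to $Z_m$. What you want is $dx_m(V)=1$ together with $dx_i(V)=0$.

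Second, the condition $dx_i(V)=0$ can only be imposed on a neighbourhood of $Z_i$. At points of $Z_m$ away from $Z_i$, the covector $dx_i$ may be proportional to $dx_m$, and transversality gives local solvability only at points of $Z_i\cap Z_m$.

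This weaker condition is enough. For small $\epsilon$ the flow gives $x_i=\pi_m^*(x_i|_{Z_m})$ on $\pi_m^{-1}$ of a neighbourhood of $Z_i\cap Z_m$. On the rest of the collar, $|x_i|$ is bounded below. The gluing is legitimate because the conditions are affine, hence preserved by partitions of unity.
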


\begin{proof}
    One can argue as \cite[Proposition 4.2]{scott2013geometry} and \cite[Proposition 28, Theorem 29]{miranda2021geometry} to obtains an exact sequence
    \[0\to\Omega^r(M)\to\bk\Omega^r_{ Z}(M)\buildrel{\res}\over\to  (\Omega^{r-1}(Z))^k\oplus \bigoplus_{i<j}(\Omega^{r-2}(Z_i\cap Z_j))^{k^2}\oplus\cdots\to 0.\]
    This exact sequence gives an exact sequence in the cohomology level:
    \[0\to H^r(M)\to H^r(\bk T_{Z}M)\to  \bigoplus_{i}(H^{r-1}(Z_i))^k\oplus \bigoplus_{i<j}(H^{r-2}(Z_i\cap Z_j))^{k^2}\oplus \cdots\to 0.\]
    This sequence splits by the smooth part map $
    \sm:H^r(\bk T_ZM)\to H^r(M)$. From this, the theorem follows.

\end{proof}


\subsection{Geometric quantization of $b^k$-symplectic manifolds}\label{ss:geomtricquantizationb^ksymplectic}
Let $\bk T_ZM$ be a $b^k$-tangent bundle associated with a defined normal crossing divisor $(M,Z,x_j)$. Assume $\omega$ is a symplectic form on $\bk T_ZM$ which is prequantizable, i.e., $\sm[\omega]$ is an integral cohomology class; so we can pick a line bundle $\L\to M$ with first Chern class $\sm[\omega]$.  Similar to Subsection \ref{ss:geometricquantizationlogsymlectic}, choosing a compatible complex structure on $\bk T_ZM$, one can define an algebroid-Dirac operator $D$. 

Again by Theorem \ref{prop:E-tangentintegrable}, there is a groupoid $r,s:\G\rightrightarrows M$ which integrates $\bk T_ZM$. Hence, by the prescription of Subsection \ref{ss:geometricquantizationmethod}, we obtain an index class $[D]\in K_0(\Psi^{-\infty}(\G))$.

Here we introduce a trace on $\Psi^{-\infty}(\G)$. Denote by $\Lambda$ the density bundle $|\det \bk T_Z^*M|\otimes \bC$ and define a vector bundle over $\G$ by 
\[\delta\Lambda^{1/2}:=r^*\Lambda^{1/2}\otimes s^*\Lambda^{1/2}.\]
Define the trace as the composition
\[\tr:\Gamma(\delta\Lambda^{1/2})\to \Gamma(\Lambda)\buildrel{\tau}\over\to\mathbb{C}\]
where $\tau$ is the Liouville integration of $b^k$-forms introduced as above. 

\begin{lemma}
    The map $\tr$ is a trace. 
\end{lemma}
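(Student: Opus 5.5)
We must show $\tr(a*b)=\tr(b*a)$ for $a,b\in\Psi^{-\infty}(\G)\simeq\Gamma_c(\delta\Lambda^{1/2})$. The plan is to reduce this to a fiberwise Fubini/change of variables, using invariance of the regularization under the groupoid inversion. First write the convolution restricted to the unit space:
\[(a*b)(x)=\int_{\G_x}a(\gamma^{-1})\,b(\gamma),\]
where $\G_x=s^{-1}(x)$. Along $\G_x$ the half densities $r^*\Lambda^{1/2}\otimes s^*\Lambda^{1/2}$ pair to give a density in $\gamma$ times $\Lambda_x$. Thus $a*b|_M$ is a section of $\Lambda$, i.e. a (smooth) $b^k$-density. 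Similarly $(b*a)(y)=\int_{\G_y}b(\eta^{-1})a(\eta)$.

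Formally, both $\tau(a*b|_M)$ and $\tau(b*a|_M)$ equal
\[\int_{\G}a(\gamma^{-1})b(\gamma),\]
viewed as an integral of the density $a\cdot(b\circ i)$ on $\G$: the map $\gamma\mapsto\gamma^{-1}$ exchanges the two fibrations $r$ and $s$. The main step is to make this rigorous when $\tau$ is the finite-part integral rather than an honest integral.

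To do this I would use the exhaustion $M_\epsilon$ and define $\G_\epsilon:=s^{-1}(M_\epsilon)$. Then
\[\int_{M_\epsilon}(a*b)=\int_{\G_\epsilon}F,\qquad \int_{M_\epsilon}(b*a)=\int_{r^{-1}(M_\epsilon)}F,\]
with $F=a\cdot (b\circ\iota)$, an honest density on $\G$ away from $s^{-1}(Z)\cup r^{-1}(Z)$. Here the key observation is that $Z$ is saturated, i.e. a union of orbits: its components are invariant submanifolds, since the anchor is tangent to the $Z_i$. Moreover the defining functions $x_i$ satisfy $r^*x_i=s^*x_i\cdot u_i$, with $u_i$ a smooth positive function equal to $1$ on $s^{-1}(Z_i)$ to order $k$. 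This holds for source-connected integrations, since $x_i^k\partial_{x_i}$ preserves $x_i$ up to a factor $e^{t x_i^{k-1}}$-type flow.

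The remaining difference of integrals is therefore over the symmetric difference $s^{-1}(M_\epsilon)\,\triangle\, r^{-1}(M_\epsilon)$. Since $a,b$ are compactly supported, $u_i$ is bounded near $1$ on the supports. Hence this region consists of points where $|s^*x_i|\in(\epsilon(1-O(\epsilon^{k-1}\cdot)),\epsilon)$, or more precisely where $|s^*x_i|$ lies between $\epsilon$ and $\epsilon/u_i$.

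I then expand the difference in Laurent-type asymptotics in $\epsilon$. As in the proof of the proposition above, both sides are polynomials in $1/\epsilon$ plus $o(1)$. The difference is an integral of $\frac{dx_i}{x_i^j}\wedge(\cdots)$ over shells of width $O(\epsilon\,(u_i-1))$, with $u_i-1=O(x_i^{k-1})$ in the $b^k$ case and with the log correction for $j=1$. I would show that its constant term vanishes. For $j=1$, $\int_\epsilon^{\epsilon/u}dx/x=-\log u\to0$ because $u\to1$ on $Z$. For $j\ge 2$, the contribution is homogeneous of a strictly nonzero power in $\epsilon$ or $o(1)$. Consequently $P_{a*b}(0)=P_{b*a}(0)$.

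The main obstacle is precisely this last estimate. It requires controlling the relation $r^*x_i/s^*x_i$ to order $k$, which is where the dependence on the $(k-1)$-jet of $x_i$ (Remark \ref{rem:jet-dependence}) enters, and handling the corners at the crossings. If the direct estimate is messy, my fallback is a different route. I would first note that $\tr$ vanishes on commutators $[a,b]$ supported in $M\setminus Z$, where it is the ordinary trace. Then, using a cutoff partition near $Z$, I would reduce to the local model $\G=(\bR,\ x^k\partial_x)$-action groupoid times a pair groupoid. There the identity can be checked by an explicit computation in which $r^*x=s^*x$ exactly up to the flow formula.
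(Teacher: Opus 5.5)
Your reduction, comparing $\int_{s^{-1}(M_\epsilon)}F$ with $\int_{r^{-1}(M_\epsilon)}F$, is the same as the paper's reduction to $\tau(r_*f)=\tau(s_*f)$. The paper settles that by adapting \cite[Proposition 8.1]{sadegh2024fixed}. The gap is that the shell estimate you use to conclude is false.

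In the local model the isotropy parameter $t$ acts by the flow of $x^k\partial_x$. For $k\ge 2$ one gets $u=r^*x/s^*x=1+t\,x^{k-1}+\cdots$, so $u-1$ vanishes only to order $k-1$. The shell at $x=\epsilon$ therefore has width of order $t\epsilon^{k}$, while the top pole $dx/x^k$ has size $\epsilon^{-k}$ there, so the $j=k$ term is of order $\epsilon^{0}$. Concretely, in the coordinate $\sigma=-1/((k-1)x^{k-1})$ one has $d\sigma=dx/x^k$ and the flow is a translation. The one-sided shell integral is then $\approx \pm t\,G(t,0)$, with $G$ the coefficient of $dx/x^k$. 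After integrating in $t$ this is a nonzero constant for general $a,b$; in the model it is essentially $\int t\,a(-t,0)\,b(t,0)\,dt$. So your claim that the $j\ge 2$ terms carry a strictly nonzero power of $\epsilon$ fails exactly at $j=k$. For $k=1$ the argument breaks even earlier: on the $b$-groupoid $u=e^{t}$ along $s^{-1}(Z)$ (your own factor $e^{tx^{k-1}}$ at $k=1$), so $-\log u=-t\not\to 0$.

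The missing idea is the cancellation between the two shells at $x_i=\epsilon$ and $x_i=-\epsilon$. It uses that $\tau$ is the Liouville volume of $b^k$-forms integrated against a fixed orientation of $M$. Compare Example \ref{ex:sphereliouvillevolume}, where $k=1$ gives $\ln|\frac{1-a}{1+a}|$ rather than a logarithmic divergence. With that convention the two $O(1)$ contributions are equal and opposite.

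A clean way to organize this away from the crossings is a flux argument. Along the flow of a $b^k$-vector field $X$, the difference of the cut-off integrals is a time integral of the flux of (a flow-pullback of) $\iota_X\alpha$ through $\partial M_\epsilon$. Restricting to $\{x_i=\pm\epsilon\}$ kills every $dx_i/x_i^j$ term, so the integrand has no pole in $x_i$ and has the same limit from both sides. The two boundary pieces carry opposite induced orientations, so the flux tends to $0$.

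Your argument never uses that $Z_i$ is two-sided, and it treats $a*b|_M$ as a $b^k$-density. For odd $k$ the two shells then add instead of cancelling. The one-sided computation you describe is exactly the one that produces Melrose's nonzero $b$-trace defect on a manifold with boundary, so it cannot by itself give the trace property. Your local-model fallback works only if the explicit computation exhibits this two-sided pairing, carried out separately in each $x_i$ at the corners.
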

The trace property of $\tr$ is equivalent to showing that $\tau(r_*f)=\tau(s_*f)$ where $f$ is a section of $\delta \Lambda=r^*\Lambda\otimes s^*\Lambda$ and $r_*,s_*:\Gamma(\delta \Lambda)\to \Gamma(\Lambda)$ are the pushforward maps given by integration along the range and source fibers
along the unit space. The proof of this lemma is very similar to the proof of \cite[Proposition 8.1]{sadegh2024fixed}, with small adjustments. 

Now one defines the geometric quantization by 
\[Q(\bk T_ZM,\omega)=\tr[D].\]
From \cite[Theorem 6.10]{sadegh2024fixed}, we obtain the following formula:
\[Q(\bk T_ZM,\omega)=\text{fp}_Z\int_M \Td^{\A}(\bk T_ZM)e^{\sm[\omega]}.\]

This proves  Theorem \ref{thm:b^kmanifoldquantizationformula}. 
\begin{example}\label{ex:prequantizedsphere}
    Consider the pair $(S^2,Z_a)$ as in Example \ref{ex:sphereliouvillevolume} with the defining function $z-a:S^2\to \bR$. We introduced the 2-form $\omega_k$ on the $\bk TS^2$ given by 
    \[\omega_{a,k}=\frac{dz\wedge d\theta}{2\pi(z-a)^k}.\]
    This is a symplectic form. So we deduce that $\omega_{a,k}$ is prequantizable if and only if 
    \begin{equation}\label{eq:prequantizedsphere}
        \frac{1}{(k-1)!}\frac{d^{k-1}}{dx^{k-1}}|_{x=a}\big(\ln|\frac{1-x}{1+x}|\big)\in\bZ.
    \end{equation}
\end{example}

\subsection{Hamiltonian action on $b^k$-symplectic manifolds.} \label{ss:bkHamiltonianquantization} Let $G$ be a compact connected Lie group with Lie algebra $\g$. 
Let $(\bk T_ZM,\omega)$ be a $b^k$-symplectic manifold. Assume $G$ acts on $M$ so that each connected component of $Z$, and all defining functions are kept invariant, and $\omega$ is preserved. For $X\in \g$, denote by $X_M$ (respectively, $X_Z$) the vector field of the infinitesimal action of $X$ on $M$ (respectively, Z).

\begin{definition}
    For each $1\leq j\leq k$, we define the $j$-th modular weight of the component $Z_i$ of $Z$, as the function $c_j:Z_i\to \g^*$ given by the equation
    \[\langle c_j,X\rangle=\iota_{X_Z}\res^1_{i,j}(\omega),\]
    where $\res_{i,j}^1$ is the component of the residue \eqref{eq:residue} that corresponds to $\alpha_{i,j}^1$. 
\end{definition}

\begin{remark}\label{rem:infinitesimalactionbk}
    Since the defining functions $x_i$ are $G$-invariant, for any $X\in\g$, $X_M.x_i=0$;. Hence $X_M\in \Gamma(\bk T_ZM)$ for every $k$.
\end{remark}

\begin{lemma}
    The modular weights are locally constant functions with values in the annihilator of the commutator, $[\g,\g]^0$.
\end{lemma}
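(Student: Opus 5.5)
The plan is to extract the statement from the $G$-invariance and closedness of $\omega$, using the Liouville--Laurent expansion near $Z_i$. Work near a component $Z_i$ in a tubular neighborhood where the defining function $x_i$ is a coordinate. Since $\omega$ is $d_{\A}$-closed, we may write
\[\omega=\beta+\sum_{j=1}^k\frac{dx_i}{x_i^j}\wedge\alpha^1_{i,j}\]
with $\beta,\alpha^1_{i,j}$ smooth and closed, and with $\res^1_{i,j}(\omega)=\alpha^1_{i,j}|_{Z_i}$ well defined. By averaging over the compact group $G$ (which preserves $x_i$, $Z_i$ and $\omega$), we may take $\beta$ and $\alpha^1_{i,j}$ to be $G$-invariant. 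The restrictions to $Z_i$ are then unaffected, because the residues are unique.

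\textbf{Local constancy.} Put $a_j:=\res^1_{i,j}(\omega)\in\Omega^1(Z_i)$. This form is closed and $G$-invariant. Cartan's formula gives
\[d\,\iota_{X_Z}a_j=\mathcal{L}_{X_Z}a_j-\iota_{X_Z}da_j=0.\]
So $\langle c_j,X\rangle$ is locally constant for every $X\in\g$, and therefore $c_j$ is locally constant.

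\textbf{Vanishing on $[\g,\g]$.} Let $X,Y\in\g$. Using $\iota_{[X_Z,Y_Z]}=[\mathcal{L}_{X_Z},\iota_{Y_Z}]$ we get
\[\iota_{[X_Z,Y_Z]}a_j=\mathcal{L}_{X_Z}\iota_{Y_Z}a_j-\iota_{Y_Z}\mathcal{L}_{X_Z}a_j.\]
The first term is zero because $\iota_{Y_Z}a_j$ is locally constant, and the second is zero by invariance. Since $[X,Y]_Z=\pm[X_Z,Y_Z]$ (the sign depends on convention), $\langle c_j,[X,Y]\rangle=0$, so $c_j$ takes values in $[\g,\g]^0$.

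\textbf{Main obstacle.} The real work is justifying that $a_j$ is closed and $G$-invariant. I would argue as follows. Under the exact sequence of the cohomology theorem, $d_{\A}$ commutes with the residue map, i.e.\ $\res(d_{\A}\omega)=d\,\res(\omega)$ componentwise, so closedness of $\omega$ gives closedness of every residue. For invariance, $G$ preserves $x_i$, so $G$ preserves each $dx_i/x_i^j$, and uniqueness of the restricted coefficients then forces $g^*a_j=a_j$. In a multi-intersection chart one must check that the higher-order terms $\alpha^q_{I,J}$ with $q\ge 2$ do not contaminate the $q=1$ coefficient restricted to $Z_i$. They do not, because they carry additional factors $dx_{i'}/x_{i'}^{j'}$ with $i'\neq i$, which are independent in $\bk T_Z^*M$. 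Finally, by Remark \ref{rem:infinitesimalactionbk}, $X_M$ is a section of $\bk T_ZM$ tangent to $Z_i$, so the contraction $\iota_{X_Z}a_j$ makes sense.
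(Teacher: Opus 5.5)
Your proof is correct and follows the paper's argument. Local constancy comes from Cartan's formula applied to the closed, $G$-invariant residue $\res^1_{i,j}(\omega)$, and your identity $\iota_{[X_Z,Y_Z]}=[\mathcal{L}_{X_Z},\iota_{Y_Z}]$ is the infinitesimal form of the coadjoint invariance the paper cites for the $[\g,\g]^0$ statement; your last paragraph just spells out the closedness and invariance of the residue that the paper takes for granted.

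One caution: for $j<k$ the restricted coefficients are not actually unique. For example, $\frac{dx}{x^2}\wedge d(x\sin y)=\frac{dx}{x}\wedge\cos y\,dy$ gives two expansions with closed coefficients whose $dx/x$-coefficients restrict to $\cos y\,dy$ and to $0$ on $\{x=0\}$. So what really secures the invariance of $a_j$ is your averaging step, i.e.\ computing the residues from a $G$-invariant closed expansion, and not the uniqueness claim that both you and the paper invoke.
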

\begin{proof}
    Since the residue function commutes with the de Rham differential and $\res_j(\omega)$ is an invariant form, using Cartan's formula, we obtain
    \[d\langle c_j,X\rangle=d\iota_{X_Z}\res_j(\omega)=-\iota_{X_Z}d\res_j(\omega)\]
    but $\res_j(\omega)$ is closed. So $c_j$ is locally constant. Due to the invariance of $c_j$ under the coadjoint action, it immediately follows that the weights are valued in $[\g,\g]^0$.
\end{proof}
\medskip
\begin{definition}
    Define $Z_{\neq0}$ to be the union of the components, at least one of whose modular weights is nonzero. For any component $W$ of $Z$, we define the \emph{modular degree} as 
    \[n_W=\max_i\{i:c_i|_W\neq0\}\]
    and define the \emph{leading modular weight} of $W$ as $c_W:=c_{n_W}$. 
    
    \medskip
     We say the $G$-Hamiltonian data $(M,Z,x_i,\omega,\mu)$ is of \emph{odd leading type} if for every component $W$ of $Z_{\neq 0}$, the modular degree $n_W$ is an odd integer.
\end{definition}

\medskip

\medskip
\begin{definition}
    We call the action of $G$ on the data $(M,Z,x_i,\omega)$ Hamiltonian if there exists an equivariant \emph{moment map} $\mu: M\backslash Z_{\neq0}\to \g^*$ such that
    \begin{enumerate}
        \item For any $X\in\g$
            \[\iota_{X_M}\omega=-d\langle \mu, X\rangle;\]
        \item The function
        \begin{equation}\label{eq:mumubar}
            \bar\mu:=\mu+\sum_{i=1}^m\sum_{j=2}^k\frac{c_j}{(j-1)x_i^{j-1}}- \sum_{i=1}^m c_1\ln|x_i|
        \end{equation}
            extends to a smooth map. 
    \end{enumerate}

    We call the Hamiltonian data $(M,Z,x_i,\omega,\mu)$ prequantizable if there exists an equivariant prequantum line bundle $\L\to M$, whose equivariant first Chern class $c_1^G(\L)$ equals $\sm[\omega]-\bar{\mu}\in H^2_G(M)$. 
    
\end{definition}

Since there is a $G$-action we may define the geometric quantization of a prequantized Hamiltonian $b^k$-symplectic manifold as an equivariant smooth function in $C^{\infty}(G)^G$, as follows. 

Let $\G\rightrightarrows M$ be an integration of the Lie algebroid $\bk T_ZM$. The action of the Lie group extends to an action on $\G$ via bisections. The algebroid Dirac operator $D$ constructed as before can be made $G$-equivariant (one chooses the complex structure on the algebroid equivariant, which is always possible). Then $D$ gives a family of honest Dirac operators on the source-fibers of $\G$ which is $G$-equivariant. This gives an index class in the equivariant K-theory $[D]\in K_0^G(\Psi^{-\infty}(\G))$. Pairing with the trace, gives the desired quantization:
    \[Q^G(\bk T_ZM,\omega):=\langle \tr,[D] \rangle\]
For the details of this construction, see \cite{sadegh2024fixed}. This quantization can be computed topologically (c.f. \cite[Theorem 6.10]{sadegh2024fixed}):
\begin{theorem}
For $g\in G$, we have
\[Q^G(\bk T_ZM,\omega,\mu)(g)=\textup{fp}_{Z^g}\int_{M^g}\frac{\Td^{\A}(\bk T_{Z^g}M^g)e^{\sm[\omega]-\bar{\mu}}}{\textup{ch}_g(\lambda_{-1}\nu^{0,1})}.\]
Here $\nu$ is the normal bundle of the fix point submanifold $M^g$ and $\lambda_{-1}\nu^{0,1}$ is the formal alternating sum $\sum (-1)^i\Lambda^i\nu^{0,1}$
\end{theorem}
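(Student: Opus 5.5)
The plan is to deduce the formula from the equivariant groupoid index theorem of \cite[Theorem 6.10]{sadegh2024fixed}, applied to the integrating groupoid $\G\rightrightarrows M$ of $\A=\bk T_ZM$. Two inputs have to be matched: the algebroid fixed-point data, and the trace $\tr$ built from the finite-part integral $\textup{fp}_Z\int_M$.

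First I would reduce to the fixed locus. For $g\in G$ in a maximal torus, $M^g$ is a closed submanifold. Since $G$ preserves each $Z_i$ and each defining function $x_i$, the $x_i$ restrict to defining functions of $Z^g:=Z\cap M^g$ in $M^g$. The $Z_i\cap M^g$ meet transversally in $M^g$, because the linearized action at a fixed point preserves the coordinate hyperplanes and we may choose invariant adapted coordinates. This makes $(M^g,Z^g,x_i|_{M^g})$ a defined normal crossing divisor. Moreover $\A|_{M^g}$ splits $g$-equivariantly as $\bk T_{Z^g}M^g\oplus\nu$: near $M^g$ the generators $x_i^k\partial_{x_i}$ lie in the $g$-fixed directions, and the normal directions are ordinary. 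The complex structure $J$ can be chosen $G$-equivariant, so $\nu$ inherits a complex structure and $\nu^{0,1}$ makes sense.

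Next I would apply the Lefschetz-type formula of \cite{sadegh2024fixed}. The equivariant pairing $\langle\tr,[D]\rangle(g)$ is computed by a heat-kernel or Getzler rescaling argument for the sourcewise Dirac family. Its local index density localizes near $M^g$ to
\[\Td^{\A}(\bk T_{Z^g}M^g)\,\textup{ch}_g(\L)\big/\textup{ch}_g(\lambda_{-1}\nu^{0,1}),\]
and this density is then integrated against the trace. Because the trace is the finite part of a singular integral, I need the localization to commute with taking the finite part. I would show this by computing $\int_{M_\epsilon}$ of the $g$-twisted kernel restricted to the units. The tubular integration over $\nu$ then yields the same Laurent polynomial in $1/\epsilon$ as $\int_{M^g_\epsilon}$ of the fixed-point density. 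Here I use that the $\epsilon$-neighbourhoods $x_i^{-1}(-\epsilon,\epsilon)$ are $G$-invariant and cut $M^g$ exactly in $M^g_\epsilon$. Taking constant terms gives $\textup{fp}_{Z^g}\int_{M^g}$.

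Finally I would identify the equivariant Chern character of $\L$. The prequantization condition says $c_1^G(\L)=\sm[\omega]-\bar\mu$ in $H^2_G(M)$. Evaluating the equivariant form at $X$ with $g=\exp X$ and restricting to $M^g$ gives $\textup{ch}_g(\L)=e^{\sm[\omega]-\bar\mu}$. It is essential to use the smooth $\bar\mu$ from \eqref{eq:mumubar} rather than the singular $\mu$. Some care is needed because $\sm[\omega]$ is only a cohomology class. I would pick an honest connection on $\L$ whose curvature represents it, and note that the finite-part integral depends only on the class: the Liouville volume of exact $b^k$-forms is zero on closed $M$, by Stokes on $M_\epsilon$ with boundary terms polynomial in $1/\epsilon$ with no constant term.

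The main obstacle is the step where localization commutes with the finite part. Near points of $Z^g$ where strata intersect, the normal directions to $M^g$ may mix with the directions transverse to $Z$. I would handle this with invariant local coordinates adapted to both structures, in which the $b^k$-singular factors $dx_i/x_i^j$ are $g$-invariant and factor out of the normal Gaussian integral.
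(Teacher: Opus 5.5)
Your overall route is the paper's. Its proof of this statement is nothing more than an appeal to \cite[Theorem 6.10]{sadegh2024fixed} for a groupoid integrating $\bk T_ZM$, and your fixed-locus reduction and localization remarks unpack that appeal.

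The step that fails in the stated generality is your auxiliary claim that $\textup{fp}_Z\int_M d\beta=0$ for every $b^k$-form $\beta$ of degree $n-1$ ``by Stokes on $M_\epsilon$''. The claim is correct when the components of $Z$ are disjoint: on $\{x_i=\pm\epsilon\}$ only the smooth part of $\beta$ survives, and the two faces cancel as $\epsilon\to0$. It is also correct for $k=1$, where the corner terms tend to zero. At a crossing with $k\ge2$, however, the face $\{x_1=\pm\epsilon,\ |x_2|\ge\epsilon\}$ still sees $dx_2/x_2^{j}$. Its integral is of size $\epsilon^{1-j}$ and multiplies the $(j-1)$-st Taylor coefficient in $x_1$, and for odd Taylor order the two faces add instead of cancelling. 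Concretely, take $k=2$, coordinates $x_1,x_2$ equal to the defining functions near a crossing point, $\chi$ an even bump equal to $1$ near $0$, and $\beta=x_1\chi(x_1)\chi(x_2)\,dx_2/x_2^{2}$. Then
\[\int_{M_\epsilon}d\beta=\Big(\int_{|x_1|\ge\epsilon}\partial_{x_1}\big(x_1\chi(x_1)\big)\,dx_1\Big)\Big(\int_{|x_2|\ge\epsilon}\frac{\chi(x_2)}{x_2^{2}}\,dx_2\Big)=(-2\epsilon)\Big(\frac{2}{\epsilon}+O(1)\Big)\longrightarrow-4,\]
so $\textup{fp}_Z\int_M d\beta=-4$. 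Your step ``the finite-part integral depends only on the class'' is what lets you replace the Chern--Weil forms of the connections defining $D$ by arbitrary representatives of $\Td^{\A}$ and $\sm[\omega]-\bar\mu$. That step therefore breaks for crossing hypersurfaces when $k\ge2$.

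The same example also undermines the input you import. Near the crossing, $\beta=\iota_X\nu$ with $X=x_1^{3}\chi(x_1)\chi(x_2)\partial_{x_1}\in\Gamma(\bk T_ZM)$ and $\nu=\frac{dx_1}{x_1^2}\wedge\frac{dx_2}{x_2^2}$. Hence $\tau(d_{\A}\iota_X\nu)=-4\neq0$. The trace property $\tau(r_*F)=\tau(s_*F)$ forces $\tau$ to be invariant under the bisections $\exp(tX)$, so that property fails here. With it go the well-definedness of $\langle\tr,[D]\rangle$ on K-theory and the $t$-independence behind the heat-kernel step you take from \cite{sadegh2024fixed}. The paper's one-line citation does not address this either.

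So your argument is sound, and identical to the paper's, for disjoint hypersurfaces or $k=1$. For crossing hypersurfaces with $k\ge2$ you need a regularization that annihilates exact forms. One candidate is $\alpha\mapsto\big(\int_M\prod_i|x_i|^{z}\alpha\big)\big|_{z=0}$:
\begin{itemize}
\item each one-variable factor $|x_i|^{z}x_i^{-a}$ has poles only at odd integers $z$, so this is holomorphic at $z=0$;
\item it agrees with $\textup{fp}_Z$ when the $Z_i$ are disjoint;
\item $\int_M\prod_j|x_j|^{z}d\beta=-z\sum_i\int_M\prod_j|x_j|^{z}\frac{dx_i}{x_i}\wedge\beta$ vanishes at $z=0$.
\end{itemize}
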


\begin{example}\label{ex:spherehamiltonian}
    Continuing on the sphere example (Examples \ref{ex:sphereliouvillevolume} and \ref{ex:prequantizedsphere}), consider the $S^1$ action on the sphere by rotating about the $z$-axis with generating vector field $X_M=-2\pi\frac{\partial}{\partial \theta}$, then modular weights of $Z_a$ computes as 
    \[\langle c_j,X\rangle=\iota_{X_M}\res_j(\omega)=\begin{cases}
        -1&j=k\\
        0&j\neq k
    \end{cases}.\]
    
    When $k=1$, one has the moment maps $\mu_{1,b}(z,\theta)=-\ln(|z-a|)+b$ where $b$ is any real number. This case was studied in \cite{lin2022log}.

    Assume $k>1$, then one has the moment maps $\mu_{k,b}(z,\theta)=\frac{1}{(k-1)(z-a)^{k-1}}+b$ for $b\in\bR$. Assume $\sm[\omega]$ is integral, i.e., the equation \eqref{eq:prequantizedsphere} to hold. So we have the integer
    \[q:=\frac{1}{(k-1)!}\frac{d^{k-1}}{dx^{k-1}}|_{x=a}\big(\ln|\frac{1-x}{1+x}|\big)=-\frac{1}{k-1}(\frac{1}{(1-a)^{k-1}}-\frac{(-1)^{k-1}}{(1+a)^{k-1}}).\]
    For any two integers $n_1,n_2\in\bZ$ with $n_2-n_1=q$, one can find a unique $b$ with 
    \[\mu_{k,b}|_{z=1}=n_1, \ \ \ \mu_{k,b}|_{z=-1}=n_2.\]
    With this choice, the image of the pair $(\sm[\omega],\bar\mu)\in H^2_G(M)$ will be the equivariant first Chern class of an equivariant line bundle. Then the quantization formula above computes:
    \[Q^G(\bk T_{Z_a}S^2,\omega_{k,a},\mu_{k,b})(t)=\frac{t^{n_1}}{1-t}-\frac{t^{n_2}}{1-t}\] 
    for all $t\in S^1$.

\end{example}

\subsection{Geometric quantization as a Fredholm index}\label{ss:quantizationFredhol}
We will show the quantization above is an index of an honest Dirac operator; hence giving a positive answer to the conjecture in \cite[Remark 1.1]{guillemin2021geometric}.

By Lemma \ref{lem:stablyisomorphic}, we conclude
\begin{enumerate}

    \item All rational Pontryagin classes of $TM$ and $\bk T_ZM$ are the same.

    \item Since $TM\oplus \bC$ (i.e. $TM\oplus\bR^2$) and $\bk T_ZM\oplus\bC$ are isomorphic, and $\bk T_ZM$ is symplectic, $TM\oplus\bC$ carries an almost complex structure that is unique up to homotopy.
\end{enumerate}

Since $TM$ is stably almost complex, it carries a canonical $\spinc$ structure whose determinant line bundle is $\det_{\bC}(TM\oplus\bC)\simeq\det_{\bC}(\bk T_ZM)$.

Let $\slashed{D}^{\L}$ be the twisted Dirac operator on $\slashed{S}\otimes \L$. Then we have

$$\textup{Ind}(\slashed{D}^{\L})=\int_M\hat{A}(TM) \Ch(\textup{det}^{1/2}_{\bC}(TM\oplus\bC))\Ch(\L)$$
$$=\int_M \hat{A}(\bk T_ZM)\Ch(\textup{det}^{1/2}_{\bC}(\bk T_ZM))\Ch(\L).$$
$$=\int_M \hat{A}^{\A}(\bk T_ZM)\Ch(\textup{det}^{1/2}_{\bC}(\bk T_ZM))\Ch(\L).$$

\medskip
\begin{lemma}
    The algebroid Todd class 
$\Td^{\A}(\bk T_ZM)$ is represented by a smooth form, and 
\[[\Td^{\A}(\bk T_ZM)]=[\Td(TM\oplus\bC)].\]

\end{lemma}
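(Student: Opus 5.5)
The plan is to work at the level of Chern--Weil theory for the algebroid $\A=\bk T_ZM$, and to reduce the statement to an equality of characteristic classes of ordinary complex vector bundles over $M$.

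\textbf{Step 1: a good algebroid connection.} I first choose a convenient algebroid connection on $\A$, regarded as a complex vector bundle via the compatible $J$. The key observation is that $\A$ is an honest vector bundle over $M$. So it admits ordinary $TM$-connections $\nabla'$, and any such $\nabla'$ induces an $\A$-connection by composing with the anchor:
\[
\nabla_X:=\nabla'_{\rho(X)}.
\]
The curvature of this $\A$-connection is the pullback $\rho^*F^{\nabla'}$ of the ordinary curvature, which is a smooth form.

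Next I use the standard fact from algebroid Chern--Weil theory (\cite[Section 2]{sadegh2024fixed}) that the algebroid characteristic classes do not depend on the choice of $\A$-connection. The Chern--Simons transgression form between two $\A$-connections is an $\A$-form, and $d_\A$ of it is the difference of the two characteristic forms. Hence
\[
\Td^{\A}(\A)=\rho^*\Td(\A),
\]
where $\Td(\A)$ is the ordinary Todd form of the complex bundle $\A\to M$ computed with $\nabla'$. In particular, $\Td^{\A}(\A)$ is represented by a smooth form. Here "smooth" means lying in the image of the inclusion $\Omega(M)\to \bk\Omega_Z(M)$, which is the first map of the exact sequence in the cohomology theorem; its smooth part is just the ordinary class $[\Td(\A)]\in H^*(M)$.

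\textbf{Step 2: comparing with $TM\oplus\bC$.} Lemma \ref{lem:stablyisomorphic} gives $\A\oplus\bR\simeq TM\oplus\bR$, hence $\A\oplus\bC\simeq TM\oplus\bC$ as real bundles. By point (2) preceding the lemma, the almost complex structure on $TM\oplus\bC$ is the one transported from $\A\oplus\bC$, with $\A$ carrying $J$ and $\bC$ its standard structure. Since the Todd class is multiplicative and $\Td(\bC)=1$, we get
\[
[\Td(\A)]=[\Td(\A\oplus\bC)]=[\Td(TM\oplus\bC)].
\]

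\textbf{Main obstacle.} I expect the delicate point to be the meaning of "represented by a smooth form" together with the identification of the smooth part. One must check that the transgression between the chosen connection and the Levi-Civita-induced algebroid connection (the one used to define $D$) is exact in $\bk\Omega_Z$. Then, under $\sm$ defined via $\textup{fp}_Z\int_M$, a genuinely smooth form pairs with smooth forms by the ordinary integral, since the finite-part integral of a smooth form is its usual integral. A second, secondary point is the homotopy-uniqueness of the complex structure in Step 2. Any two choices of the stable isomorphism are homotopic, so they induce the same Chern classes; I would handle this by invoking the homotopy-independence clause of Lemma \ref{lem:stablyisomorphic}.
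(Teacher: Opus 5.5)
Your argument is correct. It shares the paper's core idea: build the $\A$-connection out of an ordinary connection through the anchor so that the algebroid characteristic form is smooth, then invoke Lemma \ref{lem:stablyisomorphic}. The execution, however, differs in two respects.

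\textbf{How the paper argues.} It factors $\Td=\hat{A}\cdot\Ch(\det^{1/2}_{\bC})$. It matches the $\hat{A}$ factors through equality of rational Pontryagin classes, and the determinant factors through the stable complex isomorphism. It obtains smoothness of $\hat{A}^{\A}$ by ``pulling back a connection on $TM$ via the anchor'' and extending an identity by continuity from the open dense set where $\rho$ is invertible.

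\textbf{How you argue.} You take a complex connection $\nabla'$ on the bundle $\A$ itself and set $\nabla_X=\nabla'_{\rho(X)}$. Because $\rho$ preserves brackets, the curvature is exactly $\rho^*F^{\nabla'}$, so $\Td^{\A}(\A)=\rho^*\Td(\nabla')$ holds on all of $M$ with no continuity step. Multiplicativity of $\Td$ on $\A\oplus\bC\simeq TM\oplus\bC$, with the transported complex structure, then finishes.

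\textbf{What each route buys.} Your route gains rigor and generality.
\begin{itemize}
\item It produces an honest, globally defined $\A$-connection on $\A$. By contrast, a connection on $TM$ transported to $\bk T_ZM$ through $\rho^{-1}$ off $Z$ need not extend across $Z$, and the paper's continuity remark leaves this point implicit.
\item It shows that all algebroid Chern--Weil classes of any vector bundle factor through $\rho^*\colon H^*(M)\to H^*(\A)$. Moreover $\sm\circ\rho^*=\mathrm{id}$, since the finite-part integral of a smooth form is its ordinary integral.
\end{itemize}
The paper's factorization has the advantage of already matching the $\hat{A}\cdot\Ch(\det^{1/2})$ shape of the $\spinc$ index formula used in the surrounding computation.

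The points you flag as obstacles are not gaps. Connection-independence of the class is the standard transgression argument you already invoke. Any choice of stable isomorphism gives the same Todd class, because the complex structure on $TM\oplus\bC$ is defined by transport.
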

\begin{proof}
In the second equality, we used the equality of the A-hat classes, due to the equality of the Pontryagin classes. The third equality follows from the following claim. We claim that the algebroid version of the A-hat class $\hat{A}^{\A}(\bk T_ZM)$ has the same algebroid cohomology class as that of $\hat{A}( TM)$ (hence $\hat{A}(\bk T_ZM)$). Note that the choice of the algebroid connection does not change the algebroid cohomology class $\hat{A}^{\A}(\bk T_ZM)$. So by choosing a connection on $TM$ and pulling it back to $\bk T_ZM$ via the anchor map, we may define the algebroid $\hat{A}^{\A}$ form. Since $\rho$ is an isomorphism, on an open and dense set we will have the equality $\hat{A}^{\A}(\bk T_ZM)=\hat{A}(TM)$, which extends to the entire $M$ by continuity. 

 Hence we have
\[\Td(TM\oplus\bC)=\hat{A}(TM) \Ch(\textup{det}^{1/2}_{\bC}(TM\oplus\bC))=\hat{A}(\bk T_ZM)\Ch(\textup{det}^{1/2}_{\bC}(\bk T_ZM))\]
\[=\hat{A}^{\A}(\bk T_ZM)\Ch(\textup{det}^{1/2}_{\bC}(\bk T_ZM))=\Td^{\A}(\bk T_ZM)\]

and thus the algebroid Todd class of $\bk T_ZM$ is actually smooth.
\end{proof}

Now, going to the quantization formula we obtain:

\[Q^G(\bk T_ZM,\omega,\mu)(g)=\textup{fp}_{Z^g}\int_{M^g}\frac{\Td^{\A}(\bk T_{Z^g}M^g)e^{\sm[\omega]-\bar{\mu}}}{\textup{ch}_g(\lambda_{-1}\nu^{0,1})}\]
\[=\int_{M^g}\frac{\Td(\bk T_{Z^g}M^g\oplus \bC)\Ch_g(L)}{\textup{ch}_g(\lambda_{-1}\nu^{0,1})}=\textup{Ind}(\slashed{D}^{\L})(g).\]

\subsection{Quantization commutes with reduction: $[Q,R]=0$}\label{ss:[Q,R]=0}

 Assume $(M,Z,x_i,\omega,\mu)$ is a $G$-Hamiltonian $b^k$-symplectic manifold, such that $0$ is a regular value for $\mu$ and $G$ acts freely on $\mu^{-1}(0)$. Thus $\mu^{-1}(0)$ and $M_0:=\mu^{-1}(0)/G$ are smooth manifolds and one has a principal $G$-bundle $p:\mu^{-1}(0)\to M_0$.

Similarly, for every component $Z_i$ of $Z$, considered as a submanifold of $M$, the quotient $\tilde Z_i:=(Z_i\cap \mu^{-1}(0))/G$ is smooth for which the function $x_i$ descend to a defining function $\tilde{x}_i$. Put $Z_0=\bigsqcup_i\tilde Z_i$.

\begin{lemma}\label{l:bk-symplecticreduction}
    The pair $(M_0,Z_0,\tilde{x}_i)$ is a defined normal crossing divisor. There is a unique symplectic form $\omega_0$ on the corresponding $b^k$-tangent bundle $\bk T_{Z_0}M_0$ with the property
    \[p^*\omega_0=\iota^*\omega\]
    where $\iota:\mu^{-1}(0)\to M$ is the inclusion.

    If $\L\to M$ is a $G$-equivariant prequantum line bundle, then the quotient $\L_0=\L|_{\mu^{-1}(0)}/G$ is a prequantum line bundle for $(\bk T_{Z_0}M_0,\omega_0)$.
\end{lemma}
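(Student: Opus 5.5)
The argument is the usual Marsden--Weinstein reduction, carried out inside the algebroid $\A=\bk T_ZM$ rather than $TM$, followed by a check that everything descends.

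\textbf{Step 1: the divisor.} Since $0$ is a regular value of $\mu$ (equivalently of $\bar\mu$ near $Z$, by \eqref{eq:mumubar}), $\mu^{-1}(0)$ is a $G$-invariant submanifold. I first show it is transverse to every stratum $Z_{i_1}\cap\cdots\cap Z_{i_p}$ it meets. For this I use $\iota_{X_M}\omega=-d\langle\mu,X\rangle$ together with Remark~\ref{rem:infinitesimalactionbk}: the fundamental fields $X_M$ are sections of $\A$, and near a stratum they are tangent to the stratum. So $d\mu$ restricted to the stratum is surjective wherever $d\mu$ is, because $\omega$ is nondegenerate on $\A$ and $\ker\rho$ is spanned by the $x_i^k\partial_{x_i}$ directions, which pair trivially with the $G$-orbits.

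Consequently $Z_i\cap\mu^{-1}(0)$ are embedded hypersurfaces of $\mu^{-1}(0)$ meeting with normal crossings. They are cut out by the restrictions $x_i|_{\mu^{-1}(0)}$, and these have nonvanishing differential because $dx_i$ is independent of $d\mu$ by the transversality just shown. Freeness of the action then gives smooth quotients, and since the $x_i$ are $G$-invariant they descend to defining functions $\tilde x_i$. Hence $(M_0,Z_0,\tilde x_i)$ is a defined normal crossing divisor.

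\textbf{Step 2: the algebroid and the form.} Set $C=\mu^{-1}(0)$. The restricted sheaf $\tau|_C$ of sections of $\A|_C$ tangent to $C$ defines a subalgebroid $\A_C$, and $\A_C$ equals the $b^k$-tangent bundle of $(C,Z\cap C)$; this follows from the local normal form $x_1^k\partial_{x_1},\dots$ and transversality. The orbit directions $\g_C\subset\A_C$ form an ideal. I will show that
\[
\A_C/\g_C\cong p^*\,\bk T_{Z_0}M_0 ,
\]
by comparing local generators in slice coordinates adapted to the free action, in which the $x_i$ are among the coordinates.

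On the form side, $\A_C=\ker(d\mu)$ is the $\omega$-orthogonal complement of $\g_C$ in $\A|_C$, again because $\iota_{X_M}\omega=-d\langle\mu,X\rangle$. Therefore $\iota^*\omega$ is $G$-basic, and its kernel is exactly $\g_C$. It descends to a nondegenerate form $\omega_0$ on $\bk T_{Z_0}M_0$. This $\omega_0$ is closed because $p^*$ is injective on algebroid forms and commutes with $d_{\A}$. Uniqueness holds because $p$ is a surjective submersion, so $p^*$ is injective.

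\textbf{Step 3: prequantization.} $\L_0=\L|_C/G$ is a line bundle by freeness. It remains to show $c_1(\L_0)=\sm[\omega_0]$. The plan is to compute with equivariant cohomology:
\[
H^2_G(C)\cong H^2(M_0),
\]
and the restriction of $c_1^G(\L)=\sm[\omega]-\bar\mu$ to $C$ maps to $c_1(\L_0)$. So I must show that the image of $\sm[\omega]-\bar\mu$ equals $\sm[\omega_0]$. Using the Cartan model, $\bar\mu$ differs from $\mu=0$ on $C$ only by the singular correction terms in \eqref{eq:mumubar}. Those terms are precisely what is needed to turn the equivariant extension of $\omega$ into its smooth part, via the residue splitting of $H^2(\bk T_ZM)$.

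This last identification is the main obstacle: showing that the smooth-part map commutes with reduction, i.e.\ $\sm[\omega_0]$ is the Kirwan image of $\sm[\omega]-\bar\mu$. My first attempt would be to characterize $\sm$ by the Liouville-volume pairing. I would then check, with test forms supported near each stratum, that $\textup{fp}$-integrals over $M_0$ lift to $\textup{fp}$-integrals over $C$ along the fibres of $p$. This uses that the cutoff sets $x_i^{-1}(-\epsilon,\epsilon)$ are $G$-invariant, so the finite-part polynomials are compatible with fibre integration.
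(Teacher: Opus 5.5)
Steps 1 and 2 follow the paper's argument in slightly different packaging. You match local generators in slice coordinates, where the paper descends the anchor and compares sheaves. They are fine, and you also spell out uniqueness, which the paper leaves implicit. One small correction in Step 1: what makes $d\mu_m(T_mN)=\g^*$ is that $T_mN=\rho(\A_m)$ and that $d_{\A}\mu_m\colon\A_m\to\g^*$ is onto. The latter holds because freeness makes $X\mapsto X_M(m)\in\A_m$ injective and $\omega_m$ is nondegenerate. The fact that $\ker\rho$ pairs trivially with the orbits is automatic and is not what gives it.

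The genuine gap is Step 3, the last assertion of the lemma. You leave it as a plan, and the plan as stated would not close it, for two reasons.
\begin{enumerate}
\item Transporting $\textup{fp}$-pairings between $M_0$ and $C$ along $p$ at best yields $\sm_C[\iota^*\omega]=p^*\sm[\omega_0]$ in $H^2(C)$, where $\sm_C$ is the smooth-part map of $(C,Z\cap C)$. Nothing in the plan relates this to $\sm[\omega]$, which is defined by Poincar\'e duality on $M$.
\item An identity in $H^2(C)$ is too weak anyway, since $p^*\colon H^2(M_0)\to H^2(C)$ need not be injective; for $G=S^1$ it kills the Euler class of $C\to M_0$. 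The comparison must be made in $H^2_G(C)\cong H^2(M_0)$, i.e.\ with basic representatives, and that is where the moment map enters.
\end{enumerate}

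The paper closes the step with two ingredients.
\begin{enumerate}
\item Since $C$ misses $Z_{\neq0}$, the correction $\bar\mu-\mu$ can be made to vanish near $C$. Split off the singular terms of $\omega$ along $Z_{\neq0}$ with cutoffs supported near $Z_{\neq0}$; then the matching correction in \eqref{eq:mumubar} is cut off as well. This is the paper's ``$\mu=\bar\mu$ near $\mu^{-1}(0)$'', and it is what your remark about the correction terms is reaching for. With $\mu=0$ on $C$, it makes $c_1^G(\L)|_C$ represented by the basic form $\iota^*\sm(\omega)$.
\item The smooth part is compatible with $\iota$ and $p$. The singular terms that survive near $C$ come from components whose modular weights all vanish. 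Using $G$-invariant tubular projections adapted to $C$, they restrict to basic forms on $C$. Because $\tilde x_i\circ p=x_i\circ\iota$, they descend to the corresponding singular terms of $\omega_0$. Together with $p^*\omega_0=\iota^*\omega$, this gives $\iota^*\sm(\omega)=p^*\sm(\omega_0)$ as basic forms, hence $c_1(\L_0)=\sm[\omega_0]$.
\end{enumerate}

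Your $\textup{fp}$/fibre-integration argument with $G$-invariant cutoffs is the right tool only for the final check. That check is that the descended singular terms pair to zero under $\textup{fp}_{Z_0}\int_{M_0}$ against closed forms, so that $\omega_0$ minus them really represents $\sm[\omega_0]$.
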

\begin{proof}
Note that $\mu^{-1}(0)$ never meets the set $Z_{\neq0}$. Hence $\mu$ is smooth near $\mu^{-1}(0)$. For any stratum $N:=W_1\cap\cdots \cap W_p$ where $W_i$ is a component $Z\backslash Z_{\neq 0}$, $G$ acts freely on $\mu^{-1}(0)\cap N$, hence the restriction of $d\mu$ to the tangent space $T_mN$ is surjective for any $m\in N$; this is due to the relation $\langle d\mu,X\rangle=\iota_{X_M}\omega$ and nondegeneracy of $\omega$. Hence $\mu^{-1}(0)$ is transverse to every stratum $N$ of $Z\backslash Z_{\neq0}$. 

This shows that for any component $Z_i$ of $Z\backslash Z_{\neq0}$, $Z_i^{\mu}:=Z_i\cap \mu^{-1}(0)$ is a hypersurface in $\mu^{-1}(0)$. Since $ x^{\mu}_i:=x_i|_{\mu^{-1}(0)}$ is still a defining function for $Z_i^{\mu}$ in $\mu^{-1}(0)$. Since all the intersections are given by equations $x_{i_1}=\cdots =x_{i_r}=0$, the branches cross normally. 

Due to $G$-equivariance of $x_i^{\mu}$, they descends to a defining function for $\tilde{Z}_i:=Z_i^{\mu}/G$ in $M_0$ and by the same argument the hypersurfaces  all cross normally.  

Denote $\A:=\bk T_ZM$, define the subbundle $\A^{\mu}\to \mu^{-1}(0)$ as $\ker(d_{\A}\mu|_{\mu^{-1}(0)})$. This bundle has the rank $\dim(M)-\dim(G)$. The vector fields $X_M$, for $X\in\g$, gives subspace $\g_m\subset\A_m$. Since 
\[\omega_m(X_{M},Y_{M})=\pair{\d\mu_m(Y_M)}{X}=\pair{\mu(m)}{[Y,X]}=0
    \qquad(m\in P),\]
    hence $\g_m$ is an isotropic subspace of $\A_m$, and by definition $\A^{\mu}_m=\g_m^{\perp \omega}$. Hence, we obtain a symplectic vector bundle 
    $$\A_0:=\A^{\mu}\slash \g,$$ 
    by linear symplectic reduction. The symplectic form $\omega_0$ on $\A_0$ identifies by the property $p^*\omega_0=\iota^*\omega$. We still need to identify $\A_0$ with $T_{Z_0}M_0$.

    We have $\ker(\rho_m)\subset \A^{\mu}_m$ and $\ker(\rho_m)\cap\g_m=\{0\}$; so $\rho_m$ is injective on $\g_m$. Since $\d\mu_m(\rho(v))=\d_{\A}\mu|_m(v)=0$, we have $\rho(\A^{\mu}_m)\subset T_m\mu^{-1}(0)$. From these we conclude that $\rho$ descends to a well-defined map
    \[\rho_0:\A_0\to TM_0\]
    for which we have canonical isomorphism $\ker\rho_0\big|_{[m]}=\big(\ker\rho_m\oplus\g_m\big)/\g_m\simeq\ker\rho_m$.
    Similar to $\bk T_ZM$, the anchor $\rho_0$ is an isomorphism over $M_0\backslash Z_0$, and over a $p$-branch intersection has kernel of dimension $p$.

    Let $u\in \Gamma(\A^{\mu})$  be a $G$-invariant section (identified with a section of $\A_0$). For every $i$, 
    \[\d\tilde x_i\big(\rho_0(u)\big)\circ p=\d x_i\big(\rho(u)\big)\big|_P.\]
    Since $dx_i(\rho(u))$ vanishes to order $k$, we conclude $d\tilde{x}_i(\rho_0(u))$ vanishes to order $k$ and hence $\rho_0(\Gamma(\A_0))\subset\Gamma(T_{Z_0}M_0)$. The sheaf $\rho_0(\A_0)$ is locally free sheaf of the same rank as the sheaf $\Gamma(\bk T_{Z_0}M_0)$; since $ker(\rho_0)\simeq \ker(\rho_m)$, both sheaf have the same local generators near $Z_0$. Thus, we have the equality $\Gamma(\A_0)=\Gamma(\bk T_{Z_0}M_0)$; i.e. $\A_0\simeq \bk T_{Z_0}M_0$.
    Closedness of $\omega_0$ now follows from the equality $p^*\omega_0=\iota^*\omega$ and the fact that $p^*$ commutes from $d_{\A_0}$.

    We have the isomorphism $p^*:H^*(M_0)\simeq H^*_G(\mu^{-1}(0)) $ under which we have identifications
    \[c_1^G(\L)|_{\mu^{-1}(0)}=(\sm([\omega])-[\bar\mu])|_{\mu^{-1}(0)}=\sm([\omega])|_{\mu^{-1}(0)}\buildrel{(p^*)^{-1}}\over\longmapsto  \sm([\omega_0]).\]
    In the second equality we used the fact that near $\mu^{-1}(0)$ $\mu=\bar\mu$, and in the last identification we used the equality $p^*\omega_0=\iota^*\omega$. This shows that $\L_0$ is a prequantum line bundle for $(\bk T_{Z_0}M_0,\omega_0)$.
\end{proof}

\medskip
\begin{theorem}\label{thm:[Q,R]=0forbk-manifolds}
    Let $(M,Z,x_i,\omega,\mu)$ be a compact prequantizable $G$-Hamiltonian $b^k$-symplectic manifold, such that
    \begin{enumerate}
        \item  Assume  $0$ is regular value for $\mu$, and $G$ acts freely on $\mu^{-1}(0)$;

         \item For every stratum $N=W_1\cap\cdots\cap W_p$ where $W_1,\cdots, W_p$ are any distinct components of $Z_{\neq0}$, the non-negative linear combination of $\{c_{W_1},\cdots, c_{W_p}\}$ is strongly convex;

        \item $(M,Z,x_i,\omega,\mu)$ has odd leading type.
    \end{enumerate}

    If  $(M_0,Z_0,\omega_0)$ is the reduced data then the $b^k$-symplectic manifold $(T_{Z_0}M_0,\omega_0)$ is prequantizable and 
    \[Q(\bk T_{Z_0}M_0,\omega_0)=\big(Q^G(\bk T_ZM,\omega,\mu)\big)^G\]
    where $\big(Q^G(\bk T_ZM,\omega,\mu)\big)^G=\int_GQ^G(\bk T_ZM,\omega,\mu)(g)\ \d g$ is the ``multiplicity of the trivial representation.''
\end{theorem}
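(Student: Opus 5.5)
The plan is to reduce the statement to the known $[Q,R]=0$ theorem for $\spinc$-Dirac operators on compact $G$-manifolds, as proved by Paradan–Vergne or by Tian–Zhang-type Witten deformation. The route goes through the identification, established in Subsection~\ref{ss:quantizationFredhol}, of $Q^G(\bk T_ZM,\omega,\mu)$ with $\textup{Ind}_G(\slashed{D}^{\L})$, the $\spinc$-Dirac operator of the canonical $\spinc$-structure on $M$ coming from Lemma~\ref{lem:stablyisomorphic}, twisted by $\L$.

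First, prequantizability of the reduced space and the right-hand side are settled by Lemma~\ref{l:bk-symplecticreduction}, which produces $\L_0$. Applying Subsection~\ref{ss:quantizationFredhol} to $(M_0,Z_0,\omega_0)$ gives $Q(\bk T_{Z_0}M_0,\omega_0)=\textup{Ind}(\slashed{D}^{\L_0}_{M_0})$. I would also check that the $\spinc$-structure on $M_0$ induced from $TM_0\oplus\bC\simeq \A_0\oplus\bC$ agrees, up to homotopy, with the $\spinc$-reduction of the one on $M$ along $\mu^{-1}(0)$. This holds because $\A_0=\A^\mu/\g$, so the $\spinc$-reduction is the same linear-algebra construction carried out on $\A$ and on $TM$, compatible with the stable isomorphism.

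The heart of the argument is to produce a smooth equivariant moment map $\mu_\epsilon:M\to\g^*$ for $\L$ such that the Kirwan vector field $\kappa_\epsilon=(\mu_\epsilon)_M$ vanishes only on $\mu_\epsilon^{-1}(0)=\mu^{-1}(0)$ and on critical components that contribute zero to the $G$-invariant part. Here $\mu_\epsilon$ must be in the same equivariant class as $\bar\mu$, so that $c_1^G(\L)=\sm[\omega]-\mu_\epsilon$. The construction is as follows:
\begin{enumerate}
\item On $M_\epsilon$ take $\mu_\epsilon=\mu$. In a collar $|x_i|<\epsilon$ around a component $W$ of $Z_{\neq0}$, interpolate between $\mu$ and $\bar\mu$ by $\mu_\epsilon=\bar\mu-\chi(x_i)\,(\bar\mu-\mu)$, with $\chi$ a cutoff. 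The difference $\bar\mu-\mu$ is the explicit singular sum in \eqref{eq:mumubar}, which is $G$-invariant and depends only on $x_i$, so $\mu_\epsilon$ differs from $\bar\mu$ by an exact equivariant term and the equivariant class is unchanged.
\item Near $W$ the dominant term is $c_W/((n_W-1)x_i^{n_W-1})$. Odd leading type makes $n_W-1$ even, so this term has the same sign on both sides of $W$ (as $-c_1\ln|x_i|$ does when $k=1$). Hence in the collar $\langle\mu_\epsilon,c_W\rangle$ is large and of a fixed sign, and $\mu_\epsilon$ stays in a region where it cannot vanish.
\item At strata $N=W_1\cap\cdots\cap W_p$, condition (2) (strong convexity of the cone spanned by the $c_{W_j}$) gives a vector $\xi$ with $\langle c_{W_j},\xi\rangle>0$ for all $j$. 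Pairing with $\xi$ then shows that $\mu_\epsilon$, and with it $\kappa_\epsilon$ after the standard argument that $\kappa_\epsilon(m)=0$ forces $\mu_\epsilon(m)\in\g_m^*$-fixed directions, has no zeros in the collars near $N$.
\end{enumerate}

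With $\mu_\epsilon$ in hand I would apply Paradan–Vergne's $\spinc$ $[Q,R]=0$ (or the Ma–Zhang/Tian–Zhang localization). Its hypotheses are that $0$ is a regular value and that $G$ acts freely on $\mu_\epsilon^{-1}(0)=\mu^{-1}(0)$, which is condition (1). The conclusion is that $\textup{Ind}_G(\slashed{D}^{\L})^G$ equals the index of the reduced $\spinc$-Dirac operator on $M_0$ twisted by $\L_0$, provided the critical components of $\|\mu_\epsilon\|^2$ away from zero contribute nothing to the invariant part. For critical components inside $M_\epsilon$, I would use the usual shifting/positivity argument from the symplectic case, since $\mu_\epsilon=\mu$ is a genuine symplectic moment map there; Step~3 is arranged so that no critical components occur in the collars.

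The main obstacle is exactly Steps~2--3: ensuring that the interpolation creates no new zeros of $\kappa_\epsilon$, and that the contributions of the remaining critical sets in $M_\epsilon$ vanish even though $\mu$ is unbounded. Example~\ref{ex:QRfails} shows this genuinely fails for even degree, where $\mu$ has opposite signs on the two sides of $W$ and any interpolation through the bounded $\bar\mu$ must cross a nonzero critical value carrying invariant multiplicity. My first attempt would be a sign-by-sign analysis in a one-dimensional normal coordinate $x_i$, reducing to the sphere computation of Example~\ref{ex:spherehamiltonian}, combined with the convexity vector $\xi$ at crossings.
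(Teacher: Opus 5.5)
Your outline matches the paper's proof. You identify $Q^G$ with $\textup{index}_G(\slashed D^{\L})$, replace $\mu$ by a smooth moment map for an invariant connection on $\L$, and use (2) and (3) to show the Kirwan field has no zeros near $Z_{\neq0}$. You then localize, kill the $\beta\neq0$ terms by the positivity argument outside the collar, and identify the $\beta=0$ term with the reduced quantization; your $\spinc$-compatibility check on $M_0$ is the paper's Step 5. The problem is the regularization you actually write down. It breaks the only step where (2) and (3) enter.

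\textbf{The gap.} Since $\bar\mu-\mu$ is singular along $W$, your cutoff $\chi$ must vanish near $x_i=0$. So $\mu_\epsilon=\bar\mu-\chi(x_i)(\bar\mu-\mu)$ \emph{coincides with the bounded function $\bar\mu$ on a neighbourhood of $W$}. Your item 2, that $\langle\mu_\epsilon,c_W\rangle$ is large of fixed sign in the collar, is therefore false for this $\mu_\epsilon$, and the interpolation from the large values of $\mu$ down to $\bar\mu$ can pass through zero.

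This already happens in the simplest example. For the $k=1$ sphere of Figure~\ref{fig:parity} (left), $\bar\mu\equiv b=-1+\ln(1-a)<0$, while $\mu=b-\ln|z-a|\to+\infty$ at $Z_a$. Hence $\mu_\epsilon=b-\chi\ln|z-a|$ vanishes on at least one extra free orbit on each side of $Z_a$ inside the collar, and $\mu_\epsilon^{-1}(0)\neq\mu^{-1}(0)$. Here the spurious contributions happen to cancel by orientation, but your argument does not show that.

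In general the collar will contain spurious level-$0$ components and pieces of $G\cdot(M^\beta\cap\mu_\epsilon^{-1}(\beta))$ with $\beta\neq0$. At those points $\mu_\epsilon$ is no longer the moment map of the genuine $b^k$-symplectic form with which $J$ is compatible. So the positivity argument for $\beta\neq0$, and the regularity and freeness needed at level $0$, are unavailable there. You call this ``the main obstacle'' and leave it open, but it is exactly the new content of the theorem.

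\textbf{The repair.} As in Step 1 of the paper, truncate the singular profiles at a large constant instead of cutting them off. Replace $x_i^{-(j-1)}$ by a smooth $p_{j,\epsilon}(x_i)$ with these properties:
\begin{itemize}
\item it equals $x_i^{-(j-1)}$ for $|x_i|\ge2\epsilon$ and is constant for $|x_i|\le\epsilon$;
\item $|p_{j,\epsilon}|\lesssim\epsilon^{-(j-1)}$;
\item the leading one is bounded below by a positive multiple of $(2\epsilon)^{-(n_W-1)}$.
\end{itemize}
Treat $\ln|x_i|$ the same way. The difference from $\bar\mu$ is still $\iota_{X_M}$ of a smooth invariant $1$-form built from the extended residue forms, so your equivariant-class remark survives. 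Near a stratum $N=W_1\cap\cdots\cap W_p$ one now has $\widetilde\mu_\epsilon=-\sum_lt_lc_{W_l}+o\big(\sum_lt_l\big)$. Here $t_l>0$ by (3), and the $t_l$ are uniformly large near $N$, independently of small $\epsilon$.

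\textbf{A second point.} Pairing with $\xi$ only gives $\widetilde\mu_\epsilon\neq0$, not $\widetilde\kappa_\epsilon\neq0$. You need the directional argument of \cite[Lemma 4.9]{lin2022log}:
\begin{itemize}
\item For $0\neq X=\sum_ls_lc_{W_l}$, one has $\sum_ls_l\,\iota_{X_Z}\res^1_{l,n_l}(\omega)=\|X\|^2>0$, so $X_M\neq0$ on $N$.
\item By openness and scale invariance, $X_M\neq0$ near $N$ for all $X$ in a cone $\U_N$ around the span of the leading weights.
\item Strong convexity gives $\|\sum_lt_lc_{W_l}\|\ge\delta\sum_lt_l$, which forces $\widetilde\mu_\epsilon$ into $\U_N$ near $N$.
\end{itemize}
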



\begin{figure}[ht]
\centering
\begin{tikzpicture}[
    x=1cm, y=1cm, font=\footnotesize,
    sphere/.style   = {draw=black!70, line width=0.5pt},
    divisor/.style  = {draw=orange!85!red, line width=1.1pt},
    divisorb/.style = {draw=orange!85!red, line width=0.5pt, dashed},
    axisline/.style = {draw=black!75, line width=0.5pt, -{Stealth[length=4pt]}},
    upbar/.style    = {draw=blue!55!black, line width=2pt},
    lowbar/.style   = {draw=teal!70!black, line width=2pt},
    upar/.style     = {-{Stealth[length=3.5pt]}, blue!55!black, line width=0.7pt},
    lowar/.style    = {-{Stealth[length=3.5pt]}, teal!70!black, line width=0.7pt},
    wt/.style       = {circle, fill=orange!85!red, inner sep=1.25pt},
    onecov/.style   = {fill=blue!12},
    gapfill/.style  = {pattern=north east lines, pattern color=black!35},
    lbl/.style      = {inner sep=1pt}
  ]
 
 
\begin{scope}
  \draw[sphere] (0,0) circle (1.4);
  \draw[divisorb] (0,-1.066) ellipse (0.907 and 0.25);
  \draw[divisor]  (-0.907,-1.066)
        arc[start angle=180, end angle=360, x radius=0.907, y radius=0.25];
  \node[lbl,below left] at (-0.88,-1.20) {\color{orange!85!red}$Z_a$};
 
  \fill (0,1.4) circle (1.3pt);  \node[lbl,above] at (0,1.4) {$z=1$};
  \fill (0,-1.4) circle (1.3pt); \node[lbl,below] at (0,-1.4) {$z=-1$};
 
  \draw[upar]  (0.50,1.05) .. controls (0.95,0.35) and (0.92,-0.40) .. (0.72,-0.85);
  \draw[lowar] (-0.16,-1.39) .. controls (-0.55,-1.36) and (-0.80,-1.28) .. (-0.86,-1.18);
 
  \begin{scope}[shift={(2.9,0)}]
    \fill[onecov] (-0.20,-0.42) rectangle (0.20,0.42);
    \draw[axisline] (0,-1.80) -- (0,1.95);
    \node[lbl,above] at (0,1.95) {$\mathfrak{g}^*$};
    \foreach \l/\t in {-1/{n_1=-1}, 0/{0}, 1/{n_2=1}}
      { \draw (-0.07,0.42*\l) -- (0.07,0.42*\l);
        \node[lbl,left] at (-0.14,0.42*\l) {$\t$}; }
    \draw[upbar]  (0.34,-0.42) -- (0.34,1.72);
    \draw[upar]   (0.34,1.72) -- (0.34,1.95);
    \draw[lowbar] (0.70,0.42) -- (0.70,1.72);
    \draw[lowar]  (0.70,1.72) -- (0.70,1.95);
    \node[wt] at (0,-0.42) {}; \node[wt] at (0,0) {};
    \node[lbl,right,align=left] at (0.86,1.15) {};
    \node[lbl,right,align=left] at (0.86,-0.10) {};
  \end{scope}
 
  \node[lbl,align=center] at (1.55,-2.15)
      {$k=1$: both branches $\to+\infty$,\\ so the two images {overlap}};
  \node[lbl,align=center] at (1.55,-2.95)
      {$Q^{S^1}=t^{-1}+1$;\ at $\lambda=0$,\\ mult.\ $=+1=Q(\mathrm{pt})$};
\end{scope}
 
\usetikzlibrary{patterns}
\begin{scope}[shift={(7.5,0)}]
  \draw[sphere] (0,0) circle (1.4);
  \draw[divisorb] (0,0) ellipse (1.4 and 0.39);
  \draw[divisor]  (-1.4,0)
        arc[start angle=180, end angle=360, x radius=1.4, y radius=0.39];
  \node[lbl,left] at (-1.52,0.34) {\color{orange!85!red}$Z$};
 
  \fill (0,1.4) circle (1.3pt);  \node[lbl,above] at (0,1.4) {$z=1$};
  \fill (0,-1.4) circle (1.3pt); \node[lbl,below] at (0,-1.4) {$z=-1$};
 
  \draw[upar]  (0.45,1.12) .. controls (0.88,0.70) and (0.94,0.35) .. (0.90,0.14);
  \draw[lowar] (-0.45,-1.12) .. controls (-0.88,-0.70) and (-0.94,-0.35) .. (-0.90,-0.14);
 
  \begin{scope}[shift={(2.9,0)}]
    \fill[gapfill] (-0.20,-0.42) rectangle (0.20,0.42);
    \draw[black!45,line width=0.35pt] (-0.20,-0.42) rectangle (0.20,0.42);
    \draw[axisline] (0,-1.95) -- (0,1.95);
    \node[lbl,above] at (0,1.95) {$\mathfrak{g}^*$};
    \foreach \l/\t in {-1/{n_2=-1}, 0/{0}, 1/{n_1=1}}
      { \draw (-0.07,0.42*\l) -- (0.07,0.42*\l);
        \node[lbl,left] at (-0.14,0.42*\l) {$\t$}; }
    \draw[upbar]  (0.34,0.42) -- (0.34,1.72);
    \draw[upar]   (0.34,1.72) -- (0.34,1.95);
    \draw[lowbar] (0.34,-0.42) -- (0.34,-1.72);
    \draw[lowar]  (0.34,-1.72) -- (0.34,-1.95);
    \node[wt] at (0,-0.42) {}; \node[wt] at (0,0) {};
    \node[lbl,right,align=left] at (0.52,1.15) {};
    \node[lbl,right,align=left] at (0.52,-1.15) {};
    \node[lbl,right,align=left] at (0.52,0.02) { no level set};
  \end{scope}
 
  \node[lbl,align=center] at (1.55,-2.15)
      {$k=2$: branches $\to\pm\infty$,\\ so the two images are {disjoint}};
  \node[lbl,align=center] at (1.55,-2.95)
      {$Q^{S^1}=-1-t^{-1}$;\ at $\lambda=0$,\\ mult.\ $=-1\neq0=Q(\emptyset)$};
\end{scope}
 
\node[lbl,align=center,font=\scriptsize] at (4.5,-3.75)
  {\textcolor{blue!55!black}{\rule[0.4ex]{7pt}{1.6pt}}~image of the branch $z>a$
   \qquad
   \textcolor{teal!70!black}{\rule[0.4ex]{7pt}{1.6pt}}~image of the branch $z<a$
   \qquad
   \textcolor{orange!85!red}{$\bullet$}~weights of $Q^{S^1}$};
 
\end{tikzpicture}
 
\caption{The image of the moment map on $(S^2,Z_a)$ for
$\omega_{a,k}=\frac{\d z\wedge\d\theta}{2\pi(z-a)^k}$ with
$X_M=-2\pi\partial_\theta$, in the two parities. In both panels the weights
occurring in $Q^{S^1}$ are $\{-1,0\}$, and the shaded band on the
$\mathfrak{g}^*$-axis is the interval between $n_1$ and $n_2$ that carries
them. \emph{Left ($k=1$, odd leading type, $a=\frac{1-e^{2}}{1+e^{2}}$, $b=-1+\ln(1-a)$, $\mu=-\ln|z-a|+b$,
$n_1=-1$, $n_2=1$).} On the both sides of $Z_a$, the moment
map tends to $+\infty$ from either side and the branch images
$[n_1,\infty)$, $[n_2,\infty)$ overlap. On $[n_1,n_2)$ exactly one level set
lies over each value, giving multiplicity $+1$; above $n_2$ the two level sets
carry opposite orientations and cancel. \emph{Right ($k=2$, even leading type,
$a=b=0$, $\mu=1/z$, $n_1=1$, $n_2=-1$).} Now $(z-a)^{k-1}$ changes sign across
$Z$, the branches diverge to $+\infty$ and $-\infty$, and the images
$[n_1,\infty)$ and $(-\infty,n_2]$ are disjoint. The weights of $Q^{S^1}$ lie
in the hatched gap, over which there is no level set at all. At $\lambda=0$ the
multiplicity of the trivial representation is $-1$ whereas
$\mu^{-1}(0)=\emptyset$. This is Example~\ref{ex:QRfails}.}
\label{fig:parity}
\end{figure}


\begin{remark}

    If $Z$ has no crossings, hypothesis (2) is automatic. Indeed every stratum
  is then a single component $W\subset Z_{\neq0}$, and the cone
  $\{a\,c_W:a\ge0\}$ generated by one non-zero vector is
  strongly convex; equivalently $\mu$ is proper, as in
  \cite[Remark 4.2]{lin2022log}. In particular
  Theorem~\ref{thm:[Q,R]=0forbk-manifolds} applies with hypotheses (2), (3) alone to
  $b^k$-symplectic manifolds whose singular locus is a disjoint union of
  hypersurfaces, which is the setting of  \cite{guillemin2021geometric,guillemin2018convexity}.

   With crossings, hypothesis (2) cannot simply be deleted, because it is the
  right-hand side of Theorem~\ref{thm:[Q,R]=0forbk-manifolds} that fails to exist rather than the
  proof that fails. In \cite[Example 4.1]{lin2022log} one takes
  $M=S^2\times S^2$ with
  $\omega=\frac{\d z_1}{2\pi z_1}\wedge\d\theta_1
   -\frac{\d z_2}{2\pi z_2}\wedge\d\theta_2$
  and $Z_1=\{z_1=0\}$, $Z_2=\{z_2=0\}$ meeting along $S^1\times S^1$; the
  modular weights are $+1$ and $-1$, so the cone at that stratum is a line and
  $\mu^{-1}(0)$ is non-compact.  When $k=1$, Condition (2) is equivalent to $\mu$ being a proper map (see \cite[Lemma 4.8]{lin2022log}).
\end{remark}

\begin{remark}\label{rem:GMWcomparison}
  The restriction to odd leading type matches the assumption for
  formal quantization. In \cite{guillemin2021geometric} the formal
  quantization of a $b^m$-symplectic manifold with a Hamiltonian torus action
  of non-zero leading modular weight is shown to be a finite dimensional
  virtual $T$-module when $m$ is odd, whereas for $m$ even it is \emph{not}
  finite dimensional. Example~\ref{ex:QRfails} is an instance: the reduced
  spaces $\mu^{-1}(\lambda)/S^1$ are single points for every $|\lambda|\ge1$,
  so the formal quantization $\sum_\lambda\pm Q(\mu^{-1}(\lambda)/S^1)$ diverges. By contrast the
  quantization $Q^{S^1}(\bk T_{Z}S^2,\omega,\mu)=-1-t^{-1}$ of \eqref{eq:QisIndex} is a
  finite dimensional virtual representation. Thus for even leading type the
  index-theoretic quantization studied here is finite where the formal one is
  not, and the two necessarily disagree; for odd leading type
  Theorem~\ref{thm:[Q,R]=0forbk-manifolds} is what identifies them. (c.f. \cite{guillemin2019desingularizing})
\end{remark}

\subsection{A counterexample without odd leading type}\label{ss:counterexample}
 
The $[Q,R]=0$ statement genuinely fails without
Condition (3) of Theorem \ref{thm:[Q,R]=0forbk-manifolds}, so we record the smallest example.
 
\begin{example}\label{ex:QRfails}
  Following Examples \ref{ex:sphereliouvillevolume},\ref{ex:prequantizedsphere},and \ref{ex:spherehamiltonian}, take $k=2$, $M=S^2$, and $a=0$; $Z=\{z=0\}$, and
  \[\omega_{2,0}=\frac{\d z\wedge \d\theta}{2\pi z^{2}},\qquad
    X_M=-2\pi\frac{\partial}{\partial\theta},\qquad \mu_{2,0}=\frac1z .\]
  Here $n_{Z}=2$, $\langle c_{Z},X\rangle=-1$, and
  $q=-\tfrac{1}{1-a}-\tfrac{1}{1+a}\big|_{a=0}=-2\in\bZ$, so the data is
  prequantizable. The fixed point values are $\mu_{2,0}|_{z=1}=1$ and
  $\mu_{2,0}|_{z=-1}=-1$, so by the fixed point formula
  \[Q^{S^1}(\bk T_{Z}S^2,\omega_{2,0},\mu_{2,0})(t)=\frac{t^{1}-t^{-1}}{1-t}=-\,1-t^{-1},\]
  and the multiplicity of the trivial representation is $-1$. On the other
  hand $\mu_{2,0}=1/z$ omits the value $0$, so $\mu_{2,0}^{-1}(0)=\emptyset$, the
  reduced space is empty and its quantization is $0$. Thus
  \[\big[Q^{S^1}(\bk T_{Z}S^2,\omega_{2,0},\mu_{2,0})\big]^{S^1}=-1\neq 0=Q(\text{reduced space}).\]
  Note that $G$ acts freely on $\mu_{2,0}^{-1}(0)=\emptyset$, so no regularity
  hypothesis is being violated: it is the parity of $n_{Z}$ that fails.
\end{example}

\section{Proof of Theorem \ref{thm:[Q,R]=0forbk-manifolds}}\label{sec:[Q,R]proof}
The proof follows closely the proof of $[Q,R]=0$ in \cite{lin2022log}, with some adjustments.
\begin{proof}[\textbf{Proof of Theorem \ref{thm:[Q,R]=0forbk-manifolds}}]
    By Subsection~\ref{ss:geomtricquantizationb^ksymplectic} and the
$\spinc$ description of the quantization,
\begin{equation}\label{eq:QisIndex}
  Q^G(\bk T_ZM,\omega,\mu)=\textup{index}_G(\slashed D^{L}), 
\end{equation}
an honest equivariant index of an elliptic operator on the compact manifold
$M$. It therefore suffices to prove the corresponding statement for
$\slashed D^L$, and we may run the deformation argument of Paradan
\cite{paradan2001localization} and Paradan--Vergne
\cite{paradan2019witten} exactly as in \cite[Section 4]{lin2022log}.
We indicate only the points at which the $b^k$ setting differs.

\medskip
\noindent\emph{Step 1: a smooth regularization of $\mu$.}
Fix $\epsilon>0$. For $j\ge2$ let $p_{j,\epsilon}\in C^\infty(\bR)$ satisfy
$p_{j,\epsilon}(x)=x^{-(j-1)}$ for $|x|\ge2\epsilon$, let $p_{j,\epsilon}$
be constant on $|x|\le\epsilon$, and let
$|p_{j,\epsilon}|\le C_j\epsilon^{-(j-1)}$; let $\ln_\epsilon$ be as in
\cite[Definition 4.6]{lin2022log}. Replacing $x_i^{-(j-1)}$ by
$p_{j,\epsilon}(x_i)$ and $\ln|x_i|$ by $\ln_\epsilon|x_i|$ in
\eqref{eq:mumubar} defines $\widetilde\mu_\epsilon\in
C^\infty(M,\g^*)$, and the same substitution in the expression for the
smooth part $\sm(\omega)$ defines a smooth degenerate $2$-form
$\widetilde\omega_\epsilon$ with
$\iota_{X_M}\widetilde\omega_\epsilon=-\d\langle\widetilde\mu_\epsilon,X\rangle$.
Both agree with $\mu$, $\omega$ outside the $2\epsilon$-collar of
$Z_{\neq0}$.
 
Identify $\g\simeq\g^*$ by an invariant inner product and let
$\widetilde\kappa_\epsilon(m)=\big(\widetilde\mu_\epsilon(m)\big)_M(m)$ be
the Kirwan vector field, with vanishing locus
\[\widetilde\C=G\cdot\bigcup_{\beta\in\t^*_+}
   M^{\beta}\cap\widetilde\mu_\epsilon^{-1}(\beta).\]
 
\medskip
\noindent\emph{Step 2: the collar carries no zeros.}
This is the only step where hypotheses (2) and (3) are used, and the only
place where the $b^k$ estimates differ from the log case.

\begin{lemma}\label{lem:collarnozeros}
  Assume Conditions (2) and (3). Then $\mu$ is proper, and there are $\epsilon_0>0$ and
  a neighbourhood $U$ of $Z_{\neq0}$ such that for all
  $0<\epsilon<\epsilon_0$ the field $\widetilde\kappa_\epsilon$ does not
  vanish on $U$; consequently
  $\widetilde\C=\C:=G\cdot\bigcup_\beta M^\beta\cap\mu^{-1}(\beta)$ and
  $\mu=\widetilde\mu_\epsilon$ on $\C$.
  
\end{lemma}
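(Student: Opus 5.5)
The plan is to treat properness and the absence of zeros of the Kirwan field in the same way. Both come from one asymptotic description of $\mu$ near a stratum $N=W_1\cap\cdots\cap W_p$ of $Z_{\neq0}$. Write $n_i:=n_{W_i}$ and $c_i:=c_{W_i}$. By \eqref{eq:mumubar}, and because $\bar\mu$ is smooth, we have near $N$
\[\mu=\bar\mu-\sum_{i}\Big(\sum_{j=2}^{n_i}\frac{c_j}{(j-1)x_i^{j-1}}-c_1\ln|x_i|\Big).\]
The leading term in $x_i$ is $-a_i c_i$, where $a_i=\frac{1}{(n_i-1)x_i^{n_i-1}}$ if $n_i\ge2$ and $a_i=-\ln|x_i|$ if $n_i=1$. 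Condition (3) says every $n_i$ is odd, so $n_i-1$ is even. Hence $a_i\to+\infty$ from \emph{both} sides of $W_i$; this is exactly what fails in Example~\ref{ex:QRfails}. The lower-order modular terms are $O(a_i^{(n_i-2)/(n_i-1)})$, or $O(\ln a_i)$ when $n_i=2$, and so they are $o(a_i)$. Therefore
\[\mu=-\sum_i a_i c_i+R,\qquad |R|=o\Big(\sum_i a_i\Big)+O(1).\]
Condition (2) gives a $\delta>0$ and a $\xi\in\g$ with $\langle c_i,\xi\rangle\ge\delta|c_i|>0$, and therefore $\big|\sum_i a_ic_i\big|\ge\delta'\sum_i a_i$ for $a_i\ge0$. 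This gives $\langle\mu,\xi\rangle\to-\infty$ uniformly as we approach $Z_{\neq0}$, which proves properness of $\mu$. Compactness of $M$ then lets us cover $Z_{\neq0}$ by finitely many such stratum charts.

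For the zeros, note first that $\widetilde\kappa_\epsilon(m)=0$ means $v:=\widetilde\mu_\epsilon(m)$ satisfies $v_M(m)=0$. Then $\iota_{v_M}\widetilde\omega_\epsilon=-\d\langle\widetilde\mu_\epsilon,v\rangle$ vanishes at $m$. Since $v_M$ is a section of $\bk T_ZM$ (Remark~\ref{rem:infinitesimalactionbk}), I will work at the algebroid level. Pair this $b^k$-form with the local sections $x_i^{n_i}\partial_{x_i}$, which are allowed since $n_i\le k$. Outside the $2\epsilon$-collar we have $\widetilde\mu_\epsilon=\mu$, and a direct computation gives
\[\d_{\A}\mu(x_i^{n_i}\partial_{x_i})=c_i+O(x_i).\]
Hence $\langle c_i+O(x_i),v\rangle=0$ for each $i$. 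Summing with weights $a_i$ gives
\[0=\sum_i a_i\langle c_i,v\rangle=-\Big|\sum_i a_ic_i\Big|^2+o\Big(\big(\textstyle\sum_i a_i\big)^2\Big)+O\Big(\textstyle\sum_i a_i\Big).\]
By the convexity bound, the first term dominates once all the $a_i$ are large. So there is a neighbourhood $U$ of $Z_{\neq0}$ on which no zero can occur. Points of $U$ lying near only some of the branches are handled the same way, since the remaining $a_i$ stay bounded and are absorbed into $R$. This part is uniform in $\epsilon$, because there $\widetilde\mu_\epsilon=\mu$.

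The main obstacle is the collar $|x_i|<2\epsilon$, where $\widetilde\mu_\epsilon\neq\mu$. On $|x_i|\le\epsilon$ the function $p_{j,\epsilon}$ is constant, so the normal derivative of $\widetilde\mu_\epsilon$ vanishes and the pairing identity above degenerates. I will first try to argue with $\xi$ directly rather than with the normal derivative. I would choose $p_{j,\epsilon}$ even and monotone in $|x|$, with values $\ge c\,\epsilon^{-(j-1)}$ on the whole collar, and choose $\ln_\epsilon$ similarly as in \cite[Definition 4.6]{lin2022log}. Then inside the collar $\widetilde\mu_\epsilon=-\sum_i\tilde a_ic_i+R$ with every $\tilde a_i\gtrsim\epsilon^{-(n_i-1)}$, or $\gtrsim|\ln\epsilon|$ when $n_i=1$, and so $|\widetilde\mu_\epsilon|\to\infty$ uniformly in the collar as $\epsilon\to0$. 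A zero $m$ would put $\widetilde\mu_\epsilon(m)=\beta$ with $\beta_M(m)=0$, that is $m\in M^\beta\cap\widetilde\mu_\epsilon^{-1}(\beta)$. To rule this out I would use the fact that $\langle\widetilde\mu_\epsilon,\beta\rangle$ is locally constant on the connected components of $M^\beta$, since $\iota_{\beta_M}\widetilde\omega_\epsilon=0$ there. Each component of $M^\beta$ meeting the collar also meets the region $|x_i|\ge2\epsilon$, or else lies in the stratum $N$, where the same monotonicity controls it. Comparing values there should bound $|\beta|^2$ by $\max_{M\setminus U}|\mu|$ plus lower-order terms, which contradicts $|\beta|\to\infty$ for small $\epsilon$. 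The delicate point is the uniformity of the constants in $\epsilon$, and this is the step where I expect to need the most care.

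Once the collar is settled, we get $\widetilde\C\cap U=\emptyset$. Outside the collar $\widetilde\mu_\epsilon=\mu$, so $\widetilde\C=\C$ and $\mu=\widetilde\mu_\epsilon$ on $\C$, as claimed.
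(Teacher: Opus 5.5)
Your properness argument and your treatment of the part of $U$ outside the collar are sound. The weighted sum $\sum_i a_i\langle c_i+O(x_i),v\rangle$ is a quantitative version of the residue argument (Lemma 4.9 of \cite{lin2022log}) that the paper invokes. The gap is in the collar $\{|x_i|<2\epsilon\}$, which is exactly where the $\epsilon$-uniformity claimed in the lemma is at stake, and the argument you sketch there does not close.

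Nothing you have said forces a component of $M^\beta$ that meets the collar either to leave it or to lie in $N$. For example, an isolated fixed point of $\beta$ inside the collar, off $Z$, is not excluded. Even when a component does leave the collar, the exit point may still lie in $U$, where $|\mu|$ is unbounded. So the constancy of $\langle\widetilde\mu_\epsilon,\beta\rangle$ along that component gives no bound on $|\beta|^2$ in terms of $\max_{M\setminus U}|\mu|$. More fundamentally, the size of $\beta$ is the wrong thing to exploit, since large elements of $\g$ can have fixed points. What rules out zeros of $\widetilde\kappa_\epsilon$ near $Z_{\neq0}$ is the \emph{direction} of $\beta=\widetilde\mu_\epsilon(m)$, which is close to the cone spanned by the $-c_i$.

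The fix is already in your hands, because the ``degeneration'' you worry about does not occur: the pairing identity never needed the truncated data. Whether $v_M(m)=0$ depends only on the action. For every fixed $v\in\g$,
\[\omega\big(v_M,x_i^{n_i}\partial_{x_i}\big)=-\big\langle\d_{\A}\mu\,(x_i^{n_i}\partial_{x_i}),v\big\rangle=-\langle c_i+e_i,v\rangle,\qquad e_i=O(|x_i|),\]
and this holds with the \emph{original} $\omega$ and $\mu$ throughout the chart, including the collar. Both sides are smooth, so by continuity it also holds on $Z$ itself. There, $v_M(m)=0$ in $T_mM$ still forces $v_M(m)=0$ in $\A_m$, because $v_Mx_i=0$.

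Apply the identity with $v=\widetilde\mu_\epsilon(m)=-\sum_i\tilde a_ic_i+\tilde R$, where the $\tilde a_i\ge0$ are the truncated leading coefficients. Your lower bound, together with $|p_{j,\epsilon}|\le C_j\epsilon^{-(j-1)}$, gives $\tilde a_i\asymp\epsilon^{-(n_i-1)}$ in the collar (or $\asymp|\ln\epsilon|$ when $n_i=1$). Hence $\tilde R$ is of lower order, and since $|e_i|=O(\epsilon)$ there, $\tilde a_i|e_i|=o(\tilde a_i)$. The same weighted sum then yields $(\delta')^2(\sum_i\tilde a_i)^2\le|\sum_i\tilde a_ic_i|^2=o\big((\sum_i\tilde a_i)^2\big)$, which is impossible for small $\epsilon$.

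This is the structure of the paper's proof. The paper first shows, independently of $\epsilon$, that $X_M\neq0$ on a neighbourhood $U_N$ of $N$ for all $X$ in an open cone $\U_N$ around the cone of leading weights; this uses $\sum_l t_l\langle c_{W_l},X\rangle=\|X\|^2$ for $X=\sum_l t_lc_{W_l}$. The truncation then only has to keep $\widetilde\mu_\epsilon$ inside $\U_N$.

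A minor point: $p_{j,\epsilon}$ cannot be even when $j$ is even, since $x^{-(j-1)}$ is odd there. You only need positivity and the lower bound for the leading term $j=n_i$, which is odd by Condition (3).
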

\begin{proof}[\textbf{Proof of Lemma \ref{lem:collarnozeros}}]

    Fix a stratum $N=W_{1}\cap\dots\cap W_{p}$ and denote by $c_{W_l}$, $n_l$ and $x_l$ the leading modular weight, the modular degree and the defining function of $W_l$. The singular
  part of $\mu$ in \eqref{eq:mumubar} is a sum of functions of
  \emph{separate} variables,
  \begin{equation}\label{eq:separated}
    \mu=\ol\mu+\sum_{l=1}^{p}f_l(x_l),\qquad
    f_l(x_l)=-\sum_{j=2}^{k}\frac{c_{j}}{(j-1)x_l^{\,j-1}}+c_{1}\ln|x_l|,
  \end{equation}
  so the leading and lower-order terms may be compared one variable at a
  time. Put $t_l:=\big((n_{l}-1)x_l^{\,n_{l}-1}\big)^{-1}$ if $n_{l}\geq3$ and
  $t_l:=-\ln|x_l|$ if $n_{l}=1$; by Condition (3) we have $t_l>0$, and $t_l\to+\infty$ as
  $x_l\to0$. Then $f_l=-t_lc_{W_l}+r_l$, where $r_l$ collects the terms of order
  $j<n_{l}$. 

  Let $C_N$ be the cone of Condition (2). Since the leading weights are non-zero
  and $C_N$ is strongly convex, $\|\sum_l a_lc_{W_l}\|$ is positive on the
  compact simplex $\{a_l\ge0,\ \sum_la_l=1\}$, so by homogeneity there is
  $\delta>0$ with
  \begin{equation}\label{eq:conebound}
    \Big\|\sum_l t_lc_{W_l}\Big\|\ \ge\ \delta\sum_l t_l
    \qquad\text{for all }t_l\ge0 .
  \end{equation}
  Combining \eqref{eq:separated}--\eqref{eq:conebound},
  \[\|\mu\|\ \ge\ \delta\sum_l t_l-\|\ol\mu\|_\infty-\sum_l|r_l|
    \ =\ \delta\sum_l t_l+\ \textup{lower-order terms},\]
  which tends to $\infty$ as any $x_l\to0$. As $M$ is compact, this proves
  that $\mu$ is proper.

      Now, \cite[Lemma 4.9]{lin2022log} applies unchanged: if
  $X=\sum_l t_l c_{W_l}\neq0$ then
  $u_l:=\iota(X_{Z_{l}})\res^1_{l,n_l}(\omega)
  =\sum_{l'}t_{l'}\langle c_{W_l},c_{W_{l'}}\rangle$
  satisfies $\sum_l t_l u_l=\|X\|^2>0$, so some $u_l\neq0$ and $X_M$ does not
  vanish on $N$. Since non-vanishing is open and invariant under positive
  scaling, we obtain a scale-invariant open
  $\U_N\supset\textup{span}\{c_{W_l}\}\setminus\{0\}$ and a
  neighbourhood $U_N$ of $N$ on which $X_M\neq0$ for all $X\in\U_N$.
 
  Finally, the truncation obeys the same bounds with $|x_l|$ replaced by
  $\epsilon$.
  Hence on $U_N$ and
  sufficiently near $N$ the map $\widetilde\mu_\epsilon$ is a perturbation,
  small relative to $\sum_l t_l$, of the non-negative combination
  $\sum_l t_lc_{W_l}$ of the leading weights, which by \eqref{eq:conebound} is
  non-zero of norm at least $\delta\sum_l t_l$. So $\widetilde\mu_\epsilon$
  takes values in $\U_N$ near $N$ and $\widetilde\kappa_\epsilon\neq0$ there.
  There are finitely many strata, so a single $\epsilon_0$ works. The last
  two assertions follow as in \cite[Corollary 4.11]{lin2022log}.
  \end{proof}

    \medskip
\noindent\emph{Step 3: non-abelian localization.}
With $\widetilde\kappa:=\widetilde\kappa_\epsilon$ smooth on $M$ and
vanishing exactly on $\C$, the deformed symbol
$\widetilde\sigma(\xi)=c(\xi-\widetilde\kappa)$ is homotopic to the symbol
of $\slashed D^L$ and is transversally elliptic. Atiyah's excision property
gives, in $R^{-\infty}(G)$,
\begin{equation}\label{eq:localization}
  \textup{index}_G(\slashed D^{L})
  =\sum_{\beta\in\B}\textup{index}_G\big([\widetilde\sigma_\beta]\big),
\end{equation}
where $\B\subset\t^*_+$ is the finite set of $\beta$ with
$M^\beta\cap\mu^{-1}(\beta)\neq\emptyset$. This is \cite[Section 4.2]{lin2022log} verbatim; nothing about the order of the poles
enters.

\medskip
\noindent\emph{Step 4: only $\beta=0$ contributes to the invariant part.}
Let $0\neq\beta\in\B$, let $M_\beta\subset M^\beta$ be the union of
components meeting $\mu^{-1}(\beta)$, and let $\nu$ be its normal bundle.
The criterion of \cite[Theorem 9.6]{paradan2019witten} reduces the
claim to positivity of the eigenvalue of $b=-\sqrt{-1}\beta$ on
\[\L'_\beta=\det(\nu,J_\beta)^{1,0}\otimes
  \big(\det(\g/\g_\beta,j_\beta)^{0,1}\big)^{2}\otimes
  \det(\nu,J)^{1,0}\otimes L^{2}\big|_{M_\beta}.\]
Two points needs to be made here. First, the
anti-canonical line bundle of the spinor module is
$\det_{\bC}(\bk T_ZM)$, since $S$ arises from a complex structure on
$\underline{\bR}^2\oplus TM\simeq\underline{\bR}^2\oplus\bk T_ZM$; and since
$G$ preserves each $Z_i$, every $Z_i$ is transverse to $M_\beta$, whence
$\bk T_ZM|_{M_\beta}\simeq\bk T_{Z\cap M_\beta}M_\beta\oplus\nu$ and the
factor on which $\beta$ acts trivially may be discarded. Second, by
Lemma~\ref{lem:collarnozeros} the set $\C_\beta$ lies outside the collar,
where $\widetilde\mu_\epsilon=\mu$ and $\omega$ is a genuine fibrewise
symplectic form on $\bk T_ZM$; so for $X\in\g/\g_\beta$ the computation
\[\omega_m\big(X_M,(\ad_\beta X)_M\big)
 =-\d_m\langle\mu,X\rangle\big((\ad_\beta X)_M\big)
 =-\langle\ad^2_\beta X,X\rangle=\|\ad_\beta X\|^2>0\]
goes through unchanged, and Kostant's formula gives eigenvalue
$2\pi\|\beta\|^2>0$ on $L_m$. The remaining bookkeeping of eigenvalue signs
is \cite[Lemma 4.17]{lin2022log} word for word. Hence taking $G$-invariants
in \eqref{eq:localization} leaves only the $\beta=0$ term.

\medskip
\noindent\emph{Step 5: the $\beta=0$ term is $Q(M_0)$.}
Since $G$ acts freely on $\mu^{-1}(0)$ we have
$TM|_{\mu^{-1}(0)}\simeq p^*TM_0\oplus\underline\g\oplus\underline{\g^*}$
with $p\colon\mu^{-1}(0)\to M_0$ the quotient map, and
$\underline\g\oplus\underline{\g^*}\simeq\underline{\g_\bC}$ carries its
canonical complex structure. Set
\[S_0':=\Hom_{\Cl(\g_\bC)}\big(\textstyle\bigwedge\underline{\g_\bC},\,
   S|_{\mu^{-1}(0)}\big)\big/G .\]
By \cite[Theorem 9.6]{paradan2019witten},
$\textup{index}_G([\widetilde\sigma_0])^{G}$ is the index of the Dirac
operator on $M_0$ acting on $S_0'\otimes L_0$. It remains to identify $S_0'$
with the spinor module $S_0$ determined by the $b^k$-symplectic structure
$(\bk T_{Z_0}M_0,\omega_0)$ and the induced orientation, and for this one
uses the equivariant normal form of
Lemma~\ref{lem:bknormalform} below. On the model neighborhood
$U\simeq\mu^{-1}(0)\times U_{\g^*}$ one has
$\bk T_{Z\cap U}U\simeq (p\circ\pr_1)^*\,\bk T_{Z_0}M_0
 \oplus\underline{\g_\bC}$, and the stated form of $\omega$ shows that
$(p\circ\pr_1)^*J_0\oplus J_{\g_\bC}$ is $\omega$-compatible. As the space of
compatible complex structures is contractible, $J$ may be taken of this form,
whence $S|_U\simeq (p\circ\pr_1)^*S_0\otimes\bigwedge\underline{\g_\bC}$ and
$S_0'\simeq S_0$.

\end{proof}

\medskip 
\begin{lemma}[$b^k$ equivariant normal form]\label{lem:bknormalform}
  Under the hypotheses of Theorem~\ref{thm:[Q,R]=0forbk-manifolds} there is a $G$-equivariant
  diffeomorphism $\varphi\colon\mu^{-1}(0)\times U_{\g^*}\to U$ onto a
  $G$-invariant neighbourhood $U$ of $\mu^{-1}(0)$ in $M$, where
  $U_{\g^*}\ni0$ is $G$-invariant, such that
  \begin{enumerate}
    \item $\varphi^{-1}(W\cap U)=\big(W\cap\mu^{-1}(0)\big)\times
      U_{\g^*}$ for every component $W\subset Z\backslash Z_{\neq0}$ compatible with the defining function of $W$ up to order $k-1$;
    \item $\varphi^*\omega=\pr_1^*p^*\omega_0
      +\d\langle\pr_2,\pr_1^*\theta\rangle$ for a connection $\theta$ on the
      principal bundle $p\colon\mu^{-1}(0)\to M_0$.
  \end{enumerate}
\end{lemma}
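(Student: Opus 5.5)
The plan is to run the classical proof of the Marle--Guillemin--Sternberg equivariant normal form, carried out inside the algebroid $\A=\bk T_ZM$ rather than $TM$. Near $\mu^{-1}(0)$ the moment map is smooth, since $\mu^{-1}(0)\cap Z_{\neq0}=\emptyset$; this was noted in the proof of Lemma~\ref{l:bk-symplecticreduction}, and Lemma~\ref{lem:collarnozeros} gives properness. So on a small $G$-invariant neighbourhood only the components $W\subset Z\backslash Z_{\neq0}$ are present, and they meet $\mu^{-1}(0)$ transversally, stratum by stratum.

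\textbf{Step 1: the diffeomorphism $\varphi$ and condition (1).} I choose a $G$-invariant metric on $M$ adapted to $Z$. Concretely, near each stratum $N=W_1\cap\cdots\cap W_p$ the metric should make the coordinates $x_1,\dots,x_p$ orthogonal to the remaining directions, and the defining functions should be preserved to order $k-1$. The cleanest way I see to get this is to pick, by averaging, $G$-invariant vector fields $V_\xi$, $\xi\in\g^*$. These should be sections of $\A$, should satisfy $d\mu(V_\xi)=\xi$ along $\mu^{-1}(0)$, and should satisfy $V_\xi\cdot x_i=0$ near $\mu^{-1}(0)$.

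Such fields exist. Transversality of $\mu^{-1}(0)$ to each stratum (shown in Lemma~\ref{l:bk-symplecticreduction}) lets us lift $\xi$ tangentially to all the $W$'s through a point. Requiring $V_\xi x_i\equiv0$ is compatible because $d\mu$ restricted to $\bigcap_i\ker dx_i$ is still surjective. Equivariance is arranged by demanding $g_*V_\xi=V_{\mathrm{Ad}^*_g\xi}$, via averaging.

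I then set $\varphi(m,\xi)$ to be the time-one flow of $V_\xi$ starting at $m$, possibly after correcting so that $\mu(\varphi(m,\xi))=\xi$ by the inverse function theorem. Since every $V_\xi$ preserves each $x_i$, we get $x_i\circ\varphi=x_i\circ\pr_1$ exactly. This gives (1) with the defining functions respected to all orders, and in particular to order $k-1$, which is what Remark~\ref{rem:jet-dependence} needs.

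\textbf{Step 2: the model form.} Fix a connection $\theta$ on $p$ and set $\omega_1:=\pr_1^*p^*\omega_0+d\langle\pr_2,\pr_1^*\theta\rangle$. By Lemma~\ref{l:bk-symplecticreduction} this is a closed $b^k$-form for $(Z\cap\mu^{-1}(0))\times U_{\g^*}$. It is $G$-invariant and has moment map $\pr_2$. On the zero section it is nondegenerate as a section of $\Lambda^2\A^*$: the computation is the linear one already performed in the proof of Lemma~\ref{l:bk-symplecticreduction}, since $\A|_{\mu^{-1}(0)}=\A^\mu\oplus\g^*$-directions and $\A^\mu/\g\simeq\A_0$. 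Shrinking $U_{\g^*}$ keeps it nondegenerate.

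Arrange in Step 1 that $\varphi^*\mu=\pr_2$. Then $\omega':=\varphi^*\omega$ and $\omega_1$ have the same moment map, and they agree on $T(\mu^{-1}(0))$, because both pull back to $p^*\omega_0$. At the zero section they can still differ on the $\g^*$-directions. I fix this by a linear $G$-equivariant change of the complement, as in Guillemin--Sternberg, so that $\omega'=\omega_1$ along $\mu^{-1}(0)$ as $b^k$-forms.

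\textbf{Step 3: equivariant Moser in $\bk\Omega$ (main obstacle).} Set $\omega_t=(1-t)\omega_1+t\omega'$. It is nondegenerate near the zero section, and the difference $\omega'-\omega_1$ is closed and vanishes on $\mu^{-1}(0)$. The delicate point is writing $\omega'-\omega_1=d_{\A}\beta$ with $\beta$ a $G$-invariant $b^k$-$1$-form vanishing on $\mu^{-1}(0)$ and with $\iota_{X_M}\beta=0$. The last condition is what keeps the moment map fixed along the flow.

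I would take the homotopy operator for the fibrewise radial retraction $(m,\xi)\mapsto(m,s\xi)$. Because this retraction preserves every $x_i$ (Step 1), it preserves the sheaf $\tau$, so its pullback and the homotopy formula act on $\bk\Omega_Z$. The same fact shows that the Moser vector field $v_t=-\omega_t^{-1}\beta$ is a section of $\A$, and hence tangent to each $W$ to order $k$. Its flow therefore preserves the $W$'s together with their $(k-1)$-jets of defining functions.

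Invariance of $\beta$ under the moment-map conditions, $\iota_{X_M}(\omega'-\omega_1)=0$, gives $\iota_{X_M}\beta=0$ after averaging and using Cartan's formula. The flow of $v_t$ is then $G$-equivariant and fixes $\mu^{-1}(0)$, so composing $\varphi$ with it yields (2) while keeping (1).
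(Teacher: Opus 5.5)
Your route differs from the paper's. The paper gives no argument of its own here. It combines Lemma~\ref{l:bk-symplecticreduction}, which provides $\A^\mu/\g\cong\bk T_{Z_0}M_0$ and $p^*\omega_0=\iota^*\omega$, with the Darboux--Moser--Weinstein theorem for Lie algebroids, \cite[Theorem 4.4.1]{lin2023symplectic}, applied to $\bk T_ZM$ near $\mu^{-1}(0)$. You instead reprove that theorem by hand in this case. You build an equivariant tubular map from flows of $x_i$-preserving fields, then run an equivariant relative Moser argument in $\bk\Omega_Z$ using the radial homotopy. The citation buys generality and handles the algebroid linearisation once and for all. Your argument is self-contained and shows why only compatibility ``up to order $k-1$'' can be claimed. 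Step 1 and the Moser mechanism of Step 3 are sound. One minor point: impose $V_\xi\cdot x_i=0$ only near $Z_i$, because on a whole neighbourhood of $\mu^{-1}(0)$ the functions $x_1,\dots,x_m$ and $\mu$ need not have independent differentials.

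The step that fails as written is the ``linear $G$-equivariant change of the complement'' in Step 2. The complement lives in $\A|_{\mu^{-1}(0)}$. With $\theta$ a smooth connection, $\omega_1(\ker\rho,\g^*)=0$ along the zero section, and this in general forces the corrected normal directions to have components in $\ker\rho$. No map with $x_i\circ\varphi=x_i\circ\pr_1$ produces such components, since for such $\varphi$ the vector $d\varphi(0,\eta)$ lies in the span of vectors annihilated by every $dx_i$.

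For example, near a point of $\mu^{-1}(0)\cap Z_1\cap Z_2$ take $\omega=\frac{dx_1}{x_1^k}\wedge\frac{dx_2}{x_2^k}+dy\wedge d\phi+b\,\frac{dx_1}{x_1^k}\wedge dy$ with $b\neq0$, $S^1$ rotating $\phi$, and $\mu=y$. Here the modular weights are zero, the action is free and $0$ is regular. The same linear algebra occurs compactly on $T^2\times S^2$, with $du_i/\sin^k u_i$ in place of $dx_i/x_i^k$. At $x_1=x_2=0$ your $d\varphi(0,\eta)$ lies in $\mathrm{span}\{\partial_\phi,\partial_y\}$ with $\partial_y$-component $\eta$. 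Hence $\omega(x_1^k\partial_{x_1},d\varphi(0,\eta))=b\eta\neq0$. So no $\varphi$ in your Step 1 class gives $\varphi^*\omega=\omega_1$ along $\mu^{-1}(0)$, and you do not say how else the correction is realised by a $b^k$-map.

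The repair is to drop the correction, which is not needed.
\begin{itemize}
\item Since $\varphi^*\mu=\pr_2$, you have $\iota_{X_M}\omega'=\iota_{X_M}\omega_1=-d\langle\pr_2,X\rangle$. Both forms restrict to $p^*\omega_0$ on $\A^\mu$, and the kernel of $p^*\omega_0$ on $\A^\mu_m$ is $\g_m$.
\item Suppose $h+f$, with $h\in\A^\mu$ and $f\in\g^*$, lies in the kernel of $\omega_t$ at a point of the zero section. Pairing with $X_M$ gives $f=0$, so $h\in\g_m$, and pairing with $\g^*$ then gives $h=0$. Thus every $\omega_t$ is nondegenerate near $\mu^{-1}(0)$.
\item The radial homotopy only needs the pullback of $\omega'-\omega_1$ to $\A^\mu$ to vanish in order to produce $\beta$ vanishing on $\mu^{-1}(0)$.
\item The Moser field is then a section of $\A$ vanishing on $\mu^{-1}(0)$. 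Its linearisation supplies exactly the missing $\ker\rho$ components; in the example it must move $x_2$ by $O(x_2^k)$. This is why the final map preserves only $(k-1)$-jets.
\end{itemize}
Alternatively, realise your correction by flows of sections such as $\langle\xi,h_i\rangle x_i^k\partial_{x_i}$, giving up exact preservation of the $x_i$. With either change your proof goes through.
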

 The Lemma above is the consequence of the of Lemma \ref{l:bk-symplecticreduction} and \cite[Theorem 4.4.1]{lin2023symplectic}.

\bibliographystyle{amsplain}

\providecommand{\bysame}{\leavevmode\hbox to3em{\hrulefill}\thinspace}
\providecommand{\MR}{\relax\ifhmode\unskip\space\fi MR }
\providecommand{\MRhref}[2]{%
  \href{http://www.ams.org/mathscinet-getitem?mr=#1}{#2}
}
\providecommand{\href}[2]{#2}


\bibliography{references}

\end{document}